\numberwithin{equation}{section}
\newtheorem{theorem}{Theorem}
\numberwithin{theorem}{section}
\newtheorem{proposition}[theorem]{Proposition}
\newtheorem{lemma}[theorem]{Lemma}
\newtheorem{corollary}[theorem]{Corollary}
{ \theoremstyle{remark}\newtheorem{remark}[theorem]{Remark} }
\newcommand{\diag}{\operatorname{diag}}
\newcommand{\tr}{\operatorname{Tr}}
\newcommand{\Res}{\operatorname{Res}}
\newcommand{\card}{\operatorname{card}}
\newcommand{\im}{\operatorname{Im}}
\newcommand{\Prob}{\operatorname{{Pr}}}
\newcommand{\E}{\operatorname{\mathbf{E}}}
\newcommand{\var}{\operatorname{Var}}
\newcommand{\Li}{\operatorname{Li}}
\newcommand{\sm}[1]{{\scriptscriptstyle#1}}
\begin{document}

\title[Article]
{Zeros of conditional Gaussian analytic functions, random sub-unitary matrices and q-series}

\author{Yan V. Fyodorov$^{1}$, Boris A. Khoruzhenko$^{2}$, and Thomas Prellberg$^{2}$}
\address{$^1$ King's College London, Department of Mathematics, London  WC2R 2LS, United Kingdom}
\address{$^2$ Queen Mary University of London, School of Mathematical Sciences, London E14NS, United Kingdom}

\begin{abstract}
%
%
We investigate radial statistics of zeros of hyperbolic Gaussian Analytic Functions (GAF) of the form $\varphi (z) = \sum_{k\ge 0} c_k z^k$ given that $|\varphi (0)|^2=t$ and assuming coefficients $c_k$ to be independent standard complex normals. We obtain the full conditional distribution of $N_q$, the number of zeros of $\varphi (z)$ within a disk of radius $\sqrt{q}$ centred at the origin, and prove its asymptotic normality in the limit when $q\to 1^{-}$, the limit that captures the entire zero set of $\varphi (z)$.  In the same limit we also develop precise estimates for conditional probabilities of moderate to large deviations from normality. Finally, we determine the asymptotic form of $P_k(t;q)=\Prob\{ N_q\!=\!k \, | \, |\varphi(0)|^2=t\}$  in the limit when $k$ is kept fixed whilst $q$ approaches 1. To leading order, the hole probability $P_0(t;q)$ does not depend on $t$ for $t\!>\!0$ but yet is different from that of $P_0(t\!=\!0;q)$ and coincides with the hole probability for unconditioned hyperbolic GAF of the form $\sum_{k\ge 0} \sqrt{k+1}\, c_k z^k$.  We also find that asymptotically as $q\!\to \!1^{-}$,  $P_k(t;q)\!= \!e^t P_{k}(0;q)$ for every fixed $k\!\ge\! 1$ with $P_{k}(0;q)\!=\!\Prob\{ N_q\!=\!k\!-\!1 \}$.

\end{abstract}

\maketitle

\section{Introduction}

\subsection{Gaussian power series}
Since the pioneering work of Paley and Wiener \cite{PW1934}, Kac \cite{Kac1959} and Rice \cite{Rice1945}, statistical properties of zeros of Gaussian Analytic Functions (GAF) have been studied in various contexts, see  \cite{Sodin05} for a brief overview. Among the many models of GAF, one model stands out because of its exact solvability. It is the Gaussian power series
\begin{align} \label{phi(z)}
\varphi (z)=\sum\nolimits_{k\ge 0}c_k z^k 
\end{align}
with independent standard complex normal coefficients $(c_k)_{k\ge 0}$. As was established by Peres and Virag \cite{PV04}, its zero set is a determinantal point process in  its natural domain of analyticity, the unit disc $\mathbb{D}=\{ z: |z|<1\}$, with the correlation kernel given by $K (z,w)=\pi^{-1} (1-z\overline{w})^{-2}$.
This property allowed one to study statistics of associated zeros at fine level of detail.

Although Peres and Virag discovered the determinantal structure of the correlations in the zero set of $\varphi(z)$ working directly with the power series, this feature is better elucidated by a link to random matrices \cite{K09}.  Let $U_n$ be a unitary matrix sampled uniformly from the unitary group $U(n)$ according to its Haar's measure and let $T_{n-1}$ be the sub-unitary matrix obtained from $U_n$ by removing its first row and first column. Then in the limit of large matrix dimension $n\to\infty$, the eigenvalues of $T_{n-1}$  converge, as a point process in $\mathbb{D}$, to the zero set of ${\varphi (z)}$.  The probability law of elements of the zero set (which we for brevity will simply call "zeros" henceforth) can then be inferred from that of the eigenvalues of $T_{n-1}$. One advantage of this approach is that the joint eigenvalue density of $T_{n-1}$ is known in closed form \cite{SZ00} from which one can easily obtain the determinantal kernel $K (z,w)$
and other statistical properties of the zeros.

Remarkably, the link to random matrices with ensuing integrability of the probability law of the zero set is preserved, albeit in a modified form, on freezing the free coefficient of ${\varphi (z)}$ and treating it as a parameter.
It was shown by Forrester and Ipsen \cite{FI2019} that
the eigenvalues of  the random matrices
\begin{align}\label{A}
A_{n,\tau}= U_n \diag \left( \sqrt{\tau}, 1, \ldots, 1\right), \quad U_n\in U(n),
\end{align}
with $U_n$ being drawn at random from the unitary group $U(n)$, converge, as a point process in the scaling limit
\begin{align}\label{sl}
n\to\infty,\,
 \tau n= t>0,
\end{align}
to the zero set $\mathcal{Z}$ of the Gaussian power series $\varphi (z)$ conditioned by the event that $|\varphi (0)|^2=t$. Note that because of the rotational invariance of the standard complex normal distribution, the conditional probability law of $\mathcal{Z}$ given  $|\varphi (0)|^2=t$ is the same as that of the zero set of
\begin{align}\label{g_a(z)}
\varphi_a (z) = a  +\sum\nolimits_{k\ge 1} c_k z^k\, .
\end{align}
for every complex $a$ such that  $|a|^2=t$.

The matrix ensemble $ A_{n,\tau}$ featuring in Eq.(\ref{A}) was introduced by one of us in \cite{F00} as a one-parameter extension
of the truncated unitary matrices studied by \.Zyczkowski and Sommers in \cite{SZ00}. It is the simplest example  of a more general ensemble of finite-rank multiplicative non-unitary deformations of $U_n$, see \cite{FS00, FS03}.
Similarly to $T_{n-1}$, the eigenvalue joint probability density of $A_{n,\tau}$  is known in closed form for every finite matrix dimension, although in this case the eigenvalue correlation functions are not determinantal \cite{F00}. By passing to the limit $n\to\infty$ in the expression for the eigenvalue correlation functions derived in \cite{F00}, Forrester and Ipsen \cite{FI2019} obtained the correlation functions of the zero set
$\mathcal{Z}_a=\{ z: z\in \varphi_a^{-1} (0) \} $.
It is worth mentioning that prior to Forrester's and Ipsen's work the zero set of $\varphi_t(z)$ was linked to a different but less tractable random matrix ensemble by Tao \cite{T2013}.
The essential difference to the unconditional Gaussian power series $\varphi(z)$  is that the process $\mathcal{Z}_a$  is not determinantal and its correlation functions have a complicated structure. Consequently,  the statistical properties of $\mathcal{Z}_a$ remain largely unexplored.

Our original interest in the zero set of \eqref{g_a(z)}  stemmed from the utility of the random matrix ensemble \eqref{A} for the analytic study of scattering in open chaotic system with one scattering channel \cite{F00}. We will further comment on this in Section \ref{subsection:1.3}. In the random matrix context, the zero set of \eqref{g_a(z)} models eigenvalue outliers of \eqref{A} in the scaling limit \eqref{sl}. It also models eigenvalue outliers of the circular law for i.i.d. matrices under small rank additive perturbations \cite{T2013} which otherwise are difficult to study analytically.

The aim of the present paper is to essentially advance the line of inquiry started by Forrester and Ipsen in \cite{FI2019}. We concentrate on characterising the point process $\mathcal{Q}_a$ of
 absolute squares of the zeros of  $\varphi_a(z)$ in the unit disc,
\begin{align}\label{Q_a}
\mathcal{Q}_a=\{ q: \,\, q=|z|^2, z\in \varphi_a^{-1} (0) \}.
\end{align}
This process is somewhat simpler than  the zero set of $\varphi_a(z)$ but still it encodes useful information about the latter. We  demonstrate that $\mathcal{Q}_a$ gives rise to non-trivial and seemingly new extreme value distributions.
The novelty can be traced to the eigenvalue product conservation law
\begin{align}\label{claw}
| \det A_{n, \tau}|^2= \tau
\end{align}
which induces correlations between absolute squares of zeros in the scaling limit \eqref{sl}. In contrast, the probability law of the set $\mathcal{Q}$ of absolute squares of  zeros for the unconditional power series  \eqref{phi(z)} is known to be the same as that of infinite sequence of independent samples $\beta_{1,k}$ from Beta$(1,k)$ distribution \cite{PV04},
\begin{align}\label{beta_1k}
\mathcal{Q} \overset{\mathrm{d}}{=} (\beta_{1,k})_{k\ge 1}.
\end{align}

Radial  statistics of complex eigenvalues in non-Hermitian random matrix ensembles including truncations of random unitary matrices, as well as in associated Coulomb plasma models, have attracted considerable interest over the years \cite{Shirai2006,akemann2009gap,Allez2014,akemann2014universal,cunden2016large,LACT2019,FL2022,BG-ZL2023,akemann2023universality,L2024,ACM2024}, see also Section 3 in \cite{BF-book2024} and references there.
Our work significantly extends the existing literature in two directions. Firstly,  we mainly focus on the distribution of spectral outliers in the lower end of the  spectrum of the eigenvalue moduli, whilst the main body of the existing literature focuses either on the spectral bulk or the upper end of the moduli spectrum.   Secondly, the point process $\mathcal{Q}_a$ cannot be represented by sequences of independent random variables. This should be contrasted with radial eigenvalue statistics of non-Hermitian random matrix models with matrix distributions invariant under the unitary conjugations, where the independence holds and frequently simplifies calculations considerably.  To illustrate this point,
the counting function $N_q$ of $\mathcal{Q}$ has moment generating function \cite{PV04}
\begin{align}\label{pgfNq}
\E_{\varphi}\! \left\{(1+x)^{N_q}\right\} = \prod\nolimits_{k\ge 1} (1+xq^k)
\end{align}
as a straightforward consequence of \eqref{beta_1k}. Such a product representation for the moment generating function comes in very handy as one can use the Euler–Maclaurin formula for asymptotic analysis in various limits of interest.
On the other hand, we will demonstrate that the conservation law \eqref{claw} leads, via the $q$-binomial theorem, to the moment generating function of the conditional distribution of $N_q$ in the form of a $q$-series
\begin{align}\label{mgf_cond}
\E_{\varphi}\! \left\{ \left. (1+x)^{N_q}  \right|  \varphi(0) \right\} = 1+ \sum_{k=1}^{\infty} \frac{x^k\, q^{\frac{k(k-1)}{2}} }{(1-q) \cdots (1-q^k)}\, e^{|\varphi(0)|^2(1-q^{-k})}  \, .
\end{align}
Note that on averaging this $q$-series over the distribution of $\varphi(0)$ one recovers, via Euler's partition identity \eqref{euler_p},
the product representation for the unconditional moment generating function:
\begin{align*}
\E_{\varphi} \!\left\{(1+x)^{N_q}\right\}= \E_{\varphi}  \left\{\E_{\varphi}  \!\left\{ \left. (1+x)^{N_q} \, \right| \,  \varphi (0) \right\}\right\}.
\end{align*}
However, for a fixed value of $\varphi(0)$ the $q$-series in \eqref{mgf_cond} does not fold into a product.

There is a significant body of literature on asymptotics of $q$-series going back to Ramanujan  \cite{RIV} but the series in \eqref{mgf_cond} appears to be new. In our work we develop a method for obtaining its asymptotic expansions in various limits of interest which may be of independent interest. The method is based on an integral representation for the $q$-series in \eqref{mgf_cond} and has proved to be fairly efficient. For example we obtain the large deviation form for the probability (conditional and not) for $N_q$ to be equal $k$
in the scaling limit $k\to\infty$ and $q=e^{-s/k}$ with a greater precision than one known for the Ginibre ensemble \cite{Shirai2006,LACT2019}.

\subsection{Main results} Throughout the paper we use the notation $(z;q)_k$ for the $q$-shifted factorial,
\begin{align*}
(z;q)_k=
\begin{cases}
(1-z)(1-zq) \cdots (1-zq^{k-1}), &  k=1,2,\ldots, \\
1 & k=0.
\end{cases}
\end{align*}
Denote by $\mathcal{Q}$ the point process of absolute squares of the zeros of
$\varphi (z)=\sum_{k\ge 0}c_kz^k$, and let $N_q$ be its counting function:
\begin{align}\label{N_q}
\mathcal{Q} =\{ |z|^2: z\in {\varphi}^{-1} (0) \}, \quad N_q= \card\left( \mathcal{Q} \cap [0,q] \right).
\end{align}
It is convenient to think of $\mathcal{Q}_a$ as of the point process $\mathcal{Q}$ conditioned by the event $\varphi(0)=a$.
Our first observation is that its probability generating functional can be computed in terms of a series.

\begin{theorem} \label{Thm:M} Assume that the test function $h(q)$ has bounded variation on $[0,1]$ and is such that
\begin{align}\label{cond}
h(1)=0 \quad \text{and}\,\, \int_0^1 \frac{dV_{h}(q)}{1-q} < \infty,
\end{align}
where $V_{h}(q)$ is the full variation of $h(q)$ on the interval $[0,q]$. Then,
\begin{align} \nonumber
\E_{\varphi} \left\{ \left. \prod\nolimits_{q\in \mathcal{Q}} \!\left(1+h (q)\right)   \right|  \varphi (0) \right\}
& = \\ \label{limPGFmr1}
  \MoveEqLeft[10]
1+ \sum_{k=1}^{\infty} (-1)^k \int_0^1 \ldots \int_0^1 \frac{e^{|{\varphi (0)}|^2\left(1-\frac{1}{q_1 \cdots q_k}\right)}}{q_1 \cdots q_k}\, \prod_{j=1}^k \frac{q_1 \cdots q_j}{1- q_1 \cdots q_j} \, dh(q_1) \ldots dh(q_k) \, .
\end{align}
\end{theorem}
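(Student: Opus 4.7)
My strategy is to compute the conditional PGF at finite $n$ for the random matrix ensemble $A_{n,\tau}$ from \eqref{A}, whose eigenvalue moduli squared converge as a point process to $\mathcal{Q}_a$ in the scaling limit $n\to\infty$, $n\tau\to t=|\varphi(0)|^2$ (Forrester--Ipsen), and then pass to the limit. Rotational symmetry in each eigenvalue (preserved by the multiplicative constraint $|\det A_{n,\tau}|^2=\tau$) together with the permanent identity for $\prod_{i<j}|z_i-z_j|^2$ gives, after symmetrisation under $S_n$, the joint density of the moduli squared $q_j=|z_j|^2$ as
\begin{align*}
p_n(q_1,\ldots,q_n)\;\propto\;\delta\!\Bigl(\tau-\prod_{j=1}^{n}q_j\Bigr)\,\prod_{j=1}^{n}q_j^{\,j-1}(1-q_j)^{n-2},\qquad q_j\in[0,1],
\end{align*}
the $\delta$-factor encoding the conservation law \eqref{claw}.

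I would handle the $\delta$-constraint by invoking its Mellin representation $\delta(\tau-x)=\frac{1}{2\pi i}\int_{c-i\infty}^{c+i\infty}\tau^{-s-1}x^{s}\,ds$, which factorises the $n$-fold integration and converts the finite-$n$ PGF into a ratio of contour integrals whose inner integrals are
\begin{align*}
M_{j,n}(s)\;=\;\int_{0}^{1}q^{j+s-1}(1-q)^{n-2}\bigl(1+h(q)\bigr)\,dq.
\end{align*}
Integration by parts inside each $M_{j,n}(s)$, using the boundary condition $h(1)=0$ together with the vanishing of the incomplete Beta function $B_{q}(j+s,n-1)$ at $q=0$, yields the Stieltjes form $M_{j,n}(s)=B(j+s,n-1)-\int_{0}^{1}B_{q}(j+s,n-1)\,dh(q)$. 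Expanding $\prod_j M_{j,n}(s)$ in powers of the Stieltjes integral then organises the PGF into a sum over $k$-subsets $\mathcal{K}=\{j_1<\cdots<j_k\}\subseteq\{1,\ldots,n\}$, each term carrying the sign $(-1)^k$ and the product $\prod_{l=1}^{k}\int_0^1 B_{q_l}(j_l+s,n-1)\,dh(q_l)$, which will become the $k$-sum in \eqref{limPGFmr1}.

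The scaling limit is then taken inside the Mellin contour by asymptotic analysis of the Gamma-function products. After cancellation between numerator and denominator, the Stirling-type asymptotics of $B(j+s,n-1)$ and of the normalised incomplete Beta $B_{q}(j+s,n-1)/B(j+s,n-1)$ (the latter exhibiting a sharp transition at $j/n=q/(1-q)$) freeze the sums over the ordered indices $j_l\notin\mathcal{K}$ into telescoping geometric series $\sum_{m\geq 0}(q_1\cdots q_l)^{\,m}=(1-q_1\cdots q_l)^{-1}$, which generate the nested rational factor $\prod_{j=1}^{k}(q_1\cdots q_j)/(1-q_1\cdots q_j)$. The residue at the pole of the surviving $\Gamma$-factor then produces the series $\sum_{m\geq 0}(-t/(q_1\cdots q_k))^m/m!=e^{-t/(q_1\cdots q_k)}$, which combined with the overall $e^{t}$ from the denominator residue gives the exponential factor $e^{t(1-1/(q_1\cdots q_k))}$. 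The resulting $k$-th term of the series is then recognised as the $k$-th iterated integral in \eqref{limPGFmr1}.

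The principal technical obstacle is twofold: (i) the joint justification of interchanging the $n\to\infty$ limit with the Mellin contour integration and with the $k$-summation, which requires uniform vertical-line bounds on the ratios of Gamma-function products entering the Laplace-type asymptotics; and (ii) the absolute convergence of the resulting iterated Stieltjes integrals, which is precisely where the hypothesis \eqref{cond} enters. Indeed, on the domain of integration each factor $1/(1-q_1\cdots q_j)$ is dominated by $1/(1-q_j)$, so $\int_{0}^{1}dV_{h}(q)/(1-q)<\infty$ together with $h(1)=0$ ensures that every term of the series, and the series itself, converges absolutely.
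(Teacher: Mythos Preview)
Your overall strategy---compute the probability generating functional for the eigenvalue moduli squared of $A_{n,\tau}$, handle the $\delta$-constraint by a Mellin transform, pass to the scaling limit, and invoke Forrester--Ipsen---is exactly the route the paper takes. But there is a genuine error in your starting point that sends the rest of the argument down an unnecessarily complicated path.

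The joint eigenvalue density of $A_{n,\tau}$ is, up to normalisation,
\[
p_n(z_1,\ldots,z_n)\;\propto\;\delta\Big(\tau-\prod_{k=1}^n|z_k|^2\Big)\,\prod_{i<j}|z_i-z_j|^2\,\prod_k\theta(1-|z_k|),
\]
with \emph{no} weight factor $(1-|z_k|^2)^{n-2}$. Your claimed radial density $\prod_j q_j^{j-1}(1-q_j)^{n-2}$ therefore carries a spurious $(1-q_j)^{n-2}$; you appear to be conflating $A_{n,\tau}$ with a truncated unitary ensemble of a different rank. This error is what forces incomplete Beta functions $B_q(j+s,n-1)$ into the picture and, in turn, the ``sharp transition at $j/n=q/(1-q)$'' argument---none of which is actually present in the problem.

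With the correct density the computation is much cleaner. The paper applies the Andr\'eief identity directly to the Mellin transform of $(1-\tau)^{n-1}G_{n,\tau}[h]$ and obtains the \emph{closed product}
\[
\mathcal{M}[f_n(\tau);s]\;=\;B(n,s)\prod_{k=1}^n\int_0^1(1+h(q))\,q^{k+s-2}\,dq
\;=\;B(n,s)\prod_{k=1}^n\Big(1-\int_0^1 q^{k+s-1}\,dh(q)\Big),
\]
the second equality being the single integration by parts using $h(1)=0$. No incomplete Beta functions arise; each factor is already an elementary moment. One then passes to the limit in the product (this is where $\int dV_h/(1-q)<\infty$ is used), expands the \emph{limiting} infinite product as an alternating sum over $k$-subsets, resums the geometric series in the exponents to get the factors $(q_1\cdots q_j)/(1-q_1\cdots q_j)$, and inverts the Mellin transform termwise via $\mathcal{M}[e^{-t/q^k};s]=q^{ks}\Gamma(s)$. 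Your sketch of how the nested rational factors and the exponential $e^{t(1-1/(q_1\cdots q_k))}$ emerge is correct in spirit, but these identities follow immediately from the product form above rather than from any large-$n$ Laplace analysis of Beta integrals. If you drop the $(1-q_j)^{n-2}$ factor and rerun your outline, it collapses to the paper's proof.
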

The series representation in \eqref{limPGFmr1} leads to the conditional distribution of  $N_q$.
\begin{theorem}
\label{Thm2}
\begin{align}\label{fm8i}
\E_{\varphi}\! \left\{\left. (1+x)^{N_q} \right|  \varphi (0)\,  \right\} = \sum_{m=0}^{\infty} \frac{x^m \, q^{\binom m 2}}{(q;q)_m}  \, e^{|{\varphi (0)}|^2 ( 1- {q^{-m}})} \, \, .
\end{align}
In particular,
the conditional probability mass function of $N_q$ is given by
\begin{align}\label{pmfN_q}
\Prob \{  N_q\!=\!k  \big| \varphi (0)\!=\!a \} = \sum_{m=k}^{\infty} (-1)^{m-k} \binom  m k \frac{ q^{\binom m 2} }{(q;q)_m}\, e^{|a|^2( 1- {q^{-m}})} \, .
\end{align}
\end{theorem}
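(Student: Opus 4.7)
The plan is to obtain \eqref{fm8i} by specialising Theorem~\ref{Thm:M} to the step function $h(q') = x\,\mathbf{1}_{[0,q]}(q')$, for which $\prod_{q' \in \mathcal{Q}}\bigl(1 + h(q')\bigr) = (1+x)^{N_q}$. This $h$ has bounded variation, satisfies $h(1) = 0$ since $q < 1$, and its Stieltjes measure decomposes as $dh = x\,\delta_{0} - x\,\delta_{q}$, so the integrability condition \eqref{cond} is immediate. Substituting this measure into the $k$-fold integral in \eqref{limPGFmr1} collapses it to a finite sum of $2^{k}$ evaluations, one for each assignment $(q_{1},\ldots,q_{k}) \in \{0,q\}^{k}$.

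My claim is that, under the standing assumption $|\varphi(0)|^{2} = t > 0$, only the single evaluation $q_{1} = \cdots = q_{k} = q$ contributes. Indeed, whenever some $q_{i} = 0$, the exponential factor $e^{t(1 - 1/(q_{1} \cdots q_{k}))}$ has argument $-\infty$, while the singular algebraic factor $(q_{1} \cdots q_{k})^{-1}$ is only power-like. To turn this observation into a rigorous argument I would first apply Theorem~\ref{Thm:M} to the regularised test function $h_{\varepsilon}(q') = x\,\mathbf{1}_{[\varepsilon,q]}(q')$, whose Stieltjes measure $dh_{\varepsilon} = x\,\delta_{\varepsilon} - x\,\delta_{q}$ is supported away from the origin, and then pass to $\varepsilon \to 0^{+}$. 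Dominated convergence on the left-hand side follows from $h_{\varepsilon} \to h$ pointwise together with the almost-sure finiteness of $N_{q}$, while on the right-hand side every term containing at least one factor $dh_{\varepsilon}$ at $q_{i} = \varepsilon$ is bounded by a fixed polynomial in $1/\varepsilon$ times $e^{-t/\varepsilon^{k}}$ and hence vanishes in the limit. This localisation is the main technical step of the argument.

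The unique surviving contribution, obtained by putting $q_{1} = \cdots = q_{k} = q$, combines the signs and algebraic factors into
\begin{equation*}
(-1)^{k}(-x)^{k}\,q^{-k}\,\prod_{j=1}^{k}\frac{q^{j}}{1-q^{j}}\,e^{t(1-q^{-k})} \;=\; \frac{x^{k}\,q^{\binom{k}{2}}}{(q;q)_{k}}\,e^{t(1-q^{-k})},
\end{equation*}
which is precisely the $m = k$ summand of the $q$-series in \eqref{fm8i}; together with the leading $1$ from \eqref{limPGFmr1} (matching $m = 0$), this establishes the first assertion.

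To deduce \eqref{pmfN_q} I would invert the probability generating function by setting $z = 1 + x$ in \eqref{fm8i}, which gives $\E_{\varphi}\{z^{N_{q}} \mid \varphi(0)\} = \sum_{m \ge 0}(z-1)^{m}\,q^{\binom{m}{2}}\,(q;q)_{m}^{-1}\,e^{t(1-q^{-m})}$. Expanding $(z-1)^{m} = \sum_{k=0}^{m}\binom{m}{k}(-1)^{m-k}z^{k}$ by the binomial theorem and extracting the coefficient of $z^{k}$ yields the announced double sum. The interchange of summation orders is justified by the rapid decay of $q^{\binom{m}{2}}/(q;q)_{m}$ against the polynomial growth $\binom{m}{k}$ for each fixed $k$, which gives absolute convergence.
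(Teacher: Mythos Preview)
Your approach is essentially the same as the paper's: specialise Theorem~\ref{Thm:M} to the step function $h = x\,\mathbf{1}_{[0,q]}$ and read off the $q$-series. The derivation of \eqref{pmfN_q} by binomial expansion of $(z-1)^m$ is also exactly what the paper does.

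There is, however, one unnecessary detour. Your decomposition $dh = x\,\delta_0 - x\,\delta_q$ is not correct under the convention the paper uses for Stieltjes integrals on $[0,1]$: since $h$ is constant equal to $x$ on $[0,q]$ and then drops to $0$, the only jump \emph{inside} $[0,1]$ is at $q'=q$, so that $\int_0^1 f(q')\,d\mathbf{1}_{[0,q]}(q') = -f(q)$ (the paper states this explicitly in the proof of Corollary~\ref{prop:9}). There is no point mass at the origin, and hence no need for the regularisation $h_\varepsilon = x\,\mathbf{1}_{[\varepsilon,q]}$ or the limiting argument showing that the $\varepsilon$-contributions vanish. With the correct measure $dh = -x\,\delta_q$, every $k$-fold integral in \eqref{limPGFmr1} collapses to the single evaluation $q_1=\cdots=q_k=q$ immediately, and you arrive at \eqref{fm8i} in one line. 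Your regularisation argument is internally consistent and reaches the right answer, but it is solving a problem that is not there.
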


Theorems \ref{Thm:M}  and \ref{Thm2} are proved in Section \ref{Sec2.1}.

\begin{remark}\label{rem3}
The case of $\varphi(0)=0$ is special: the series ${\varphi (z)}=\sum_{k\ge 0} c_k z^k $
conditioned on the event that $\varphi (0)=0$ has non-random zero at the origin and the rest of its zeros have the same probability law as the zeros of the unconditional series.
Therefore the conditional distribution of $N_q$ given ${\varphi (0)}=0$ is the same as the distribution of $N_q+1$. Consequently,
\begin{align}\label{c_0=0}
\E_{\varphi}\! \big\{ \!(1+x)^{N_q} \big| {\varphi (0)}=0  \big\} = (1+x) \E_{\varphi} \!\big\{\!(1+x)^{N_q}\big\} = \scaleobj{.9}{\prod\nolimits_{k\ge 0}} (1+xq^k),
\end{align}
which, by Euler's partition identity \eqref{euler_p}, is equivalent to \eqref{fm8i} with $\varphi(0)=0$.
By a similar argument,
\begin{align}\nonumber
\E_{\varphi}\! \Big\{ {\prod\nolimits_{q\in \mathcal{Q}}} (1\!+\!h(q)) \big| {\varphi (0)}\!=\!0 \Big\}
& = \\ \label{PGF_t_0}
  \MoveEqLeft[5]
 (1\!+\! h(0)) \E_{\varphi} \Big\{{\prod\nolimits_{q\in \mathcal{Q}}} (1\!+\!h(q))\Big\} =
{\prod\nolimits_{k\ge 0}} \Big(1\!-\! \scaleobj{.9}{\int_0^1} q^{k} dh(q)\Big).
\end{align}
It can be verified, see proof of Theorem \ref{thm:7},  that under the assumptions of Theorem \ref{Thm:M} on $h$
 \begin{align*}
{\prod_{k=0}^{\infty}} \Big(1\!-\! \scaleobj{.8}{\int_0^1} q^k dh(q)\Big)
=
1+ {\sum_{k=1}^{\infty}} (-1)^k \scaleobj{.9}{\int_0^1 \ldots \int_0^1} \frac{1
}{q_1 \cdots q_k}\, {\prod_{j=1}^k} \frac{q_1 \cdots q_j}{1- q_1 \cdots q_j} \, dh(q_1) \ldots dh(q_k),
\end{align*}
and, hence, \eqref{PGF_t_0}  is equivalent to  \eqref{limPGFmr1} with $\varphi(0)=0$.
\end{remark}

\begin{remark} From \eqref{pmfN_q} we find that if $\varphi (0)=a$ then the probability for $\varphi (z)$ to have
no zeros in the disk $\mathbb{D}_r=\{z\in \mathbb{C}: \, |z| \le r \}$
equals
\begin{align}\label{P_0_t}
1+\sum_{k=1}^{\infty}  \frac{(-1)^{k}\, r^{k(k-1)} \, }{(1-r^2)(1-r^4) \cdots (1-r^{2k})} \exp\left[|a|^2\left(1-\frac{1}{r^{2k}}\right)\right], \quad 0<r<1.
\end{align}
It is evident  that  this probability is extremely small for every fixed $a\not= 0$ and decays at the rate of an essential singularity as $r\to 0$. This is markedly different to the unconditional probability for $\varphi (z)$ to have no zeros in the disk $\mathbb{D}_r$ which decays quadratically at $r= 0$.
As a consistency check, on averaging the expression in \eqref{P_0_t} over the standard complex normal distribution of $a$, one recovers the known expression $\prod\nolimits_{k\ge 1} (1-r^{2k})$ for the unconditional probability \cite{PV04}.
\end{remark}

\begin{remark}
Let $q_{{(1)}}$ be the smallest point of $\mathcal{Q}$, i.e., the smallest absolute square of the zeros of ${\varphi (z)}$. Obviously, $\Prob \{ q_{{(1)}} \!> \!q  | {\varphi (0)}\!=\!a \}=
\Prob \{ N_q \!=\! 0 | {\varphi (0)}\!=\!a \}$, hence, by differentiating \eqref{pmfN_q} in variable $q$, we find that the conditional probability density of $q_{{(1)}}$ given ${\varphi (0)}=a$ equals
\begin{align}\label{eq:24}
\sum_{k=1}^{\infty} (-1)^{k-1}\!
\left[k \left(\frac{|a|^2}{q^{k}} -1 \right) + \sum_{i=1}^k \frac{i}{1-q^{i}}\right] \frac{q^{\binom k 2 -1}  }{(q;q)_k} \, e^{|a|^2( 1- q^{-k})}.
\end{align}
Fig. \ref{Fig:1} displays this density for several values of $|a|^2$. From \eqref{pmfN_q} one also finds the conditional probability density of the $m$-th smallest point of $\mathcal{Q}$, see Remark \ref{rem:2.5}. The conditional \emph{joint} distribution of the extreme points of $\mathcal{Q}$ is determined by  \eqref{limPGFmr1}. We do not have a closed form expression for it.
\end{remark}

\begin{remark}
Equation \eqref{eq:24} defines a family of extreme value  distributions depending on the parameter $|a|^2$.
Evidently, it cannot be transformed to one of the three canonical forms, known as the Gumbel, Fr\'echet and Weibull types, characterizing extremes in  sequences of i.i.d. real random variables.
In fact, our next theorem asserts that this family interpolates between particular cases of two canonical forms, one of the Fr\'echet family and Gumbel.
\end{remark}

\begin{figure}
\includegraphics[width=.6\linewidth]{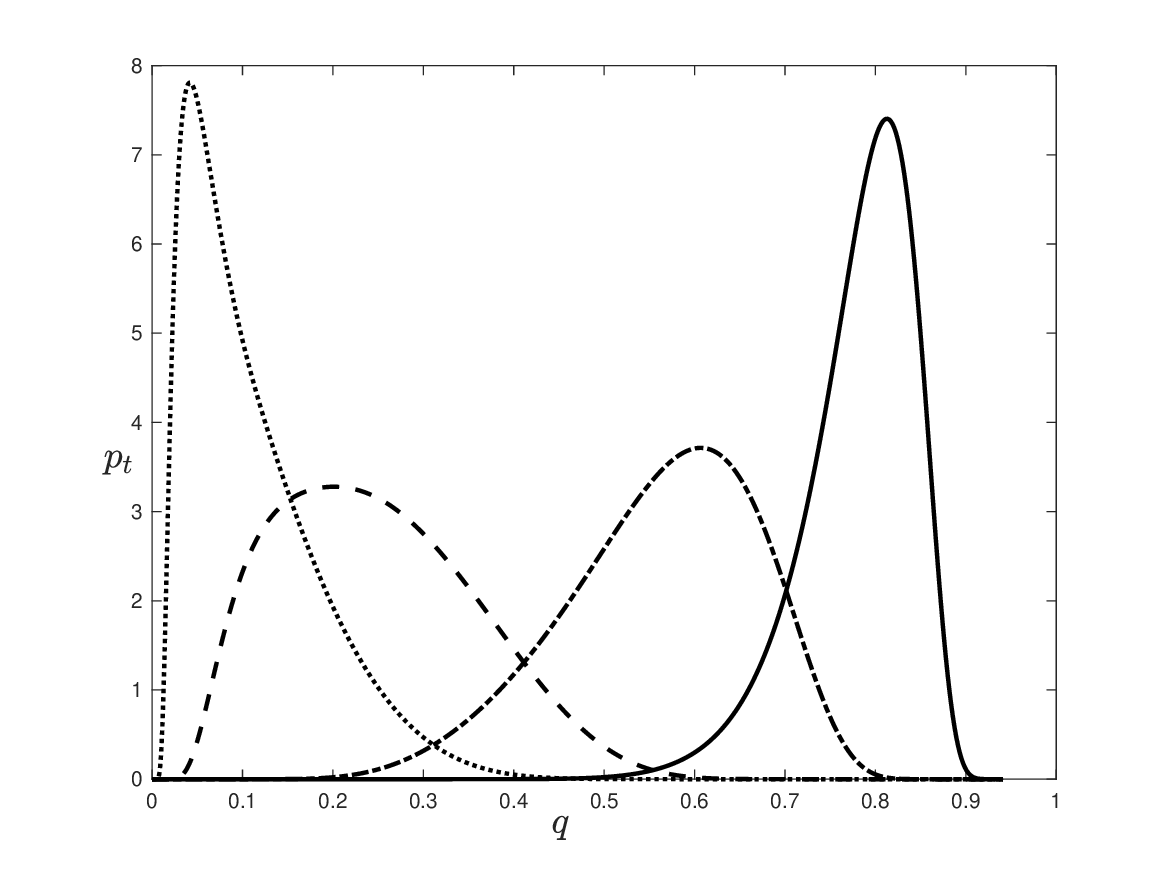}\\
\caption{The conditional probability density \eqref{eq:24} of the smallest absolute square of the zeros of $\varphi (z)$ \eqref{g_a(z)} given $\varphi (0)=a$ when $|a|^2=0.08$ (dotted line), $|a|^2=0.3$ (dashed line), $|a|^2=2$ (dotdashed line) and $|a|^2=8$ (solid line). }
\label{Fig:1}
\end{figure}

 \begin{theorem} \label{Thm:FG}
 \phantom{a}
 \begin{itemize}
  \item[(a)] The conditional distribution of $N_q$ given $\varphi (0)=a$, converges, in the scaling limit
 \begin{align}\label{sl_p1}
 a \to 0, \quad  q=|a|^2 u,
 \end{align}
  to Bernoulli ($e^{-1/u}$) for every $u>0$, which is a characteristic of the Fr\'echet family.
 \item[(b)] The conditional distribution of $N_q$ given $\varphi (0)=a$ converges, in the scaling limit
 \begin{align}\label{sl_p}
 a \to \infty, \quad \quad q=1- \frac{\log (|a|^2) - \log\log (|a|^2) + v}{|a|^2 },
\end{align}
to Poisson ($e^{-v}$) 
for every $v\in \mathbb{R}$, which is a characteristic of the Gumbel family.
 \end{itemize}

 \end{theorem}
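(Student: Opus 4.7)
The plan is to prove both parts by passing to the limit in the moment generating function provided by Theorem \ref{Thm2},
\begin{align*}
M(x;t,q) := \E_{\varphi}\{(1+x)^{N_q}\mid \varphi(0)=a\}
=\sum_{m=0}^\infty \frac{x^m\, q^{\binom m 2}}{(q;q)_m}\, e^{t(1-q^{-m})}, \quad t=|a|^2,
\end{align*}
and showing it converges to the MGF of the target distribution in each scaling regime. Since $N_q$ is integer-valued, pointwise convergence of $M(x;t,q)$ in any real neighbourhood of $x=0$ yields convergence of $N_q$ in distribution through probability generating functions.

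For part (a), substitute $q=tu$ and let $t\to 0^+$. The $m=0$ term is identically $1$; the $m=1$ term $\frac{x}{1-tu}e^{t-1/u}$ tends to $xe^{-1/u}$; and for every $m\ge 2$ the factor $e^{t(1-q^{-m})}=\exp(t - t^{-(m-1)}u^{-m})$ vanishes super-exponentially. A crude tail estimate based on $(q;q)_m\ge(q;q)_\infty\to 1$ as $q\to 0$ and $q^{\binom m 2}\le q^{m-1}$ shows $\sum_{m\ge 2}|\text{term}_m|=O(e^{-1/(tu^2)})$, which is negligible. Hence $M\to 1+xe^{-1/u}$, the MGF of Bernoulli$(e^{-1/u})$.

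For part (b), substitute $q=1-\epsilon/t$ with $\epsilon=\log t-\log\log t + v$ and let $t\to\infty$. For each fixed $m$, $q^{\binom m 2}\to 1$; the product $(q;q)_m=\prod_{j=1}^m(1-q^j)\sim m!\,(\epsilon/t)^m$ since $1-q^j\sim j\epsilon/t$; and $t(1-q^{-m})\to-m\epsilon$ by Taylor expansion of $q^{-m}=1+m\epsilon/t+O((\epsilon/t)^2)$. Because $e^{-\epsilon}=e^{-v}\log t/t$ and $\log t/\epsilon\to 1$, the $m$-th term converges to $(xe^{-v})^m/m!$, and summing formally gives the Poisson$(e^{-v})$ MGF $e^{xe^{-v}}$.

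The main obstacle is justifying the interchange of limit and summation in part (b), since the limiting distribution has unbounded support. I plan to construct a $t$-uniform summable dominant as follows. Bernoulli's inequality $q^{-m}\ge 1+m(1-q)/q$ yields $e^{t(1-q^{-m})}\le e^{-m\epsilon/q}\le e^{-m\epsilon}$, while the elementary estimate $1-e^{-x}\ge xe^{-x}$ applied with $x=j\delta$, $\delta=-\log q$, gives $1-q^j\ge j\delta q^j$ and hence $(q;q)_m\ge m!\,\delta^m q^{m(m+1)/2}$. Combining these with $\delta\ge\epsilon/t$ and $te^{-\epsilon}/\epsilon\to e^{-v}$ produces
\begin{align*}
\left|\frac{x^m q^{\binom m 2}}{(q;q)_m}\,e^{t(1-q^{-m})}\right|
\le \frac{1}{m!}\left(\frac{|x|e^{-\epsilon}}{\delta\, q}\right)^m
\le \frac{(C|x|e^{-v})^m}{m!}
\end{align*}
for some absolute constant $C$ and all sufficiently large $t$, uniformly in $m$. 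Dominated convergence then legitimises the term-by-term limit, completing the argument.
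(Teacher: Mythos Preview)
Your proof is correct. Part (a) is essentially identical to the paper's argument: both substitute $q=|a|^2u$ into the series \eqref{fm8i}, keep the $m=0,1$ terms, and kill the tail via the super-exponential decay of $e^{t(1-q^{-m})}$ for $m\ge 2$.

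For part (b) you take a genuinely different route. The paper does not sum the moment generating function; instead it reads off the $k$-th conditional factorial moment
\[
\E_{\varphi}\big\{N_q(N_q-1)\cdots(N_q-k+1)\,\big|\,\varphi(0)=a\big\}
=\frac{k!\,q^{\binom k2}}{(q;q)_k}\,e^{|a|^2(1-q^{-k})}
\]
(this is exactly $k!$ times your $m=k$ term), verifies that it tends to $e^{-kv}$ for each fixed $k$, and concludes by the method of moments, Poisson being moment-determined. Your approach instead establishes convergence of the full generating function by dominated convergence, using the clean bound $(q;q)_m\ge m!\,\delta^m q^{m(m+1)/2}$ together with $e^{t(1-q^{-m})}\le e^{-m\epsilon}$. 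The paper's version is shorter because it outsources the uniformity-in-$m$ issue to the classical moment method; yours is more self-contained and makes that uniformity explicit. Both are perfectly valid.

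One cosmetic point: the statement ``$t(1-q^{-m})\to -m\epsilon$'' is not literally a limit, since $\epsilon\to\infty$; what you mean and correctly use downstream is $t(1-q^{-m})=-m\epsilon+o(1)$, which then combines with $(q;q)_m\sim m!(\epsilon/t)^m$ to give the finite ratio $(te^{-\epsilon}/\epsilon)^m\to e^{-mv}$.
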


 We prove this theorem in Section \ref{Sec:3}.

\begin{remark}
Part (b) of Theorem \ref{Thm:FG} suggests that $\mathcal{Q}_{a}$ \eqref{Q_a} converges to Poisson process in the scaling limit \eqref{sl_p}. This is indeed so and we prove it in Section \ref{Sec:3}.
 \end{remark}

The rest of this section is concerned with the conditional distribution of $N_q$ in the limit when $q$ approaches 1, the limit that captures the entire zero set of $\varphi (z)$.

\smallskip

From \eqref{fm8i} we find that the conditional expected value of $N_q$ grows and becomes unbounded in this limit, whilst the \emph{relative} magnitude of fluctuations in values of $N_q$ vanishes:
\begin{align}\label{muqa}
\mu_q (a):= &\E_{\varphi}\! \left\{ N_q |  \varphi (0)\!=\!a \right\}
= \frac{e^{| a|^2( 1- {q^{-1}})} }{(1-q)}
= \frac{1}{1-q}+| a|^2
+O(1-q)\\
\label{sigmaqa}
\sigma^2_q( a):= &\var\! \left\{N_q |  \varphi (0)\!=\!a \right\}
= \frac{1}{2 (1-q)}+\frac{1}{4}+| a|^2+O(1-q)\, .
\end{align}
This asymptotic behaviour is very similar to that of the unconditional expected value and variance:
\begin{align*}
\E_{\varphi} \{N_q \} = \frac{q}{1-q}=\frac{1}{1-q} -1, \quad \var \{N_q\}= \frac{q}{1-q^2} = \frac{1}{2 (1-q)}+\frac{1}{4}+O(1-q).
\end{align*}
Therefore conditioned or not, the typical values of $N_q$ in the limit when $q$ approaches 1 concentrate about the value $(1-q)^{-1}$. In fact,

 \begin{theorem}
 \label{thm:1.9} For every complex $a$ it holds in the limit $q\to 1^{-}$ that
 \\[-1ex]
\begin{align*}
\Prob \Big\{ \Big| N_q\!-\!\frac{1}{1\!-\!q} \Big| \ge \frac{\xi}{(1\!-\!q)^{\alpha}} \Big| \varphi(0)\!=\!a \Big\}
\!=\!
\begin{cases}
\displaystyle{
\frac{1}{\sqrt{\pi}}\int_{|u|\ge \xi} e^{-u^2} du + o(1)
}\, , & \text{if $\alpha\!=\!\frac12$},
\\[4ex]
\displaystyle{
\exp\Big[\!-\frac{\xi^2\, (1\!+\!o(1))}{(1\!-\!q)^{2\alpha-1}}\Big]
} \, , & \text{if $\frac 12\!<\!\alpha\!<\! 1 $},
\\[4ex]
\displaystyle{
 \exp\Big[
\!-\frac{(\xi\!+\!1) \Psi (1\!+\!\xi) (1\!+\!o(1))}{(1\!-\!q)}
\Big]
} \, , & \text{if $\alpha\!=\!1$},
\end{cases}
\end{align*}
\\[0ex]
where in the last line
$\Psi(s)\!=\!s\!+\!W_0(-se^{-s}) \!+\!\frac{1}{s}\Li_2\! \big(1\!-\!e^{s+W_0(-se^{-s})}\big)$, $s\!>\!1$,
with $W_0$ and $\Li_2$  being, correspondingly, the principal branch of the Lambert-$W$ function and the dilogarithm function.
\end{theorem}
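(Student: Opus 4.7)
I would base the proof on a uniform asymptotic expansion of the conditional cumulant generating function $\Lambda_q(\lambda) := \log \E_\varphi\{e^{\lambda N_q} \mid \varphi(0) = a\}$ as $q \to 1^-$. Setting $1+x = e^\lambda$ in Theorem~\ref{Thm2}, writing $\epsilon = -\log q \sim 1-q$, and rescaling $m = s/\epsilon$ in the resulting $q$-series, Euler--Maclaurin converts $\log(q;q)_m$ into $\epsilon^{-1}(\Li_2(e^{-s}) - \pi^2/6) + O(1)$, and the $m$-sum is dominated by a saddle at $s^* = \lambda$. This yields, uniformly on compact $\lambda$-sets and with $|a|$-dependent contributions absorbed in the error term,
\[
\Lambda_q(\lambda) = \frac{F(\lambda)}{1-q} + O\!\left(\log\tfrac{1}{1-q}\right), \qquad F(\lambda) := \lambda\log(e^\lambda - 1) - \tfrac{\lambda^2}{2} + \tfrac{\pi^2}{6} - \Li_2(e^{-\lambda}).
\]
Landen's identity $\Li_2(z) + \Li_2(z/(z-1)) = -\tfrac12\log^2(1-z)$ combined with the reflection formula for $\Li_2$ produces the closed form
\[
F(\lambda) = -\Li_2(1-e^\lambda), \qquad F'(\lambda) = \frac{\lambda e^\lambda}{e^\lambda - 1},
\]
and the Taylor expansion $F(\lambda) = \lambda + \lambda^2/4 + O(\lambda^3)$ reproduces \eqref{muqa} and \eqref{sigmaqa} at leading order.

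\textbf{Gaussian regime ($\alpha = 1/2$).} Setting $\lambda = t\sqrt{2(1-q)}$ in the expansion above yields $\log \E_\varphi\{e^{\lambda(N_q - \mu_q(a))} \mid \varphi(0)=a\} = t^2/2 + o(1)$, so Curtiss' theorem delivers $(N_q - \mu_q(a))/\sigma_q(a) \to N(0,1)$ in distribution. Since $\sigma_q(a) \sim 1/\sqrt{2(1-q)}$ and the difference between the centerings $1/(1-q)$ and $\mu_q(a)$ is bounded, the event $|N_q - (1-q)^{-1}| \geq \xi/(1-q)^{1/2}$ corresponds to $|Z| \geq \xi\sqrt 2$ with $Z \sim N(0,1)$, and the change of variables $u = v/\sqrt 2$ in the Gaussian tail gives the stated integral.

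\textbf{Moderate and large deviations.} For $\alpha \in (1/2, 1]$ and $k = 1/(1-q) + \xi/(1-q)^\alpha$, the Chernoff bound $\Prob\{N_q \geq k \mid \varphi(0) = a\} \leq \exp[\Lambda_q(\lambda) - \lambda k]$ is optimised at $\lambda^*$ solving $F'(\lambda^*) = k(1-q)$. For $1/2 < \alpha < 1$, $k(1-q) = 1 + \xi(1-q)^{1-\alpha} \to 1$ forces $\lambda^* \sim 2\xi(1-q)^{1-\alpha} \to 0$, and the quadratic Taylor of $F$ gives the Legendre value $\xi^2/(1-q)^{2\alpha - 1}$. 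For $\alpha = 1$, $k(1-q) = 1+\xi$ stays of order one; rewriting the saddle equation $\lambda^* e^{\lambda^*}/(e^{\lambda^*} - 1) = s$ with $s = 1+\xi$ as $(\lambda^* - s)e^{\lambda^* - s} = -se^{-s}$ identifies $\lambda^* = s + W_0(-se^{-s})$, and substituting the closed form for $F$ turns the Legendre transform into
\[
\lambda^* s - F(\lambda^*) = s\lambda^* + \Li_2(1-e^{\lambda^*}) = s\,\Psi(s),
\]
which is the asserted rate. A matching lower bound in each regime comes from exponential tilting: under $d\tilde{\mathbb P} \propto e^{\lambda^* N_q} d\mathbb P$, $N_q$ has tilted mean $k$ and variance $F''(\lambda^*)/(1-q) = O(1/(1-q))$, and a local central limit theorem in a $(1-q)^{-1/2}$-window around $k$ recovers the leading exponential factor.

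\textbf{Main obstacle.} The technical crux is to make the expansion of $\Lambda_q(\lambda)$ uniform in $\lambda$ on each relevant range (in particular on a compact interval bounded away from the singularities of the $q$-series for $\alpha=1$), with errors explicit in $|a|$; this uniformity is what ties the Chernoff upper bound to the tilted local CLT lower bound. An integral representation of the $q$-series --- obtained, for instance, by Mellin--Barnes inversion of $e^{-|a|^2 q^{-m}}$, which folds the $m$-sum into the $q$-exponential product $\prod_{k \geq 0}(1 + (e^\lambda - 1)q^{s+k})$ inside a contour integral --- provides the natural framework for the saddle-point analysis. A final check, since $\Psi$ is one-sided while the event is two-sided, is that the upper tail dominates in the large-deviation regime: a cubic expansion gives $I_+(1+\xi) = \xi^2 - \tfrac29\xi^3 + O(\xi^4)$ versus $I_-(1-\xi) = \xi^2 + \tfrac29\xi^3 + O(\xi^4)$, and the lower-tail rate is in any case bounded above by $-F(-\infty) = \pi^2/6$ (from $\Li_2(1) = \pi^2/6$) while $I_+$ grows without bound, so the two-sided probability agrees with the upper tail to $(1+o(1))$ in the exponent.
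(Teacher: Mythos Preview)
Your proposal is correct and lands on the same key fact as the paper: the conditional scaled cumulant generating function satisfies $\Lambda_q(\lambda)\sim -\Li_2(1-e^\lambda)/(1-q)$ with the $a$-dependence entering only at $O(1)$, after which the three regimes follow by standard CLT and G\"artner--Ellis type arguments. The route to this asymptotic differs slightly. The paper does not saddle the $m$-sum directly; instead it writes the $q$-series as a Hankel contour integral
\[
F(x,t;q)=\frac{(q;q)_\infty}{2\pi i}\oint_{\mathcal C}\frac{e^{-\log x\log z/\log q}}{(z;q)_\infty}\,e^{t(1-z)}\,dz
\]
(the residues at $z=q^{-k}$ reproduce the series), replaces $(z;q)_\infty$ by its Euler--Maclaurin approximation $e^{-\Li_2(z)/\varepsilon}\sqrt{1-z}$, and locates the saddle at $z=1-x$, arriving at $\log F(x,t;q)=-\varepsilon^{-1}\Li_2(x)+\tfrac12\log(1-x)+tx+O(\varepsilon)$; this is your formula after $x=1-e^\lambda$. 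The contour packaging makes uniformity in $x$ on compacts away from $[1,\infty)$ essentially automatic and renders the $t$-dependence a harmless multiplicative $e^{tx}$ --- precisely the ``technical crux'' you flag, and your proposed Mellin--Barnes route would achieve the same thing. For the deviation regimes the paper simply invokes G\"artner--Ellis (the limiting scaled CGF coinciding with Waknin's unconditional one) rather than your Chernoff-plus-tilting, but these are equivalent here. Your check that the upper tail dominates the two-sided event is also made in the paper, in the section following the proof.
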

\smallskip

We see that the leading asymptotic form of the scaling law of fluctuations does not depend on the value of $\varphi (z)$ at the origin $z=0$.  In case when $\varphi (0)=0$, which corresponds to the unconditional case, convergence of the distribution of $N_q$ to normal was previously established by Peres and Virag \cite{PV04} and the large deviation rate function for $N_q$ was obtained by Waknin \cite{W23} (who also found the rate function for deviations on the scale exceeding  $\var \{N_q\}$). Both results are obtained by using representation of $N_q$ as sum of independent random variables, and the convergence to normal distribution in case when the fluctuations scale with $\sqrt{\var \{N_q\}}$  can also be inferred from the general central limit theorem due to Costin-Lebowitz \cite{CL95} and Soshnikov \cite{S00}. These tools are not available if $a\not= 0$ and we prove Theorem \ref{thm:1.9} in Section \ref{Sec:4} by making use of the $q$-series representation of the moment generating function of $N_q$ \eqref{fm8i}, see Theorem \ref{afixed_t}.

\smallskip
Now, we turn to investigating probabilities for $N_{q}$ to take finite values in the limit when $q$ is approaching 1. To simplify the notations, let
\begin{align*}
P_k(q)=  \Prob \{N_q\!=\!k \},  \quad  P_k(|a|^2; q)=  \Prob \{N_q\!=\!k \big| {\varphi (0)} \!=\!a\}.
\end{align*}
In the unconditioned case the hole probability, $P_k(q)$ with $k=0$, is given by \eqref{pgfNq} with $x=-1$. Its asymptotic form near $q=1$ can obtained by the use of the Euler - Maclaurin formula. To leading order one finds \cite{PV04}
\begin{align}\label{hp_0_a}
P_0(q)=\sqrt{\frac{2\pi}{1-q} }\, \exp\!\left[{-\frac{\pi^2}{6(1-q)}} + O(1) \right].
\end{align}
Similarly, using Euler - Maclaurin we find that
\begin{align*}
 P_1(q)=P_0(q)\, \sum_{m=1}^{\infty} \frac{q^m}{1-q^m}  =
P_0(q)\, \frac{\log (1-q)}{\log q }\,   \left[1+ O \left(\frac{1}{\log (1-q)} \right)\right]\, .
\end{align*}
Extending this calculation to the values of $N_q$  other than 0 and 1 might be feasible but is cumbersome because of multiple sums due to higher order derivatives of \eqref{pgfNq}. An alternative calculation based on a contour integral representation of  the
moment generating function yields
\begin{theorem}
\label{thm:1.10}
In the limit when $q$ approaches 1 whilst $k$ is fixed,
\begin{align}\label{N_q=k}
P_k(q) \sim \frac{\lambda_q^k}{k!}\, P_0(q), \quad \lambda_q= \frac{\log (1-q)}{\log q },
\end{align}
and, moreover, for every $t>0$ and $k\ge 1$
 \begin{align}\label{N_q=k_cond}
\lim_{q\to 1^{-}} \frac{P_{k}(t;q)}{P_{k-1}(q) } =
 e^{t}.
\end{align}
\end{theorem}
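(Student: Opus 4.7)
The approach is to apply Cauchy's integral formula to the (conditional or unconditional) moment generating function to extract $P_k(q)$ or $P_k(t;q)$, and then estimate the resulting contour integral in the limit $q\to 1^-$.

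\textbf{Part (a).} Start from $\E\{(1+x)^{N_q}\} = \prod_{j\ge 1}(1+xq^j)$ and write
\[
P_k(q) = \frac{1}{2\pi i}\oint_{|1+x|=\rho}\frac{\prod_{j\ge 1}(1+xq^j)}{(1+x)^{k+1}}\,dx.
\]
Factoring $1+xq^j = (1-q^j)\bigl[1+(1+x)q^j/(1-q^j)\bigr]$ pulls out $P_0(q)=\prod_{j\ge 1}(1-q^j)$. Substituting $y=1+x$ and rescaling $y=w/\lambda_q$ gives
\[
P_k(q) = \frac{P_0(q)\,\lambda_q^k}{2\pi i}\oint_{|w|=R}\frac{f(w/\lambda_q;q)}{w^{k+1}}\,dw,\quad f(y;q)=\prod_{j\ge 1}\!\Bigl(1+\tfrac{yq^j}{1-q^j}\Bigr).
\]
The Lambert-series asymptotic $\sum_{j\ge 1}q^j/(1-q^j)=\lambda_q(1+o(1))$, together with the fact that the higher power sums $\sum_j [q^j/(1-q^j)]^r = O((1-q)^{-r})=o(\lambda_q^r)$ for $r\ge 2$, implies $\log f(w/\lambda_q;q)\to w$ uniformly on $|w|=R$. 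Hence $f(w/\lambda_q;q)\to e^w$ uniformly and Cauchy's formula yields $P_k(q)\sim P_0(q)\lambda_q^k/k!$.

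\textbf{Part (b).} Write the conditional MGF from Theorem \ref{Thm2} as $G(x;t,q)=e^t\Phi(x;t,q)$ with $\Phi(x;t,q)=\sum_{m\ge 0} x^m q^{\binom{m}{2}}(q;q)_m^{-1}e^{-tq^{-m}}$, and decompose $\Phi(x;t,q)=\Phi(x;0,q)-R(x;t,q)$ where $R(x;t,q)=\sum_m x^m q^{\binom{m}{2}}(q;q)_m^{-1}(1-e^{-tq^{-m}})$. By Euler's identity $\Phi(x;0,q)=(1+x)\prod_{j\ge 1}(1+xq^j)$, so its $(1+x)^k$-Taylor coefficient at $x=-1$ is exactly $P_{k-1}(q)$. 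Cauchy therefore gives
\[
P_k(t;q)=e^t\bigl[P_{k-1}(q)-r_k(t,q)\bigr],\qquad r_k(t,q):=\frac{1}{2\pi i}\oint\frac{R(x;t,q)}{(1+x)^{k+1}}\,dx,
\]
reducing the theorem to the estimate $r_k(t,q)=o(P_{k-1}(q))$ as $q\to 1^-$.

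\textbf{Bounding $r_k$ and main obstacle.} Use the functional identity $\partial_s\Phi(x;s,q)=-\Phi(x/q;s,q)$ (termwise differentiation) to write $R(x;t,q)=\int_0^t\Phi(x/q;s,q)\,ds$, so that $r_k(t,q)=\int_0^t\tilde c_k(s,q)\,ds$ with $\tilde c_k(s,q)$ the $(1+x)^k$-coefficient at $x=-1$ of $\Phi(x/q;s,q)$. At $s=0$ the identity $\Phi(x/q;0,q)=(1+x/q)\Phi(x;0,q)$ combined with Leibniz yields
\[
\tilde c_k(0,q)=\bigl(1-\tfrac{1}{q}\bigr)P_{k-1}(q)+\tfrac{1}{q}P_{k-2}(q)=o(P_{k-1}(q)),
\]
since $1-1/q=O(1-q)$ and $P_{k-2}(q)/P_{k-1}(q)=O(1/\lambda_q)$ by Part (a). For $s\in(0,t]$ one iterates the functional identity to get $\tilde c_k(s,q)-\tilde c_k(0,q)=-\int_0^s\tilde{\tilde c}_k(u,q)\,du$, where $\tilde{\tilde c}_k(u,q)$ is the analogous coefficient of $\Phi(x/q^2;u,q)$; at $u=0$ the factorization $\Phi(x/q^2;0,q)=\prod_{l=1}^2(1+x/q^l)\,\Phi(x;0,q)$ vanishes to second order at $x=-1$ and Leibniz yields $\tilde{\tilde c}_k(u,q)=O(P_{k-1}(q)/\lambda_q^2)$ uniformly on bounded $u$-intervals, giving $\tilde c_k(s,q)=o(P_{k-1}(q))$ uniformly and then $r_k(t,q)=o(P_{k-1}(q))$. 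The main technical obstacle is precisely the uniform estimate for $s>0$: because $\Phi(x/q;s,q)$ lacks a clean infinite-product factorization away from $s=0$, the iteration must be carried through carefully (or truncated with a remainder bounded by a rescaled contour argument in the spirit of Part (a)) in order to verify that the weight $e^{-sq^{-m}}\le 1$ does not destroy the logarithmic gap that separates $\tilde c_k(s,q)$ from $P_{k-1}(q)$.
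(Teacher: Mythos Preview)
Your Part~(a) is correct and takes a genuinely different, more elementary route than the paper. The paper works with a Hankel-contour representation
\[
P_k(q)=\frac1{k!}\frac{(q;q)_\infty}{2\pi i}\int_{\mathcal{C}}\frac{\prod_{l=0}^{k-1}\log(zq^l)}{z\,(z;q)_\infty(\log q)^k}\,dz,
\]
derived from the residue structure of $1/(z;q)_\infty$, then approximates $(z;q)_\infty$ via the quantum dilogarithm and substitutes $z=\varepsilon s$ to reduce to $\frac1{2\pi i}\int_{\mathcal C}s^{-1}e^s\,ds=1$. Your direct Cauchy argument on the product $\prod_j(1+xq^j)$, with the rescaling $y=w/\lambda_q$ and the Lambert-series asymptotic, avoids this machinery entirely and is cleaner for Part~(a) taken on its own.

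Part~(b), however, has a genuine gap that you yourself flag: the bound $\tilde{\tilde c}_k(u,q)=O(P_{k-1}(q)/\lambda_q^2)$ \emph{uniformly on bounded $u$-intervals} is asserted but only verified at $u=0$, and the iteration/truncation you sketch is not shown to close. The difficulty is real: for $s>0$ the function $\Phi(x/q^j;s,q)$ has no product factorisation, so neither the Leibniz computation nor a crude Cauchy estimate on $|1+x|=\rho$ delivers the required $\lambda_q^{-j}$ decay without essentially re-proving the theorem for the conditional quantity. The paper sidesteps this completely. In the Hankel representation the integrand for $P_k(t;q)$ differs from that for $P_k(q)$ only by an extra factor $z\,e^{t(1-z)}$; after the same substitution $z=\varepsilon s$ this becomes, to leading order, an extra $\varepsilon s\,e^t$. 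The key observation is then that $\frac1{2\pi i}\int_{\mathcal C}e^s\,ds=0$, so the would-be leading $(\log\varepsilon)^k$ term from expanding $(\log\varepsilon+\log s)^k$ vanishes and the next term $k(\log\varepsilon)^{k-1}\cdot\frac1{2\pi i}\int_{\mathcal C}e^s\log s\,ds=-k(\log\varepsilon)^{k-1}$ survives, yielding $P_k(t;q)\sim e^t\lambda_q^{k-1}P_0(q)/(k-1)!=e^tP_{k-1}(q)$ in one stroke. This cancellation mechanism is exactly what produces the index shift $k\mapsto k-1$ and is invisible in your decomposition; it is the idea you are missing to close Part~(b).
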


\smallskip
This theorem is proved in Section  \ref{section5}.

\smallskip

 The conditional hole probability, $P_0(|a|^2; q)$, for general $a^2>0$  is not covered by Theorem \ref{thm:1.10} and hence requires a separate treatment. Forrester and Ipsen conjectured that its leading order term in the limit when $q$ approaches 1  is independent of $a$ and	then naturally concluded that it should be the same as that for the unconditional case, $a=0$, see Eq. (3.20) in \cite{FI2019}. Quite surprisingly, we find that despite the leading-order form being indeed independent of $a \ne  0$, it is still different from that for the unconditional probability result, in contrast to the conclusion in  \cite{FI2019}.

 Our next theorem furnishes an asymptotic expansion of the conditional hole probability and confirms that  to next-to-leading-order term $ \Prob \{ N_q\!=\!0 | {\varphi (0)} \!=\!a  \} $
does not depend on the value of $a\not=0$. However the expression is different from one for $a=0$ and actually coincides with that
 of the Gaussian power series  of the form $\sum_{k\ge 0} \sqrt{k+1}\, c_k z^k$, see
 \cite{BNPS2018} where the asymptotic form of the hole probability at $q=1$ was determined for the entire family of such hyperbolic GAF.
\begin{theorem}
 \label{Thm:hole_pr}
 \phantom{a}
 For every $t>0$ in the limit $\varepsilon\to 0^{+}$
  \begin{align}\label{gappr2m}
 \log P_0(t;e^{-\varepsilon})
\!=\!
-\frac{1}{\varepsilon} \left[ \frac{ w^2}{2}+w +\frac{\pi^2}{3} \right]  -\frac{1}{2} \log (\varepsilon t) + t  -\frac{t}{w}  +O(\varepsilon), \quad w=W_{-1}(-\varepsilon t)
\end{align}
where $W_{-1}$ is the lower branch of the Lambert-$W$ function.
 \end{theorem}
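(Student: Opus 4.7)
\emph{Proof plan for Theorem \ref{Thm:hole_pr}.}
The plan is to recast the conditional hole probability as a Mellin--Barnes integral and to extract its asymptotics by combining modular (Poisson) transformation of the $q$-Pochhammer with a saddle-point analysis in which the dominant saddle is selected by the lower branch of the Lambert $W$ function. First, starting from the series representation of $P_0(t;q)$ given by formula \eqref{P_0_t}, I would apply the Mellin representation $e^{-u}=\int_{c-i\infty}^{c+i\infty}\Gamma(s)u^{-s}\,ds/(2\pi i)$ (with $c>0$) to $u=tq^{-m}$, and sum the resulting $m$-series by Euler's $q$-binomial identity $\sum_{m\ge 0} z^m q^{m(m-1)/2}/(q;q)_m=(-z;q)_\infty$. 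This gives
\begin{equation*}
P_0(t;q)\;=\;e^{t}\int_{c-i\infty}^{c+i\infty}\Gamma(\sigma)\,t^{-\sigma}\,(q^\sigma;q)_\infty\,\frac{d\sigma}{2\pi i},\qquad c>0.
\end{equation*}
A simple residue check shows that the simple poles of $\Gamma(\sigma)$ at $\sigma\in\{0,-1,-2,\dots\}$ are cancelled by the simple zeros of $(q^\sigma;q)_\infty$ at the same points, so the integrand is entire in $\sigma$ and the contour can be deformed freely.

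The heart of the argument is the modular asymptotic for $(q^\sigma;q)_\infty$ as $\varepsilon\to0^+$. Starting from the Jacobi triple product $(q;q)_\infty(q^\sigma;q)_\infty(q^{1-\sigma};q)_\infty=\sum_{n\in\mathbb{Z}}(-1)^n q^{n(n-1)/2+n\sigma}$, completing the square and applying Poisson summation yield, retaining only the two dominant Fourier modes, the identity $(q;q)_\infty(q^\sigma;q)_\infty(q^{1-\sigma};q)_\infty\sim 2\sqrt{2\pi/\varepsilon}\,\sin(\pi\sigma)\,e^{-\pi^2/(2\varepsilon)+\varepsilon(\sigma-1/2)^2/2}$. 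Combined with the Dedekind-$\eta$ asymptotic $(q;q)_\infty\sim\sqrt{2\pi/\varepsilon}\,e^{-\pi^2/(6\varepsilon)}$, this produces $(q^\sigma;q)_\infty(q^{1-\sigma};q)_\infty\sim 2\sin(\pi\sigma)\,e^{-\pi^2/(3\varepsilon)+\varepsilon(\sigma-1/2)^2/2}$. Substituting into the integral, applying the reflection identity $\Gamma(\sigma)\sin(\pi\sigma)=\pi/\Gamma(1-\sigma)$, and changing variables $\mu=1-\sigma$ recasts the Mellin integral as
\begin{equation*}
P_0(t;q)\;\sim\;\frac{2\pi\,e^{t}\,e^{-
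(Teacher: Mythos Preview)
Your Mellin--Barnes representation
\[
P_0(t;q)=e^{t}\,\frac{1}{2\pi i}\int_{c-i\infty}^{c+i\infty}\Gamma(\sigma)\,t^{-\sigma}\,(q^\sigma;q)_\infty\,d\sigma
\]
is correct, and this is a genuinely different starting point from the paper. The paper uses the contour integral of Lemma~\ref{lemma2} in the variable $z$, replaces $(z;q)_\infty$ by the Euler--Maclaurin approximation of Lemma~\ref{lem_qPochas}, and is then left with a saddle-point integral for $f(z)=\Li_2(z)+t\varepsilon(1-z)$ whose saddle $z_s\sim W_{-1}(-t\varepsilon)/(t\varepsilon)$ escapes to $-\infty$, forcing a rescaling $z=\zeta\,w/(t\varepsilon)$ before the Gaussian approximation becomes valid.

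There is, however, a real gap at your modular step. The Jacobi triple product combined with Poisson summation gives, as you wrote,
\[
(q^\sigma;q)_\infty\,(q^{1-\sigma};q)_\infty \;\sim\; 2\sin(\pi\sigma)\,e^{-\pi^2/(3\varepsilon)+\varepsilon(\sigma-1/2)^2/2},
\]
but the integrand contains $(q^\sigma;q)_\infty$ alone. You cannot ``substitute into the integral'' without separately controlling $(q^{1-\sigma};q)_\infty$ and dividing. That factor is not innocuous on the relevant contour: once $\re\sigma$ is large (and it must be, since the eventual saddle lives at a scale comparable to $|w|/\varepsilon$), $q^{1-\sigma}=e^{\varepsilon(\sigma-1)}$ is huge and $(q^{1-\sigma};q)_\infty$ picks up many large factors and oscillatory signs. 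The reflection identity $\Gamma(\sigma)\sin(\pi\sigma)=\pi/\Gamma(1-\sigma)$ does not by itself dispose of this extra $q$-Pochhammer. As written, the plan therefore stops short of producing a usable asymptotic for the integrand, and the proposal is truncated before any saddle-point analysis, so one cannot verify that the rate $-\varepsilon^{-1}(w^2/2+w+\pi^2/3)$ and the subleading $-\tfrac12\log(\varepsilon t)+t-t/w$ emerge.

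A repair along your lines is to abandon the triple-product route and apply Lemma~\ref{lem_qPochas} directly with $x=e^{-\varepsilon\sigma}$, obtaining $\log(q^\sigma;q)_\infty=-\varepsilon^{-1}\Li_2(e^{-\varepsilon\sigma})+\tfrac12\log(1-e^{-\varepsilon\sigma})+O(\varepsilon)$. Combined with Stirling for $\Gamma(\sigma)$ this yields a saddle equation equivalent (after the substitution $z=e^{-\varepsilon\sigma}$) to the paper's, and you would then face the same escaping-saddle phenomenon and need a rescaling analogous to that in Section~\ref{Sec:4}.
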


We prove this theorem in Section \ref{Sec:4}.

 \begin{remark}\label{remark_Lambert}
The Lambert-$W$ function is the multivalued inverse to the function $w\mapsto w e^w$  \cite{CGHJK96}. It has two real-valued branches on the interval $[-e^{-1},0)$. One branch has values
greater than or equal to $-1$, and the other has values less than or equal to $-1$. The former  is denoted by $W_0$ and is referred to as the principal branch,  and the latter is denoted by $W_{-1}$ and is often referred to as the lower branch, see Fig. \ref{Fig:6}. At zero, the principal branch vanishes and the lower branch has the asymptotic expansion \cite{CGHJK96}
\begin{align}\label{W_ae}
W_{-1}(-\varepsilon) \mathop{\sim}_{\,\,\,\,\varepsilon\to 0^+}
-\eta-\log\eta+\sum_{n=1}^{\infty}%
\frac{1}{\eta^{n}}\sum_{m=1}^{n}\genfrac{[}{]}{0.0pt}{}{n}{n-m+1}\frac{\left(-\log\eta\right)^{m}}{m!}, \quad \quad \eta = \log \frac{1}{\varepsilon},
\end{align}
where $\genfrac{[}{]}{0.0pt}{}{n}{n-m+1}$ are unsigned Stirling numbers of the first kind.
Using this expansion we find that for $t>0$
\begin{align*}
\MoveEqLeft
\log P_0(t;e^{-\varepsilon}) = \\
\nonumber & -\frac{ (\eta + \log \eta )^2}{2\varepsilon}  + \frac{(1+\log t)\eta}{\varepsilon} +\frac{(\log t )\log \eta}{\varepsilon}
-\frac{\log^2 t }{2\varepsilon} -\frac{\pi^2}{3 \varepsilon }+ \frac{1}{\varepsilon}O\left( \frac{\log \eta}{ \eta}\right), \quad \eta=\log \frac{1}{\varepsilon}.
\end{align*}
It is evident that the correct scale in the expansion \eqref{gappr2m}   is given in terms of the Lambert-$W$ function. For, truncating the asymptotic series for $W_{-1}(-\varepsilon t)$ introduces an error which renders the $O(\log \varepsilon)$ terms in \eqref{gappr2m} obsolete.
\end{remark}
\smallskip

From Theorems \ref{thm:1.9} and \ref{thm:1.10} one is naturally led to the problem of determining scaling law(s) far in the tail of the distribution of $N_q$. The theorem below determines the scaling law when deviations from the expected value are of the same order as the expected value itself or are at a smaller scale.

\begin{theorem}\label{maintheorem}
Fix $s>0$. Then in the limit $k\to\infty$ it holds that
\begin{align}\label{eq:mthm1}
P_k\big(e^{-s/k}\big)=\frac{A(s)}{\sqrt{k\pi}}\, e^{-k\Psi (s)} (1+O(1/k))
\end{align}
with
\begin{align*}
A(s)=\sqrt{\frac{\log z_s}{2(z_s-s)}}, \quad \Psi(s)=\log z_s+\frac{1}{s} \Li_2(1-z_s),
\end{align*}
where
$z_s=e^{s+w_s}$ and $w_s=W_{-1}(-se^{-s})$ when $s\in(0,1)$ and $w_s=W_{0}(-se^{-s})$ when $s\ge 1$.
Moreover, for $t>0$,
\begin{align}\label{eq:mthm2}
P_k\big(t; e^{-s/k}\big)=z_se^{t(1-z_s)} P_k\big(e^{-s/k}\big) \big(1+O(1/k)\big)\, .
\end{align}
\end{theorem}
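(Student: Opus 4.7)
My plan is to derive both parts by saddle point analysis applied to contour integrals obtained from the generating functions \eqref{pgfNq} and \eqref{fm8i}. For \eqref{eq:mthm1}, start from the Euler product identity and Cauchy's formula to write
\[
P_k(q) = \frac{1}{2\pi i}\oint \frac{F(y)\,dy}{y^{k+1}}, \qquad F(y) := \prod_{j\ge 1}\bigl(1 + (y-1)q^j\bigr),
\]
on a small positively oriented circle about the origin. In the scaling $q = e^{-s/k}$, the Euler--Maclaurin formula applied to $\sum_{j\ge 1}\log(1+(y-1)q^j)$ gives, uniformly for $y$ in a complex neighbourhood of the positive real axis,
\[
\log F(y) = -\frac{k}{s}\Li_2(1-y) - \tfrac12 \log y + O(1/k),
\]
so the integrand becomes $\exp\!\bigl[-kf(y) - \tfrac32 \log y + O(1/k)\bigr]$ with $f(y) = \log y + \Li_2(1-y)/s$.

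The saddle equation $f'(z)=0$ reads $s(1-z) + z\log z = 0$; the substitution $w = \log z - s$ converts it to $we^w = -se^{-s}$, so $z_s = e^{s+w_s}$. The branch is forced by the requirement that $|y|=z_s$ be a steepest-descent contour, equivalently $\alpha := -z_s^2 f''(z_s) > 0$. A direct computation, using the saddle equation to rewrite $1-z+z\log z = (1-z)(1-s)$, gives $\alpha = (s-z_s)/(s(1-z_s))$, which is positive precisely for $w_s = W_{-1}(-se^{-s})$ when $s<1$ and $w_s = W_0(-se^{-s})$ when $s>1$, the alternative branch producing the degenerate root $z_s = 1$. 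Parametrising $y = z_s e^{i\theta}$, the linear-in-$\theta$ terms of the exponent cancel because $\frac{d}{d\theta}\Li_2(1-z_s e^{i\theta})|_{\theta=0} = -is$ (again by the saddle equation), leaving a Gaussian of width $1/\sqrt{k\alpha}$; standard bounds on $|\theta|>\delta$, where $\re[f(z_s e^{i\theta})-f(z_s)]$ is bounded below, control the tail and the cubic-order contributions. Evaluating the Gaussian and using the identity $\alpha z_s = (z_s-s)/\log z_s$ (yet another rearrangement of the saddle equation) produces
\[
P_k(e^{-s/k}) = \frac{e^{-k\Psi(s)}}{\sqrt{2\pi k\,\alpha z_s}}\bigl(1+O(1/k)\bigr) = \frac{A(s)}{\sqrt{k\pi}}\,e^{-k\Psi(s)}\bigl(1+O(1/k)\bigr),
\]
which is \eqref{eq:mthm1}.

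For \eqref{eq:mthm2}, the obstacle is that the conditional moment generating function \eqref{fm8i} is a $q$-series rather than a product. I would remove it via a Mellin--Barnes representation: for $c > 0$,
\[
e^{-t q^{-m}} = \frac{1}{2\pi i}\int_{c-i\infty}^{c+i\infty}\Gamma(u)\, t^{-u}\, q^{mu}\, du.
\]
Substituting this into \eqref{fm8i} and interchanging sum and integral (justified by Fubini, using the exponential decay of $|\Gamma|$ on vertical lines together with absolute convergence of the Euler $q$-series), the inner sum collapses via Euler's identity, yielding
\[
\E_\varphi\!\bigl\{y^{N_q}\mid \varphi(0)\bigr\} = \frac{e^t}{2\pi i}\int_{c-i\infty}^{c+i\infty}\Gamma(u)\, t^{-u}\!\prod_{j\ge 0}\!\bigl(1+(y-1)q^{u+j}\bigr)du.
\]
Extracting the coefficient of $y^k$ by Cauchy and interchanging orders then gives
\[
P_k(t;q) = \frac{e^t}{2\pi i}\int_{c-i\infty}^{c+i\infty}\Gamma(u)\, t^{-u}\, H_k(u)\,du, \qquad H_k(u) := [y^k]\!\prod_{j\ge 0}\!\bigl(1+(y-1)q^{u+j}\bigr).
\]

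The saddle-point analysis of Part 1 applies virtually unchanged to $H_k(u)$: shifting the lower limit of the product by $u$ moves the Euler--Maclaurin starting point by $u/k$, which produces a single additional term $-(u-\tfrac12)\log y$ in the exponent while leaving the leading saddle at $z_s$ independent of $u$. The consistency check $H_k(1)=P_k(q)$ fixes the arithmetic, and one obtains
\[
H_k(u) = z_s^{1-u}\,P_k(q)\,\bigl(1+O(1/k)\bigr)
\]
uniformly for $u$ on the vertical line $\re u = c$. Substituting and invoking the Cahen--Mellin identity $\frac{1}{2\pi i}\int_{c-i\infty}^{c+i\infty}\Gamma(u)(tz_s)^{-u}du = e^{-tz_s}$ yields
\[
P_k(t;q) = z_s\, e^{t(1-z_s)}\,P_k(q)\,\bigl(1+O(1/k)\bigr),
\]
which is \eqref{eq:mthm2}. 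The main technical difficulty is to establish that the $O(1/k)$ error in the saddle-point estimate of $H_k(u)$ is uniform as $|\im u|\to\infty$: the real part of the exponent (after subtracting its saddle value) must be shown to stay sufficiently negative at a rate dominated by the super-polynomial decay of $|\Gamma(u)|$ on vertical lines, so that the $u$-integral effectively localises in a bounded window where the local expansion remains valid.
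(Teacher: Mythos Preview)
Your approach to \eqref{eq:mthm1} is correct and arguably cleaner than the paper's: you extract $P_k(q)$ directly by Cauchy's formula from the Euler product $\prod_{j\ge1}(1+(y-1)q^j)$ and apply Euler--Maclaurin, arriving at the saddle equation $s=z\log z/(z-1)$ in one step. The paper instead starts from the Hankel-contour representation of Lemma~\ref{lemma2}, differentiates $k$ times, and then must expand a considerably more complicated exponent $f(k,e^{-s/k};z)$ involving both $q$-products and the sum $\sum_{l=0}^{k-1}\log\frac{ls-k\log z}{1-ze^{-ls/k}}$ before reaching the same saddle.

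Where the two approaches diverge sharply is \eqref{eq:mthm2}. In the paper's framework the conditional case is essentially free: the contour integral for $P_k(t;q)$ differs from that for $P_k(q)$ only by an extra factor $ze^{t(1-z)}$ in the integrand, which is $O(1)$ in $k$ and therefore does not move the saddle, contributing simply $z_s e^{t(1-z_s)}$ at $z=z_s$. Your route, by contrast, has no product form for the conditional generating function, so you must introduce a Mellin--Barnes layer to restore one, do the saddle-point analysis uniformly in the Mellin variable $u$, and then invoke Cahen--Mellin. The algebra checks out (your identity $H_k(u)=z_s^{1-u}P_k(q)(1+O(1/k))$ is correct and the consistency check at $u=1$ is a nice touch), but as you yourself flag, the uniformity of the $O(1/k)$ error along the whole vertical line $\re u=c$ is a genuine technical burden that the paper's approach avoids entirely. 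So: same destination, but the paper buys simplicity in part two at the cost of a messier exponent in part one, while your choice does the reverse.
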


This theorem is proved in Section  \ref{section5}.

\smallskip

\begin{remark}
Theorem \ref{maintheorem} is essentially a refinement of Theorem \ref{thm:1.9}. We demonstrate this at the end of Section \ref{section5}.
 \end{remark}

\smallskip

\begin{remark}
Theorem \ref{thm:1.10}  determines the asymptotic form of $P_k(q)$ and $P_k(t;q)$ in the limit when $q$ approaches 1 for $k=O(1)$ and Theorem \ref{maintheorem} does this for values of $k \propto 1/(1-q)$. A natural question is then whether the two results match asymptotically. We can show that they do
\emph{assuming uniformity of \eqref{eq:mthm1}  in $s$ for small values of $s$}.
Indeed,  consider the regime when $k$ is fixed and $q\to 1^{-}$, so that $s=-k\log q \to 0$.  Expanding the rate function $\Psi(s)$ and the pre-exponential factor $A(s)$ for small values of $s$ as in Appendix \ref{app:C}, one obtains  from \eqref{eq:mthm1} that
\begin{align}\label{match}
 \frac{A(s)}{\sqrt{k \pi}} \, e^{-k \Psi(s)}
 \mathop{=}_{\,\,\,\,q \to 1^{-}}
 \sqrt{\frac{\log(1-q)}{k\log q}}
 \frac{  e^{\frac{\pi^2}{ 6 \log q}}}{\sqrt{2 \pi k}\, k^{k} e^{-k} }
  \, \left[\frac{\log(1-q)}{\log q}\right]^{k} \left(1 + O\left(\frac{1}{\log q}\right)\right).
 \end{align}
Comparing the r.h.s. with the expression in \eqref{N_q=k}
gives a good agreement, with the main difference being that $k!$ in \eqref{N_q=k}  is replaced in by its  Stirling approximation for large $k$. Apart from that, \eqref{match} is off the exact asymptotic form  \eqref{N_q=k} by a factor  $\sqrt{-\log(1-q) / (2 k \pi)}$ which is not surprising if the asymptotics is not fully uniform. Similarly,
\begin{align*}
z_se^{t(1-z_s)} \sim e^t \, \frac{k \, \log q}{\log (1-q)}, \quad k=O(1), s=-k\log q \to 0,
\end{align*}
and we find that \eqref{eq:mthm2} and \eqref{N_q=k_cond} asymptotically match too.
 \end{remark}
 \smallskip

\begin{remark}
Now consider $N_q$ in the regime when $q$ is fixed,  $q\in (0,1)$. It follows from \eqref{fm8i}, see Proposition \ref{prop:5.3}, that the tail probability is given by
\begin{align} \label{P_k(a;q)}
P_k(t;q)&=\frac{q^{\binom k2}}{(q;q)_\infty}e^{t(1-q^{-k})}\left(1+O(q^k)\right), \quad q\in (0,1), k\to\infty.
\end{align}
\emph{Assuming uniformity of \eqref{eq:mthm1} in $s$ for large values of $s$} we find that the scaling law \eqref{eq:mthm1} asymptotically matches \eqref{P_k(a;q)} too. Indeed, setting $s=-k\log q$ we find in the limit $k\to\infty$ that
\begin{align*}
z_se^{t(1-z_s)}   \frac{A(s)}{\sqrt{k \pi}} \, e^{k \Psi(s)} = q^{\binom k 2} e^{t(1-q^{-k})} e^{-\frac{\pi^2}{6 \log q}} \sqrt{\frac{-\log q}{2\pi}} \, \Big(1+O(q^k)\Big).
 \end{align*}
Comparing this with the result in \eqref{P_k(a;q)} gives a remarkable agreement, with the only difference that the $q$-product is replaced by its expansion for $q\to 1$ (one could not hope for more here).
\end{remark}

\subsection{Random matrix perspective}
\label{subsection:1.3}
The analytic tractability of the random matrix ensemble $A_{n,\tau}$ defined in \eqref{A} is an important factor in determining the probability law of the zero set of the Gaussian power series \eqref{phi(z)} and \eqref{g_a(z)}, and in this context the random matrices $A_{n,\tau}$ are just a tool. Nevertheless, this ensemble being the simplest representative of finite-rank multiplicative non-unitary deformations away from the Circular Unitary Ensemble (CUE) \cite{FS00, FS03} is also interesting on its own merit. In the context of random matrix theory and its applications matrices of this type have been used as a model for studying the universal features of resonances in open chaotic maps, see the discussion and further references in, e.g., \cite{Novaes13}. In that context, an important characteristics known as the resonance widths are modelled by the distance between the complex eigenvalue and the unit circle, thus providing a motivation for studying statistics of the absolute values of eigenvalues.

In the limit of large matrix dimension the spectrum of $A_{n,\tau}$ undergoes an abrupt restructuring at $\tau=0$. For every fixed $\tau\in (0,1)$ the eigenvalues of $A_{n,\tau}$ form a narrow band near the unit circle. More precisely, for $n$ large, the bulk of the eigenvalues lie at a distance of order $1/n$ from the unit circle \cite{F00}. In fact, it can be shown that the smallest absolute square $q_{min}$ of the eigenvalues of $A_{n,\tau}$ satisfies
\begin{align}\label{AGum}
\lim_{n\to\infty}
\Prob \Big\{q_{min} \le  1- \frac{1-\tau}{\tau}\frac{\log n - \log(\log n) + y }{n} \Big\}=e^{-\frac{\tau}{1-\tau} e^{-y}}.
\end{align}
In contrast, if $\tau=0$ then, in addition to an eigenvalue band near the unit circle, $A_{n,0}$ has trivial eigenvalue at zero, and there are also eigenvalues deep inside the unit disk: the probability for $A_{n,0}$ to have at least two eigenvalues in the disk of radius $r$ about zero is $1-\prod_{k=1}^{n-1} (1-r^{2k})$ and, hence, is asymptotically non-zero  \cite{SZ00, HKPV2009}. This is illustrated in Fig. \ref{Fig:0}.
\begin{figure}
\begin{subfigure}[b]{0.36\textwidth} 
    \includegraphics[width=\linewidth]{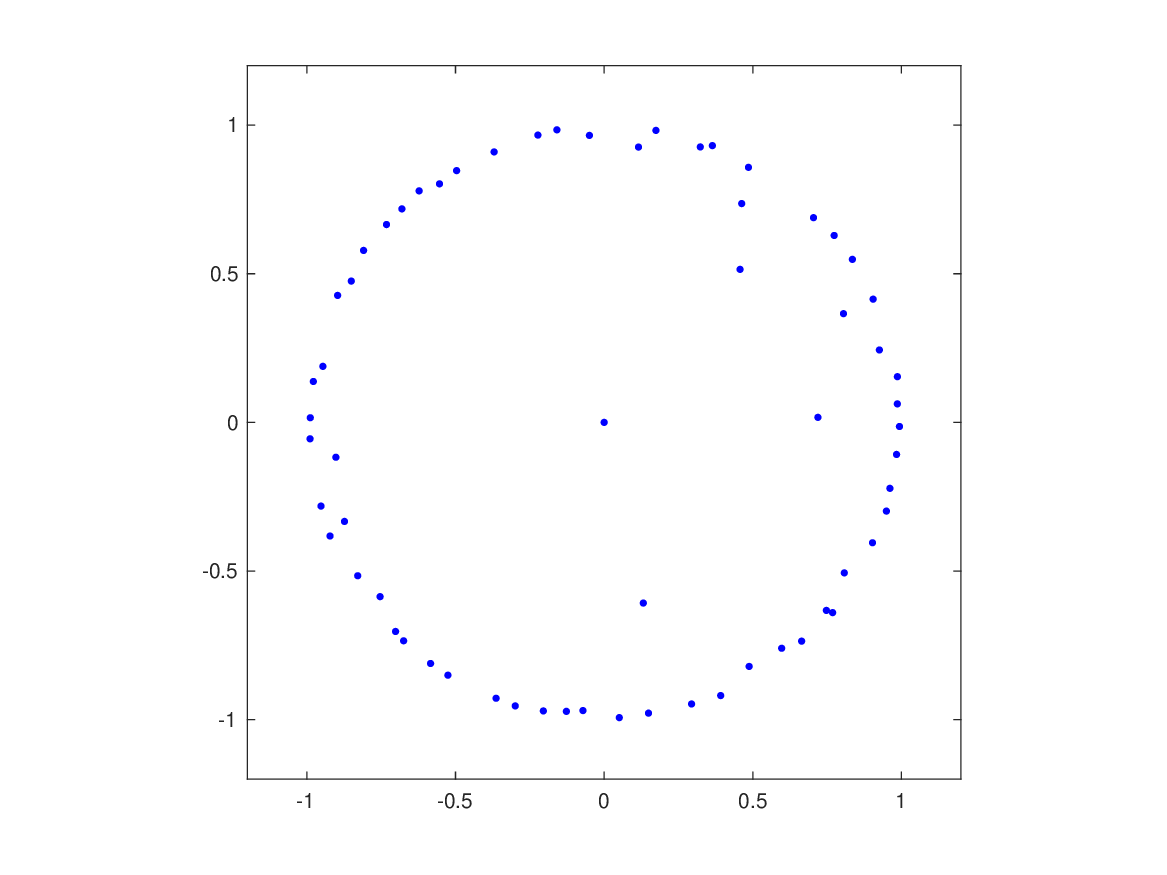}\hspace{-2.4em}%
    \caption{$\tau=0$} 
\end{subfigure}%
\hspace{-2.4em}%
\begin{subfigure}[b]{0.36\textwidth} 
    \centering
    \includegraphics[width=\linewidth]{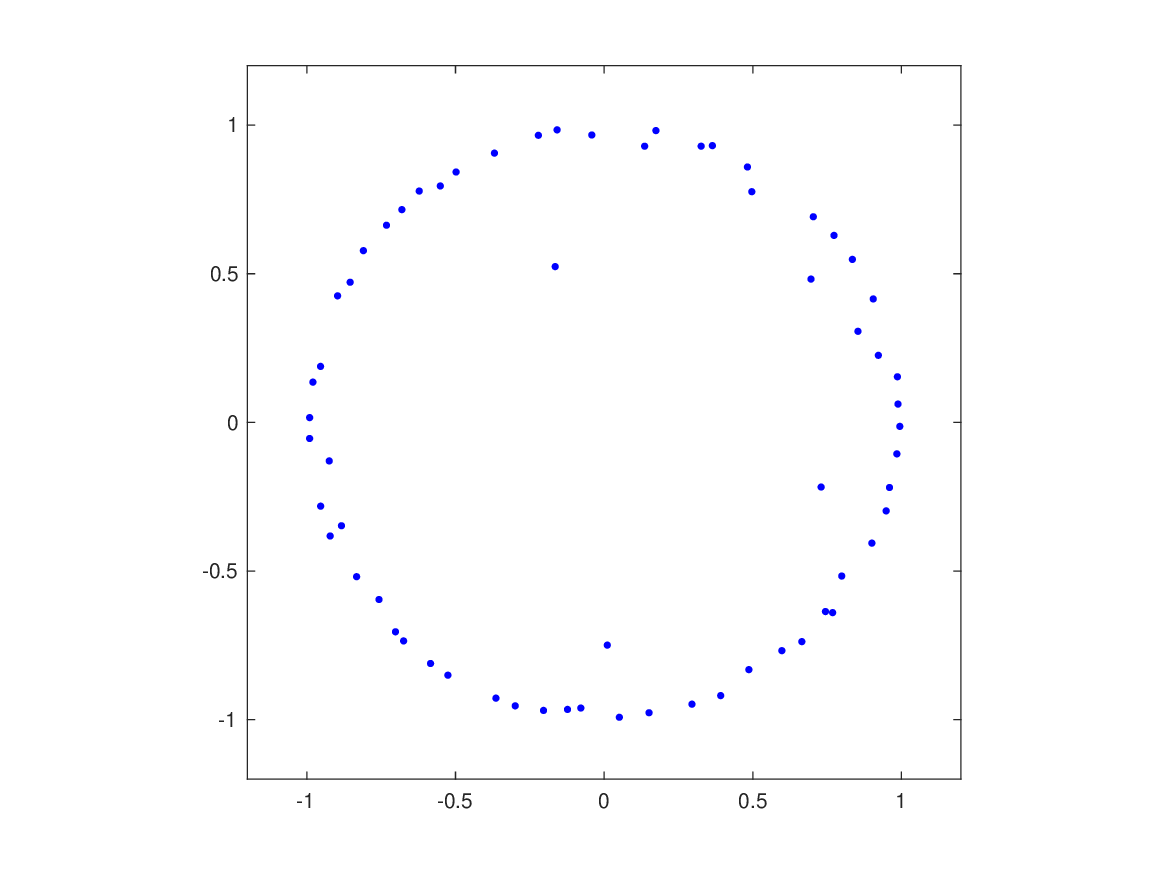}\hspace{-2.4em}%
    \caption{$\tau=0.0078$} 
\end{subfigure}%
\hspace{-2.4em}%
\begin{subfigure}[b]{0.36\textwidth} 
    \includegraphics[width=\linewidth]{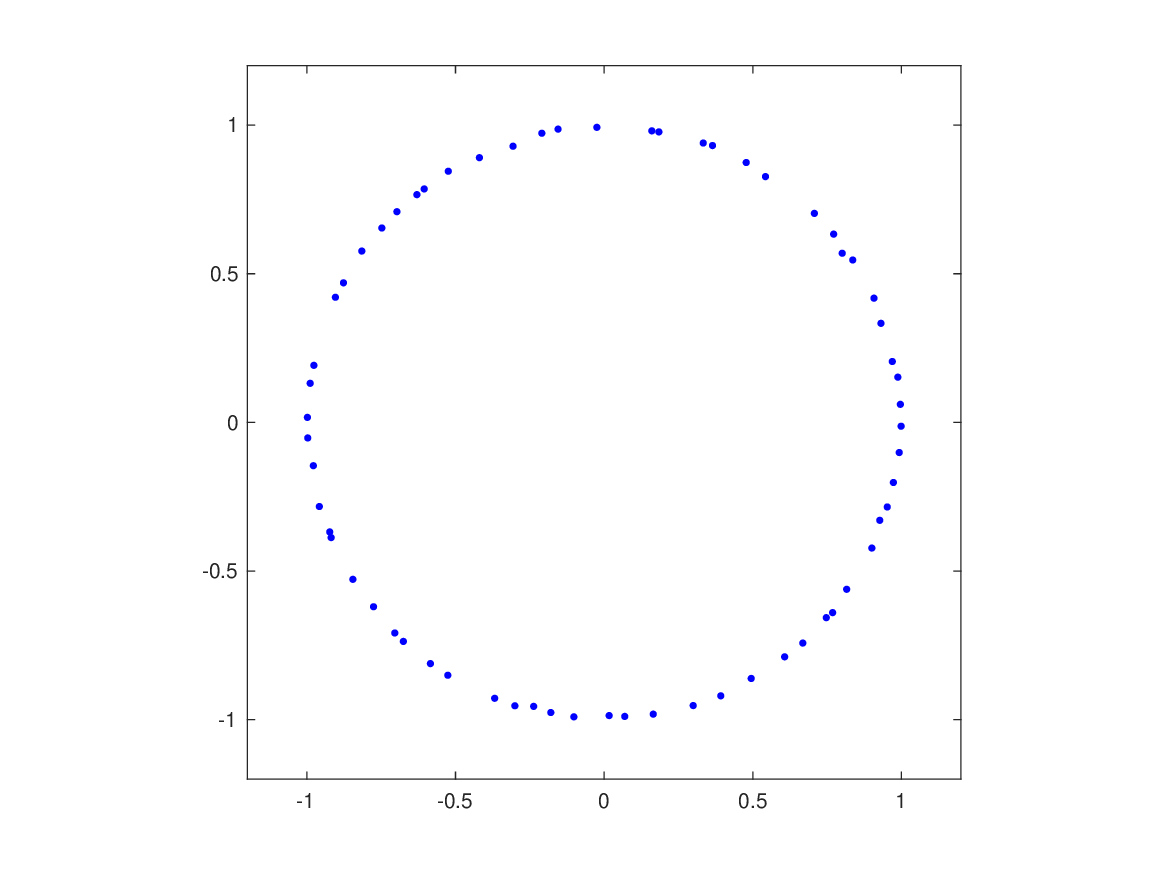}
    \caption{ $\tau=0.5$} 
\end{subfigure}%
\caption{Eigenvalues of random matrix \eqref{A} of dimension $n=64$. The three plots were produced by generating a single sample $U_n$ from the CUE and then numerically diagonalising the subunitary matrix $U_n \diag ( \sqrt{\tau}, 1, \ldots, 1)$ for three values of $\tau$: $\tau=0$ , $\tau=0.0078$, and $\tau=0.5$. }
\label{Fig:0}
\end{figure}

The abrupt spectral restructuring in the random matrix ensemble \eqref{A} poses natural questions about the critical scaling of $\tau$ that captures the emergence of outliers and about the distribution of extreme eigenvalues in the scaling limit. The first question was answered by Forrester and Ipsen in \cite{FI2019} who obtained the nonzero density of eigenvalues of $A_{n,\tau}$ and the eigenvalue correlation functions in the scaling limit \eqref{sl}, see also the recent work of Dubach and Recker  \cite{DR2023} for a different perspective on the emergence of outliers. And in this paper we obtain the (marginal) distributions of extreme eigenvalues in the scaling limit  \eqref{sl}.

A similar spectral restructuring occurs in the large-$n$ limit of rank-one additive non-Hermitian deviations from random Hermitian  matrices.
Namely, define
\begin{align}\label{J}
J_{n,\gamma} = H_n + i \Gamma_n, \quad \quad \Gamma_n=\gamma \diag (1, 0, \dots, 0),
\end{align}
where $\gamma >0$ and $H_n$ are $n\times n$ random Hermitian matrices normalised so that $\mathbf{E}\tr H_n^2=n$.  Under
suitable conditions on the distribution of $H_n$ the eigenvalues of $J_{n,\gamma}$, except possibly for a single outlier, form a narrow band just above the real axis for $n$ large.  If $\gamma <1$,
then up to small order corrections all eigenvalues of $J_{n,\gamma}$
lie within distance of $n^{-1}$ to the real axis \cite{RW2017}. For $\gamma>1$  there emerges exactly one eigenvalue lying away from this eigenvalue band, at a distance
asymptotically equal to $\gamma - \gamma^{-1}$ \cite{RR2014, RW2017, outlier}.
In the physics literature, the eigenvalues of $J_{n,\gamma}$  are associated with the zeroes of a (single-channel) scattering matrix in the complex energy plane, and their complex conjugates are associated with the poles of the same scattering matrix, known as ``resonances''. In that context the absolute value of the eigenvalue's imaginary part is associated with the ``resonance width'', so that the eigenvalues of $J_{n,\gamma}$ in the narrow band near the real axis are the ``narrow resonances'' and the outlier is the ``broad resonance''. As far as the spectral outliers are concerned, the two models \eqref{A} and \eqref{J} behave differently, see \cite{DE2023,FKP2023} for an analysis of the critical scaling regime in model \eqref{J}. At the same time, as far as the local statistics of the narrow resonances are concerned, the two models are completely interchangeable. Namely, if $H_n$ is chosen by sampling the Gaussian Unitary Ensemble (GUE) then, locally at energy $x\in (-2,2)$, the eigenvalue correlations of $J_{n,\gamma}$ near the real axis are functions of the combination
\begin{align*}
g(x)=\frac{\gamma +\gamma^{-1}}{2\pi \nu_{sc}(x)}
\end{align*}
where $\nu_{sc}(x)$ is the semicircular density of the (real) eigenvalues of $H_n$, provided the distances between the eigenvalues of $J_{n,\gamma}$ are measured in units of the mean separation $\Delta=(n \nu_{sc} (x))^{-1}$ between neighbouring eigenvalues of $H_n$ \cite{FK99}. Identifying  the radial eigenvalue deviation from the unit circle in the ensemble  $A_{n,\tau}$ with the imaginary parts of the eigenvalues of $J_{n,\gamma}$, and measuring the distance between the eigenvalues in units of the mean separation $\Delta =2\pi/n$ between neighbouring  eigenvalues of $U_n$, the eigenvalue correlations of $A_{n,\tau}$ near the unit circle have exactly the same functional form in the limit $n\to\infty$ as those of $J_{n,\gamma}$, provided one identifies $(1+\tau)/(1-\tau)$ with $g(x)$, see \cite{FS03}.

\section{Absolute squares of zeroes and their extremes}
\label{Sec:2}

Throughout the rest of this paper, $\theta(q)=1$ if $q>0$ and $\theta(q)=0$ if $q\le 0$.

\subsection{Probability generating functional of absolute squares and their counting function}
\label{Sec2.1}

Denote by $z_1, \ldots, z_n$ the eigenvalues of the matrix $A_{n,\tau}$ \eqref{A} and consider the set of their absolute squares,
\begin{align}\label{AS}
\mathcal{Q}_{n,\tau} = \left\{|z_j|^2: \,\, j=1, \ldots, n \right\}.
\end{align}
As $U_n$ is chosen at random from the unitary group $U(n)$, this is a point process in the interval $[0,1]$. Its probability law is determined by the probability generating functional (p.g.fl)
\begin{align}\label{PGF2mr}
G_{n, \tau} [h] = {\E}_{U(n)} \big\{ \scaleobj{.85}{\prod\nolimits_{q\in \mathcal{Q}_{n,\tau}}} \! \left[1+h(q)\right]\big\},
\end{align}
where $\E_{U(n)}$ is the average over the Haar measure on $U(n)$.

\begin{theorem} \label{thm:7}
Assume that the test function $h (q)$ satisfies the assumptions of Theorem \ref{Thm:M}. Then $G_{n, \tau} [h]$  converges, in the scaling limit $n\to \infty$, $n\tau=t>0$, to
\begin{align}\label{limPGFmr}
G_t[h] = 1+ \sum_{k=1}^{\infty} (-1)^k \int_0^1 \ldots \int_0^1 \frac{e^{t\left(1- \frac{1}{q_1 \cdots q_k} \right)}}{q_1 \cdots q_k}\, \prod_{j=1}^k \frac{q_1 \cdots q_j}{1- q_1 \cdots q_j} \, dh(q_1) \ldots dh(q_k) \, .
\end{align}
\end{theorem}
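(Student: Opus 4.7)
The plan is to compute $G_{n,\tau}[h]$ from the closed-form finite-$n$ joint eigenvalue density of $A_{n,\tau}$ derived in \cite{F00}, expand it as a series of explicit integrals, and pass to the scaling limit $n\tau=t$ term by term. On $\mathbb{D}^n$ the joint density is proportional to $\prod_{i<j}|z_i-z_j|^2\,\delta\!\big(\prod_j|z_j|^2-\tau\big)\prod_j\mathbf{1}_{|z_j|<1}$, the delta encoding the conservation law $|\det A_{n,\tau}|^2=\tau$. Writing $z_j=\sqrt{q_j}\,e^{i\phi_j}$ and integrating out the phases by orthogonality of $\{e^{ik\phi}\}$ on the unit circle reduces the squared Vandermonde $|\det[z_i^{k-1}]|^2$ to the permanent $\sum_{\sigma\in S_n}\prod_i q_i^{\sigma(i)-1}$, giving a tractable joint density for the absolute squares supported on $\{q_1\cdots q_n=\tau\}\cap(0,1)^n$.

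Because $h(1)=0$ and $h$ has bounded variation, the identity $h(q_j)=-\int_0^1\mathbf{1}_{s>q_j}\,dh(s)$ combined with expansion of the product and exchangeability of the eigenvalues produces the finite-$n$ representation
$$G_{n,\tau}[h]=\sum_{k=0}^n(-1)^k\binom{n}{k}\int_{[0,1]^k}\!\!F_k^{(n,\tau)}(s_1,\ldots,s_k)\,dh(s_1)\cdots dh(s_k),$$
where $F_k^{(n,\tau)}(s_1,\ldots,s_k)=\Prob\{|z_1|^2<s_1,\ldots,|z_k|^2<s_k\}$ is computed from the permanent density of the previous paragraph once the $\delta$-constraint is resolved by a Mellin (contour) representation. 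Two mechanisms then drive the scaling limit: (i) the $n-k$ eigenvalues near the unit circle concentrate with conditional product $\tau/(q_1\cdots q_k)$, and a Laplace/large-deviations evaluation of their marginal contribution produces the exponential factor $\exp[\,t\,(1-1/(q_1\cdots q_k))]$; (ii) the sum over permutations in the permanent resums, via the geometric identity $Q_j/(1-Q_j)=\sum_{m\ge 1}Q_j^m$ with $Q_j=q_1\cdots q_j$, into the product $\prod_{j=1}^k Q_j/(1-Q_j)$. Combining the two yields the asymmetric kernel in \eqref{limPGFmr}; since $dh^{\otimes k}$ is symmetric on $[0,1]^k$, only the symmetrization of that kernel contributes to the integral, and the $k=1$ case matches the formula for $\mu_q(a)$ in \eqref{muqa}, providing a consistency check. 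Interchange of limit and summation is justified by dominated convergence, the tail being controlled by $F_k^{(n,\tau)}\le 1$ together with the integrability condition \eqref{cond}.

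The principal obstacle is the scaling-limit analysis itself. Since the joint density of absolute squares is a \emph{permanent} rather than a determinant, standard determinantal/Fredholm asymptotics are unavailable, and the $\delta$-constraint couples the small-$q_j$ outlier contributions to the bulk of eigenvalues near the unit circle in a non-trivial way. Extracting simultaneously the exponential factor, via a careful large-deviations evaluation on the $n-k$ bulk variables, and the explicit product structure $\prod_j Q_j/(1-Q_j)$, via a combinatorial resummation of the permanent monomials into a partition generating function, in a form that cleanly matches the asymmetric kernel of \eqref{limPGFmr}, is the technical heart of the proof.
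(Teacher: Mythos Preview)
Your proposal is a sketch that starts at the right place (the closed-form joint density with the $\delta$-constraint, Mellin representation) and aims at the right target, but it takes a detour the paper avoids and leaves the essential step undone.

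The paper does \emph{not} integrate out the angles first to produce a permanent. Instead, it takes the Mellin transform in $\tau$ of $f_n(\tau)=\theta(1-\tau)(1-\tau)^{n-1}G_{n,\tau}[h]$ directly and applies the Andr\'eief identity to the two Vandermonde determinants in $\prod_{i<j}|z_i-z_j|^2$. Orthogonality of $\{e^{ik\phi}\}$ then makes the resulting Gram determinant diagonal, giving the exact factorization
\[
\mathcal{M}[f_n(\tau);s]=B(n,s)\prod_{k=1}^n\Big(1-\int_0^1 q^{k+s-1}\,dh(q)\Big).
\]
From here the scaling limit is elementary: $n^sB(n,s)\to\Gamma(s)$, the product converges by \eqref{cond}, one expands the infinite product and resums the geometric series to get $\prod_j Q_j/(1-Q_j)$, and termwise Mellin inversion via $\mathcal{M}[e^{-t/Q};s]=Q^s\Gamma(s)$ produces the exponential factor $e^{t(1-1/Q)}$ after multiplying back by $e^t$. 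There is no ``large-deviations on the bulk'' step and no combinatorial resummation of a permanent; both the exponential and the product structure fall out of the Mellin machinery. Your proposed route would have to rediscover this factorization the hard way.

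There is also a concrete gap in your dominated-convergence argument. You bound the $k$-th term by $\binom{n}{k}\,F_k^{(n,\tau)}\,\int|dh|^{\otimes k}$ and then invoke $F_k^{(n,\tau)}\le 1$; but $\binom{n}{k}$ diverges with $n$, so this majorant is not uniform in $n$ and cannot justify the interchange. What you actually need is a bound on $\binom{n}{k}F_k^{(n,\tau)}$, i.e.\ on the integrated $k$-point correlation function, that is uniform in $n$ and summable in $k$ against $\int|dh|^{\otimes k}$. In the paper this issue never arises because the passage to the limit is carried out at the level of the Mellin transform, where the factorized product is bounded uniformly by the convergent product $\prod_{k\ge 0}\big(1+\int_0^1 q^k\,dV_h(q)\big)$ thanks to \eqref{cond}.
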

\begin{proof}
The Haar measure of $U(n)$ induces a probability distribution on the set of eigenvalues of $A_{n,\tau}$. Correspondingly,
\begin{align}\label{pgfl}
G_{n, \tau} [h]  = \int \ldots \int
p_n (z_1, \ldots, z_n) \prod_{k=1}^n  \! \left[1+h (|z_j|^2)\right] \prod_{k=1}^n d^2 z_k\, ,
\end{align}
where $p_n (z_1, \ldots, z_n)$ is the joint eigenvalue density which is known in the closed form \cite{F00,Kozhan07},
\begin{align}\label{eq:-2}
p_n(z_1, \ldots, z_n) = \frac{1}{n\pi^n (1-\tau)^{n-1}}\,   \delta \Big(\tau - \prod_{k=1}^n |z_k|^2\Big)
\!\!
\prod_{1\le i < j \le n} |z_j-z_i|^2\,  \prod_{k=1}^n  \theta (1-|z_k|)  \, .
\end{align}
The $\delta$-function in \eqref{eq:-2} can be handled by using a suitable integral transform in variable $\tau$. We use the Mellin transform
\begin{align*}
{\mathcal M} \left[ f( \tau); s  \right] = \int_0^{\infty}s^{\tau-1}  f(\tau)  \, d\tau \, .
\end{align*}
To this end we consider the sequence of functions of variable $\tau$:
\begin{align}\label{FPGF}
f_n(\tau) = \theta (1-\tau) (1-\tau)^{n-1} G_{n, \tau} [h] \, .
\end{align}
In the scaling limit \eqref{sl}, $G_{n,\tau}[h] = e^{-t}f_n\left(\frac{t}{n}\right)(1+o(1))$. The limiting form of $f_n\left(\frac{t}{n}\right)$ can be obtained from that of its Mellin transform,
\begin{align}\label{MTS}
{\mathcal M} \left[ f_n\left( \frac{t}{n}\right);s  \right] =
n^s{\mathcal M} \left[ f_n(\tau);s  \right] \, .
\end{align}
The Mellin transform on the r.h.s. can be found with help of the Andr\'eief integration formula \cite{Forrester_A}
\begin{align}\label{AI}
\int \ldots \int \det [\phi_j(z_i)]_{i,j=1}^n \det [\psi_j(z_i)]_{i,j=1}^n \prod_{k=1}^n d^2 z_k =n! \det \left[\int \phi_i(z) \psi_j(z) d^2 z \right]_{i,j=1}^n,
\end{align}
for a similar calculation see Corollary  2.2 in \cite{Dubach18}. We find that
\begin{align*}
{\mathcal M} \left[ f_n(\tau);s  \right]
  =
 (n-1)! \prod_{k=1}^{n} \int_0^1 (1+h (q))q^{k+s-2} dq\, .
\end{align*}
By our assumptions, $h(q)$ is bounded. Hence, all integrals in the above product are convergent for every $s$ in the half-plane $\Re s>0$ which is the characteristic strip of the Mellin transform in \eqref{MTS}. After first expanding the product of integrals into a sum as
\begin{align*}
\MoveEqLeft
\prod_{k=1}^{n} \int_0^1 (1+h (q))q^{k+s-2} dq =
\int_0^1 \ldots \int_0^1 \left[ 1+\sum_i h (q_i) + \sum_{i<j} h (q_i) h (q_j)   \right.
\\
&  \left. + \sum_{i<j<k} h (q_i) h (q_j) h(q_k) + \ldots
 + h (q_1)h (q_2)\cdots h (q_n)
 \right] q_1^{s-1} q_2^{s} \cdots q_n^{n+s-2} dq_1 \ldots dq_n\,
\end{align*}
and then using the relations
\begin{align*}
\int_0^1 q^{k+s-2} dq =\frac{1}{k+s-1}\quad \text{and} \quad
\int_0^1  h (q) \, q^{k+s-2}\, dq & = -\frac{1}{k+s-1}\int_0^1 q^{k+s-1} dh (q)\,
\end{align*}
in each term of the sum one can finally convert the resulting sum back into a product, obtaining
\begin{align}\label{MTPGF}
{\mathcal M} \left[ f_n(\tau);s  \right] &=
B(n,s) \prod_{k=1}^{n} \left[1- \int_0^1q^{k+s-1} dh (q) \right]
\end{align}
with the help of the beta function.
The product on the r.h.s. is convergent in the limit $n\to\infty$ absolutely and uniformly in any region in the complex $s$-plane where the series
$
\sum_k\int_0^1q^{k+s-1} dh (q)
$
is convergent absolutely and uniformly. Since for every $s$ in the half-plane $\Re s> 0$
\begin{align}\label{sf}
\sum_{k=1}^{\infty} \left| \int_0^1q^{k+s-1} dh (q) \right| \le \sum_{k=1}^{\infty} \int_0^1\left| q^{k+s-1} \right| dV_{h} (q)\le \sum_{k=1}^{\infty} \int_0^1q^{k-1} dV_{h} (q)  = \int_0^1\frac{dV_{h} (q)}{1-q} \, ,
\end{align}
we have the desired convergence by \eqref{cond}.
Furthermore,
\begin{align}\label{PGFMT}
\lim_{n\to\infty} {\mathcal M} \left[ f_n\left(\frac{t}{n}\right);s  \right] = \Gamma (s)\, \prod_{k=0}^{\infty} \left[1-\int_0^1 q^{k+s} dh (q) \right]
\end{align}
uniformly in $s$ on compact sets in the right half-plane. By interchanging the operations of taking the limit and integration in the Mellin transform inversion formula \begin{align*}
f_n\left( \frac{t}{n}\right) = \frac{1}{2\pi i } \int^{c+i\infty}_{c-i\infty}  {\mathcal M} \left[ f_n\left( \frac{t}{n}\right);s  \right] \frac{ds}{t^{s}}
\end{align*}
where the integration is along the vertical line $\Re s= c$ inside the characteristic strip,
we can recover  $f_n\left(\frac{t}{n}\right)$ in the scaling limit \eqref{sl} by finding the inverse Mellin transform of the expression on the r.h.s. in \eqref{PGFMT}.    The infinite product in \eqref{PGFMT} is absolutely convergent, and, hence, it may be rearranged as a series,
 \begin{align*}
 \MoveEqLeft
 1-\sum_{0\le k} \int_0^1 q^{k+s} dh (q) + \sum_{0\le k<j} \int_0^1q^{j+s} dh (q) \int_0^1q^{k+s} dh (q) \\ &-  \sum_{0\le k<j<l} \int_0^1q^{j+s} dh(q) \int_0^1q^{k+s} dh (q) \int_0^1q^{l+s} dh (q) +\ldots ,
 \end{align*}
On resumming the arising geometric series in the above expansion, one gets
 \begin{align*}
\prod_{k=0}^{\infty}\! \left[1\!-\!\int_0^1 q^{k+s} dh (q) \right] = {}&
 1 \!-\! \int_0^1 \frac{q_1^s}{1-q_1}  dh(q_1) \!+\! \int_0^1\!\int_0^1 \frac{(q_1q_2)^s}{1-q_1q_2}\frac{q_1}{1-q_1}  dh(q_1)dh(q_2)\\
 &{}  -\int_0^1\!\int_0^1\!\int_0^1  \!\frac{(q_1q_2q_3)^s}{1-q_1q_2q_3}\frac{q_1q_2}{1-q_1q_2}\frac{q_1}{1-q_1}  dh(q_1)dh(q_2)dh(q_2) + \ldots .
  \end{align*}
Now, recalling that
\begin{align*}
  {\mathcal M} \left[  \exp \left( -\frac{t}{q^{k}} \right); s\right] =  q^{ks} \, \Gamma (s), \quad \Re s>0 \, ,
\end{align*}
we easily Mellin-invert the r.h.s in \eqref{PGFMT}. This yields
\begin{align*}
\lim_{n\to\infty} f_n\left(\frac{t}{n}\right) = e^{-t} + \sum_{k=1}^{\infty} (-1)^k\int_0^1 \ldots \int_0^1 \frac{e^{-\frac{t}{q_1 \cdots q_k}}}{q_1 \cdots q_k}\, \prod_{j=1}^k \frac{q_1 \cdots q_j}{1- q_1 \cdots q_j} \, dh (q_1) \ldots dh (q_k)
\end{align*}
Finally, on multiplying this equation through by $e^{t}$ to compensate for the factor $(1-\tau)^{n-1}$ in the definition of $f_n(\tau)$, we establish the convergence of $G_{n,\, t/n} [h]$ to $G_t[h]$ given by \eqref{limPGFmr}. This complete our proof of Theorem \ref{thm:7} .
\end{proof}

Consider now the counting function of absolute squares of the eigenvalues of $A_{n,\tau}$
\begin{align}\label{N_tq}
N_{n, \tau}(q) = \card \left( \mathcal{Q}_{n,\tau}\cap [0,q] \right).
\end{align}
As an immediate corollary of Theorem \ref{thm:7} we have convergence in distribution.
\begin{corollary}
\label{prop:9}
In the scaling limit $n\to\infty$, $n\tau=t>0$,
\begin{align}\label{fm8}
\lim_{n\to\infty} {\E}_{U(n)} \big\{(1+x)^{N_{n, {t}/{n}}(q)}\big\} = \sum_{k=0}^{\infty}  \,
\frac{q^{\binom k 2}\, e^{t\left(1-\frac{1}{q^{k}}\right)}     }{(q;q)_k }\,   x^k, \quad \quad q\in (0,1).
\end{align}
\end{corollary}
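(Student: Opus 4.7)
The plan is to apply Theorem \ref{thm:7} to the step-function test function
\[
h_{q}(q') = x\, \theta(q - q')\, , \qquad q' \in [0,1],
\]
which equals $x$ on $[0,q]$ and vanishes on $(q,1]$. For this choice, each factor $1 + h_q(q_j')$ in the product $\prod_{q_j'\in \mathcal{Q}_{n,\tau}} [1+ h_q(q_j')]$ equals $1+x$ whenever $q_j' \le q$ and equals $1$ otherwise. Hence
\[
\prod\nolimits_{q_j' \in \mathcal{Q}_{n,\tau}} \!\left[1+ h_q(q_j')\right] = (1+x)^{N_{n,\tau}(q)}\, ,
\]
and the left-hand side of \eqref{fm8} coincides with $\lim_{n\to\infty} G_{n,t/n}[h_q]$.

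Before invoking Theorem \ref{thm:7} I would verify its hypotheses for $h_q$. The condition $h_q(1)=0$ is immediate. The total variation $V_{h_q}(q')$ is a pure jump of size $|x|$ located at $q'=q<1$, so
\[
\int_0^1 \frac{dV_{h_q}(q')}{1-q'} = \frac{|x|}{1-q} < \infty\, .
\]
Thus the test function satisfies both requirements in \eqref{cond}.

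Next I would substitute the associated Stieltjes measure $dh_q = -x\, \delta_q$ into the series representation \eqref{limPGFmr} for $G_t[h]$. Each of the $k$ iterated integrals collapses to evaluation at $q_j = q$ and contributes a factor of $(-x)$, so the $k$th term becomes
\[
(-1)^k(-x)^k\, \frac{e^{t(1-q^{-k})}}{q^k}\, \prod_{j=1}^{k}\frac{q^{j}}{1-q^{j}}
\;=\; x^k\, \frac{q^{k(k+1)/2 - k}}{(q;q)_k}\, e^{t(1-q^{-k})}
\;=\; x^k\, \frac{q^{\binom k 2}}{(q;q)_k}\, e^{t(1-q^{-k})}\, ,
\]
using $\prod_{j=1}^k q^j = q^{k(k+1)/2}$ and $\prod_{j=1}^k (1-q^j) = (q;q)_k$. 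Summing over $k\ge 0$ (and absorbing the $k=0$ term $1$) yields exactly the right-hand side of \eqref{fm8}.

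The argument has no real obstacle: the entire content of the corollary is packaged in Theorem \ref{thm:7}, and the only potentially delicate point is that $h_q$ has a jump discontinuity, but since \eqref{cond} is phrased in terms of bounded variation and a Stieltjes integral this is accommodated automatically. The concluding assertion that the $\mathcal{M}[\,\cdot\,;s]$ inversion step in the proof of Theorem \ref{thm:7} respects this class of test functions is already built into that theorem, so no additional analytic work beyond the above evaluation is required.
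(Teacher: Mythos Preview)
Your proposal is correct and follows essentially the same approach as the paper: both apply Theorem \ref{thm:7} to the test function $h = x\,\mathbf{1}_{[0,q]}$, then evaluate the iterated Stieltjes integrals in \eqref{limPGFmr} using that $dh$ is a point mass $-x\,\delta_q$. You are more explicit than the paper in verifying the hypotheses \eqref{cond} and in spelling out the algebra $\prod_{j=1}^k q^j/(1-q^j) = q^{k(k+1)/2}/(q;q)_k$, but the argument is otherwise identical.
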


\begin{proof}  Setting $h$ in \eqref{PGF2mr} to be a multiple of the 0-1 indicator function $\mathbf{1}_{[0,q]}$ of interval $[0,q]$,
\begin{align*}
G_{n,\tau}[x \mathbf{1}_{[0,q]}]={\E}_{U(n)} \big\{(1+x)^{N_{n,\tau}(q)}\big\}.
\end{align*}
Therefore, by Theorem \ref{thm:7}, ${\E}_{U(n)} \{(1+x)^{N_{n,\tau}(q)}\}$ converges
 to the infinite series on the r.h.s in \eqref{limPGFmr} evaluated at $h=x\mathbf{1}_{[0,q]}$. By repeatedly making use of $\int_0^1 f(q^\prime) d\mathbf{1}_{[0,q]} (q^\prime)= - f(q)$ in this series, one transforms it to \eqref{fm8}.
\end{proof}

\begin{proof}[Proof of Theorems \ref{Thm:M}  and \ref{Thm2}]
Theorem \ref{thm:7}  implies that the point process of absolute squares of eigenvalues of $A_{n,\tau}$ converges (in the sense of finite-dimensional distributions) in the scaling limit \eqref{sl}  to the point process defined by the generating functional \eqref{limPGFmr}. As Forrester's and Ipsen's work \cite{FI2019} implies that the limiting process is that of the absolute squares of the Gaussian power series \eqref{phi(z)} conditioned by the event $|\varphi(0)|^2=t$, this proves Theorems \ref{Thm:M} and \ref{Thm2} for $\varphi(0)\not=0$. In Remark \ref{rem3} we proved that equations \eqref{limPGFmr1} and \eqref{fm8i} also hold for $\varphi (0)=0$. This completes our proof of Theorems \ref{Thm:M}  and \ref{Thm2}.
\end{proof}

\subsection{Link to $q$-binomial theorem}
\label{Sec2.2}
Our proof of Theorem \ref{thm:7} obscures a link of our technique to $q$-binomial expansions. Here we  provide an alternative proof of Corollary \ref{prop:9} which makes this link more apparent.

The following expansion \cite{NIST:DLMF}
 \begin{align}\label{q-bin-thm}
\prod_{k=0}^{n-1} (1+x q^{k}) = \sum_{k=0}^{n}
\frac{(q;q)_n}{(q;q)_k (q;q)_{n-k}}\, q^{\binom k 2} x^k
\end{align}
is known as the $q$-binomial theorem. In the limit $q\to1^-$ the $q$-binomial coefficients
 \begin{align} \label{q-bin-c}
\frac{(q;q)_n}{(q;q)_k (q;q)_{n-k}} = \prod_{j=1}^k \frac{1-q^{n-j+1}}{1-q^j}, \quad k=1, \ldots, n,
 \end{align}
converge to the binomial coefficients and \eqref{q-bin-thm} transforms into the binomial theorem. On passing to the limit $n\to \infty$ in \eqref{q-bin-thm}, one obtains one of Euler's partition identities
 \begin{align}\label{euler_p}
\prod_{k=0}^{\infty} (1+x q^k) = \sum_{k=0}^{\infty} \,
\frac{q^{\binom k 2}}{(q;q)_k}\,  x^k.
\end{align}

By making use of the $q$-binomial theorem we obtain

\begin{proposition}
For every $q\in(0,1)$
\begin{align}\label{DNntau}
{\E}_{U(n)} \big\{\!(1+x)^{N_{n,\tau}(q)} \big\}= \sum_{k=0}^{n} \, \theta(q^k-\tau)
\frac{ (q;q)_n }{(q;q)_k (q;q)_{n-k}} \left( \frac{1 - \frac{\tau}{q^k} }{1-\tau} \right)^{\!\!n-1} \!\! \,
q^{\binom k 2} \, x^k\, .
\end{align}
\end{proposition}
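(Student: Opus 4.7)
The plan is to specialize the Mellin transform machinery developed in the proof of Theorem \ref{thm:7} to the indicator test function $h(q')=x\,\mathbf{1}_{[0,q]}(q')$, for which $G_{n,\tau}[h]=\E_{U(n)}\{(1+x)^{N_{n,\tau}(q)}\}$ by the same argument used in the proof of Corollary \ref{prop:9}. Keeping $n$ finite throughout, I then invoke the $q$-binomial theorem \eqref{q-bin-thm} to collapse the product of $n$ factors arising in ${\mathcal M}[f_n(\tau);s]$ into a finite sum of terms that can each be Mellin-inverted in closed form.

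With this choice of $h$, the building-block integral appearing in the proof of Theorem \ref{thm:7} becomes elementary:
\[
\int_0^1 (1+h(q'))(q')^{k+s-2}\,dq' = \frac{1+xq^{k+s-1}}{k+s-1}.
\]
Combining this with the identity $(n-1)!=B(n,s)\prod_{k=1}^n(k+s-1)$ reduces the Mellin transform of $f_n(\tau)=\theta(1-\tau)(1-\tau)^{n-1}G_{n,\tau}[h]$ to
\[
{\mathcal M}[f_n(\tau);s] = B(n,s)\prod_{k=1}^{n}(1+xq^{k+s-1}) = B(n,s)\prod_{j=0}^{n-1}\bigl(1+(xq^s)\,q^j\bigr).
\]

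Next, I apply the $q$-binomial theorem \eqref{q-bin-thm} with $xq^s$ in place of $x$, obtaining
\[
{\mathcal M}[f_n(\tau);s] = \sum_{k=0}^{n}\frac{(q;q)_n}{(q;q)_k(q;q)_{n-k}}\,q^{\binom k 2}\,x^k\,B(n,s)\,q^{ks}.
\]
Each factor $B(n,s)q^{ks}$ is inverted using the standard identity ${\mathcal M}[\theta(1-\tau)(1-\tau)^{n-1};s]=B(n,s)$ together with the Mellin scaling rule ${\mathcal M}[g(\tau/a);s]=a^{s}{\mathcal M}[g;s]$ applied with $a=q^k$, which yields
\[
B(n,s)\,q^{ks} = {\mathcal M}\!\left[\theta(q^k-\tau)\bigl(1-\tau/q^k\bigr)^{n-1};\,s\right].
\]
Summing the resulting inverse transforms for $f_n(\tau)$ and dividing through by $(1-\tau)^{n-1}$ (with $\theta(1-\tau)=1$ since $\tau\in(0,1)$) delivers the claimed formula \eqref{DNntau}.

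The only conceivable obstacle is the justification of term-by-term Mellin inversion, but this is immediate here: the sum is finite ($n+1$ terms), each summand is the Mellin transform of an explicit bounded function supported on $[0,q^k]$ and analytic in the common right half-plane $\Re s>0$, so linearity of the inverse transform applies with no further analytic subtleties. The essential content of the proposition is therefore the clean interplay between the Beta-function structure of ${\mathcal M}[f_n(\tau);s]$ and the $q$-binomial identity, which together convert an $n$-fold product into the advertised degree-$n$ polynomial in $x$ with $q$-binomial coefficients.
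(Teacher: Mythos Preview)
Your proof is correct and follows essentially the same approach as the paper: compute the Mellin transform of $f_n(\tau)=\theta(1-\tau)(1-\tau)^{n-1}\E_{U(n)}\{(1+x)^{N_{n,\tau}(q)}\}$ by specializing \eqref{MTPGF} to $h=x\mathbf{1}_{[0,q]}$, apply the $q$-binomial theorem to the resulting product $B(n,s)\prod_{j=0}^{n-1}(1+xq^{j+s})$, and Mellin-invert term by term using $B(n,s)q^{ks}={\mathcal M}[\theta(q^k-\tau)(1-\tau/q^k)^{n-1};s]$. Your write-up spells out a couple of steps (the explicit evaluation of the single-factor integral and the finiteness justification for term-by-term inversion) a bit more than the paper does, but the argument is the same.
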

\begin{proof}
Consider function $f_n(\tau) = \theta (1-\tau) (1-\tau)^{n-1} {\E}_{U(n)} \{(1+x)^{N_{n,\tau}(q)}\}$.
Its Mellin transform was calculated above and is given by \eqref{MTPGF} with $h =x\mathbf{1}_{[0,q]}$. By the $q$-binomial theorem,
\begin{align}\label{MTN}
{\mathcal M} \left[ f_n(\tau);s  \right] &=
B(n,s) \prod_{k=0}^{n-1} \left(1+x q^{k+s} \right) = B(n,s)\sum_{k=0}^{n}
\frac{(q;q)_n}{(q;q)_k (q;q)_{n-k}} \,  q^{\binom k 2}(xq^s)^k\, .
\end{align}
The Mellin transform in \eqref{MTN} can be inverted by noticing that for $q\in (0,1)$
\begin{align*}
{\mathcal M} \left[ \theta (q-\tau)\left(1-\frac{\tau}{q}\right)^{\!\!n-1}; s  \right] = q^s B(n,s)\, ,
\end{align*}
and we arrive at \eqref{DNntau}.
\end{proof}

To pass to the scaling limit $n\to\infty$, $n\tau=t$ in expression \eqref{DNntau}, we first note that
\begin{align}\label{22a}
{\E}_{U(n)} \big\{\!(1+x)^{N_{n,\tau}(q)}\big\} = \sum_{k=0}^{m(\tau,q)}
\prod_{j=1}^k \frac{1-q^{n-j+1} }{1-q^{j}}\!\left( \frac{1 - \frac{\tau}{q^k} }{1-\tau} \right)^{\!\!n-1} \!\! \!\!q^{\frac{k(k-1)}{2}} \, x^k,
\end{align}
where $m(\tau,q)$ is the (unique) integer $m$ such that $\tau^{\frac{1}{m}} \le q < \tau^{\frac{1}{m+1}}$.
Now,   since $n\tau=t$,  we have that $m(\tau, q)\sim \log n /\log (1/q)$ and, also,
\begin{align*}
\prod_{j=1}^k \frac{1-q^{n-j+1} }{1-q^{j}}\!\left( \frac{1 - \frac{\tau}{q^k} }{1-\tau} \right)^{\!\!n-1} \!\! \!\!q^{\binom k 2} \, x^k \sim
\frac{q^{\binom k 2}\, e^{t\left(1-\frac{1}{q^{k}}\right)}     }{(q;q)_k }\,   x^k\, .
\end{align*}
Since the series on the r.h.s. in \eqref{22a} is bounded by the convergent series
$
\sum_{k=0}^{\infty}   \frac{1 }{(q;q)_k }q^{\binom k 2}  x^k
$
we may pass to the scaling limit in \eqref{22a} and recover \eqref{fm8}.

\subsection{Marginal probability distributions of ordered absolute squares}
\label{Sec2.3}
Equation \eqref{fm8} determines the limiting distribution of  $N_{n, \tau}(q)$ in the scaling limit $n\to\infty$, $n\tau=t$:
\begin{align}\label{fm10}
\lim_{n\to\infty}\Prob\! \left\{ N_{n,\, t/n}(q)=m \right\} =  \delta_{m,0} + \sum_{k=1}^{\infty} (-1)^{k+m} \frac{(k)_m}{m!}\,  \frac{ q^{\binom k 2} \, e^{t\left(1-\frac{1}{q^{k}}\right)} }{(q;q)_k}, \quad m\ge 0,
\end{align}
where $(k)_m $ is the falling factorial: $(k)_0=1$ and $(k)_m=k(k-1)\cdots (k-m+1)$. In turn, this allows us to obtain the limiting distribution of the $m$-th smallest absolute square of the eigenvalues of $A_{n,\tau}$ which we denote by $q_{n,\tau}^{{(m)}}$.
\begin{proposition}
\label{prop:10} For every $m\ge 1$, the distribution of $q_{n,\tau}^{{(m)}} $  in the scaling limit $n\to\infty$, $n\tau =t>0$, is given by
\begin{align}\label{fm12a}
\lim_{n\to\infty} \Prob \big\{q_{n, \,t/n}^{(m)} > q \big\} = 1+  \sum_{k=m}^{\infty} (-1)^{k+m-1}  \binom{k-1}{m-1} \frac{q^{\binom k 2}\, e^{t\left(1-\frac{1}{q^{k}}\right)}  }{(q;q)_k}.
\end{align}
\end{proposition}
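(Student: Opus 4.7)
The plan is to reduce the question about the $m$-th order statistic to a question about the counting function, and then to sum the limiting probability mass function already delivered by Corollary~\ref{prop:9} (equivalently, \eqref{fm10}). The starting observation is the elementary identity
\begin{align*}
\{q_{n,\tau}^{(m)}>q\}=\{N_{n,\tau}(q)\le m-1\},
\end{align*}
so that $\Prob\{q_{n,\tau}^{(m)}>q\}=\sum_{j=0}^{m-1}\Prob\{N_{n,\tau}(q)=j\}$. Since $m$ is fixed this is a finite sum, and one can take the limit $n\to\infty$ with $n\tau=t$ inside, applying \eqref{fm10} to each term individually.

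Writing $(k)_j/j!=\binom{k}{j}$ (which automatically vanishes for $j>k$) and interchanging the finite $j$-sum with the $k$-series, one arrives at
\begin{align*}
\lim_{n\to\infty}\Prob\{q_{n,t/n}^{(m)}>q\}=1+\sum_{k=1}^{\infty}(-1)^k\,\frac{q^{\binom k 2}e^{t(1-1/q^k)}}{(q;q)_k}\,\sigma_{k,m},\quad \sigma_{k,m}:=\sum_{j=0}^{m-1}(-1)^j\binom{k}{j}.
\end{align*}
The swap is legitimate because for fixed $q\in(0,1)$ the $k$-series is absolutely convergent: the super-exponential decay of $q^{\binom k2}/(q;q)_k$ easily dominates the polynomial growth of $(k)_j$ in $k$ (with $j\le m-1$), while $e^{t(1-1/q^k)}\le e^t$.

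The combinatorial heart of the argument is then the classical alternating partial-sum identity
\begin{align*}
\sigma_{k,m}=(-1)^{m-1}\binom{k-1}{m-1},
\end{align*}
which may be proved by induction on $m$ via Pascal's rule $\binom{k}{j}=\binom{k-1}{j}+\binom{k-1}{j-1}$ (the sum telescopes). Substituting this identity into the previous display and observing that $\binom{k-1}{m-1}=0$ whenever $k<m$ immediately re-indexes the series to start at $k=m$ and yields exactly \eqref{fm12a}. There is no genuine obstacle in this argument; the only bookkeeping items are the justification of the sum interchange and the recollection of the alternating binomial identity, both of which are standard.
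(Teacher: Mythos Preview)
Your proof is correct and follows essentially the same route as the paper: both reduce to the counting function via $\{q_{n,\tau}^{(m)}>q\}=\{N_{n,\tau}(q)\le m-1\}$, apply \eqref{fm10} termwise, and then invoke the alternating partial-sum identity $\sum_{j=0}^{m-1}(-1)^j\binom{k}{j}=(-1)^{m-1}\binom{k-1}{m-1}$ (the paper's \eqref{fm11bb}). Your explicit justification of the sum interchange via absolute convergence is a welcome addition that the paper leaves implicit.
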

\begin{proof} The distribution of $q_{n,\tau}^{{(m)}} $ can be read off from the distribution of the counting function:
\begin{align}\label{fm11b}
\Prob \{ q_{n,\tau}^{{(m)}}  \!>\! q \} = \Prob \{N_{n,\tau}(q)\!=\!0 \} + \Prob\{ N_{n,\tau}(q)\!=\!1 \} \!+\! \ldots \!+\! \Prob\{ N_{n,\tau}(q) \!=\!m\!-\!1 \}.
\end{align}
 It follows from  \eqref{fm10} and \eqref{fm11b} that
\begin{align*}
\lim_{n\to\infty} \Prob \big\{ q_{n,\,t/n}^{{(m)}}  > q \big\}= 1+ \sum_{k=1}^{\infty} (-1)^{k} \,  \sum_{l=0}^{m-1} (-1)^l \, \frac{(k)_l}{l!} \, \frac{q^{\binom k 2}\, e^{t\left(1-\frac{1}{q^{k}} \right)}  }{(q;q)_k}.
\end{align*}
Now, \eqref{fm12a} follows from the identity
\begin{align}\label{fm11bb}
\sum_{l=0}^{m-1} (-1)^l \frac{(k)_l}{l!} =
\begin{cases}
\displaystyle{
(-1)^{m-1}\binom{k-1}{m-1}
}& \text{if }  k\ge m; \\
0 & \text{if }  k=1, \ldots m-1
\end{cases}
\end{align}
which can be proved by induction.
\end{proof}

\begin{figure}[t!]
\includegraphics[width=.45\linewidth]{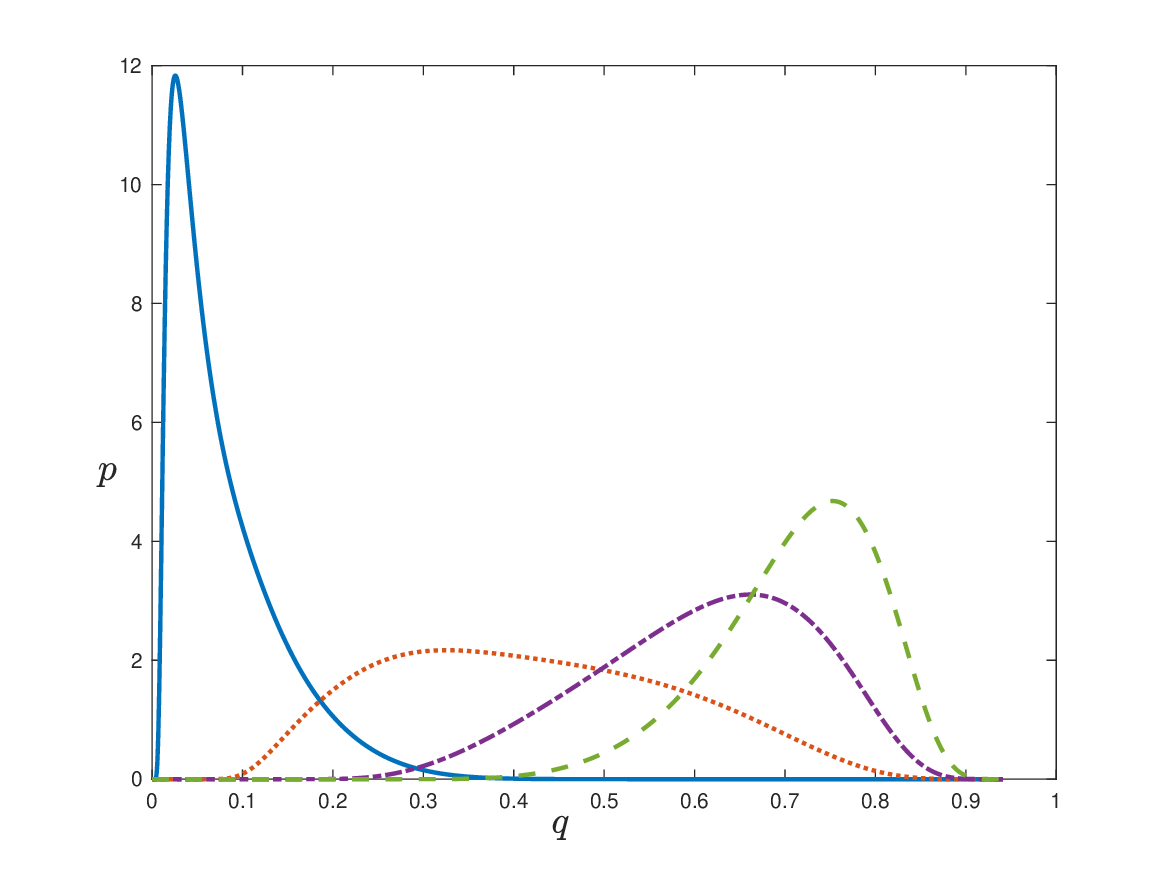}
\includegraphics[width=.45\linewidth]{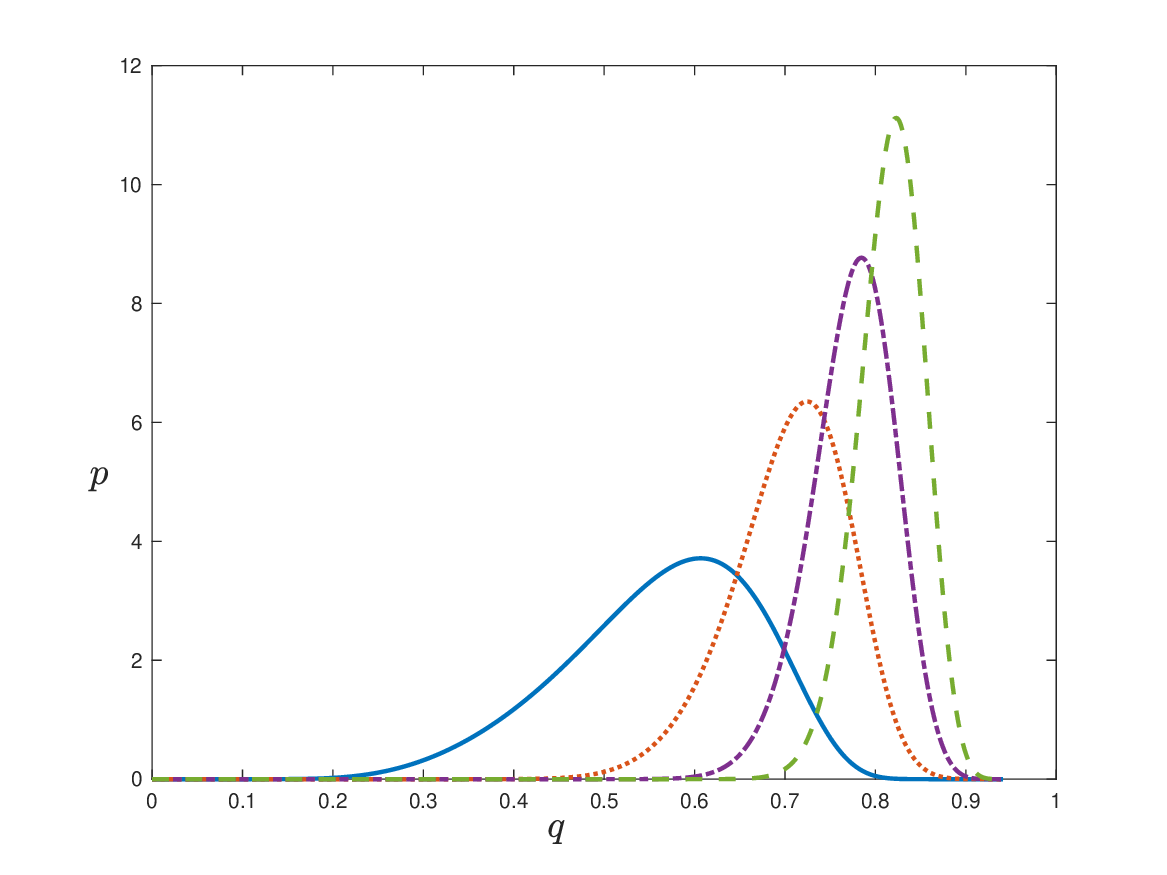}
\caption{Graphs of the probability density of the conditional marginal distribution of the four smallest absolute squares of the zeros of ${\varphi (z)}$ given ${\varphi (0)}=a$:
$q_{{(1)}}$ (solid line), $q_{{(2)}}$ (dotted line), $q_{{(3)}}$ (dash-dotted line) and $q_{{(4)}}$ (dashed line); $|a|^2=0.05$ in the plot on the left and $|a|^2=2$ in the plot on the right.}
\label{Fig:3}
\end{figure}

\begin{remark}
\label{rem:2.5}
 By virtue of Forrester's and Ipsen's work, $q_{n,\tau}^{(m)}$ converges in distribution, in the scaling limit $n\to\infty$, $n\tau=t$,  the $m$-th smallest absolute square $q_{(m)}$ of the zeros of the Gaussian power series ${\varphi (z)}$ conditioned by the event that $|{\varphi (0)}|^2=t$. The probability density of the conditional distribution of $q_{(m)}$ is obtained by differentiation from \eqref{fm12a}. For $m=1$ this function is given by the $q$-series in \eqref{eq:24}, and for $m>1$ a closed form expression for the probability density of $q_{(m)}$  is obtained by multiplying the  $k$-th term of the $q$-series in  \eqref{eq:24} by $(-1)^{m-1}\binom{k-1}{m-1}$. For example, the conditional probability density of $q_{{(2)}}$  given that $\varphi(0)=a$  equals
\begin{align}\label{eq:24_s}
\sum_{k=2}^{\infty} (-1)^{k}\, \frac{(k-1)\,  q^{\binom k 2 -1}\, e^{|a|^2\left(1-\frac{1}{q^{k}}\right)} }{(q;q)_k}
\left[k \left(\frac{|a|^2}{q^{k}} -1 \right) +\sum_{i=1}^k \frac{i}{1-q^{i}}\right].
\end{align}
Fig. \ref{Fig:3} displays graphs of the conditional probability density of the four left-most points of the absolute squares of the zeros of ${\varphi (z)}$ given ${\varphi (0)}=a$ for two values of $|a|^2$.
\end{remark}

\section{Limiting distributions of $\mathcal{Q}_a$}
\label{Sec:3}

Recall that $\mathcal{Q}_a$ is the random set of absolute squares of the zeros of $\varphi_a(z)= a+\sum_{k\ge 1}c_kz^k$. It is useful to think of the probability law of $\mathcal{Q}_a$ as the conditional law of the point process $\mathcal{Q}$  of the absolute squares of the zeros of the GAF ${\varphi (z)}=\sum_{k\ge 0}c_kz^k$ given ${\varphi (0)}=a$,
\begin{align*}
\mathcal{Q}_a = \left. \mathcal{Q}\, \right| \, {\varphi (0)}=a.
\end{align*}
 The p.g.fl of the conditional law is given in terms of an infinite power series in Theorem \ref{Thm:M}. Using this series one can easily obtain the conditional probability distribution of the counting function $N_q=\card(\mathcal{Q} \cap [0,q])$ for every $q\in(0,1)$, see Theorem \ref{Thm2}. However, for a given value of $a$ getting information about the correlation structure of $\mathcal{Q}_a$ looks less straightforward. In this section we will identify various scaling limits in which the probability law of $\mathcal{Q}_a$ takes familiar forms.

\subsection{Limits of small values of $a$} It is evident that on removing the trivial zero $z=0$ from the zero set of $\sum_{k\ge 1}c_kz^k$ one obtains a point process that is statistically indistinguishable from the zero set of the unconditioned power series ${\varphi (z)}$ . The latter point process is well understood.

Our next proposition asserts that the probability law of $\mathcal{Q}_a$ for small values of $a$ can be approximated by that of the point process $\{0\} \cup \mathcal{Q}$, although as usual, some care needs to be taken when dealing with densities, see e.g., Remark \ref{rem:14}.

\begin{proposition}\label{Prop:11}
For every test function $h(q)$ satisfying the assumptions of Theorem \ref{Thm:M},
\begin{align}\label{PGF_Lim_t_0}
\lim_{a\to 0} \E_{\varphi}  \Big\{ {\prod\nolimits_{q\in \mathcal{Q}}} \!(1+h (q)) \Big|   {\varphi (0)}=a \Big\} = (1+ h(0))  {\prod\nolimits_{k\ge 1}} \Big( \! 1- {\int_0^1} q^k dh (q)\!\Big),
\end{align}
where the infinite product on the r.h.s. is the unconditional p.g.fl $\E_{\varphi} \{ \prod\nolimits_{q\in \mathcal{Q}} \!\left(1+h (q)\right)\}$.
\end{proposition}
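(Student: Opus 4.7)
The plan is to start from the series representation of the conditional p.g.fl given by Theorem~\ref{Thm:M} and pass the limit $a\to 0$ inside the series and the $k$-fold integrals via dominated convergence. The $k$-th summand in \eqref{limPGFmr1} carries the factor $e^{|a|^2(1-(q_1\cdots q_k)^{-1})}$; since $q_1\cdots q_k\le 1$ on the domain of integration, this exponent is non-positive for every $a\in\mathbb{C}$, so the exponential is bounded by $1$ uniformly in $a$ and in the $q_i$'s. This gives a natural candidate for a majorant of the $k$-th summand, namely
\begin{align*}
M_k := \int_0^1\!\!\cdots\!\!\int_0^1 \frac{1}{q_1\cdots q_k}\prod_{j=1}^k\frac{q_1\cdots q_j}{1-q_1\cdots q_j}\, dV_h(q_1)\cdots dV_h(q_k),
\end{align*}
obtained by replacing the exponential by $1$ and $dh$ by the total variation $dV_h$.

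The next step will be to verify that $\sum_{k\ge 1} M_k<\infty$. I expect the cleanest route is to apply the algebraic identity stated at the end of Remark~\ref{rem3} with the signed distribution $-V_h$ in place of $h$ (the requirement $(-V_h)(1)=0$ is met by subtracting an additive constant, which does not affect the differential); a straightforward book-keeping of the signs yields
\begin{align*}
1+\sum_{k=1}^\infty M_k = \prod_{k=0}^\infty\Bigl(1+\int_0^1 q^k\, dV_h(q)\Bigr),
\end{align*}
and the infinite product converges because $\sum_{k\ge 0}\int_0^1 q^k\, dV_h(q)\le \int_0^1 dV_h(q)/(1-q)<\infty$ by assumption~\eqref{cond}, cf.\ \eqref{sf}.

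With a summable majorant in hand, dominated convergence applied simultaneously in the $k$-fold integrals and in the sum over $k$ lets one push the limit inside. Since $e^{|a|^2(1-(q_1\cdots q_k)^{-1})}\to 1$ pointwise on $(0,1]^k$ as $a\to 0$, the limit of the conditional p.g.fl equals the series in \eqref{limPGFmr1} with the exponential factor replaced by $1$. By the identity at the end of Remark~\ref{rem3}, that series evaluates to $\prod_{k\ge 0}\bigl(1-\int_0^1 q^k\, dh(q)\bigr)$. The $k=0$ factor simplifies to $1-(h(1)-h(0))=1+h(0)$ thanks to the normalisation $h(1)=0$, and the remaining factors form the unconditional p.g.fl $\E_\varphi\{\prod_{q\in\mathcal Q}(1+h(q))\}$; this gives \eqref{PGF_Lim_t_0}.

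The main technical hurdle is the absolute-convergence step: one has to be slightly careful in invoking the algebraic identity of Remark~\ref{rem3} for a signed measure whose defining function requires a constant shift to satisfy the hypothesis at $q=1$, and in controlling the boundary behaviour of the integrand near $q_i=1$, where $(1-q_1\cdots q_j)^{-1}$ may be singular. Once these points are sorted, the remainder of the argument amounts to two routine applications of dominated convergence.
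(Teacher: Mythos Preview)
Your proposal is correct and follows essentially the same route as the paper: bound the exponential by $1$, replace $dh$ by $dV_h$ to obtain a summable majorant that folds into the convergent product $\prod_{k\ge 0}(1+\int_0^1 q^k\,dV_h)$, apply dominated convergence, and then identify the limiting series with the product via the identity of Remark~\ref{rem3}. The paper simply asserts the majorant identity without the constant-shift remark you raise, but otherwise the arguments coincide.
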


\begin{proof} The conditional p.g.fl on the l.h.s in \eqref{PGF_Lim_t_0} is given by the series  \eqref{limPGFmr1}. It is evident that this series, for every $a\in \mathbb{C}$,  is bounded in absolute value by the converging series
\begin{align*}
1+ \sum_{k=1}^{\infty} \int_0^1 \ldots \int_0^1 \frac{1}{q_1 \cdots q_k}\, \prod_{j=1}^k \frac{q_1 \cdots q_j}{1- q_1 \cdots q_j} \, d V_{h} (q_1) \ldots dV_{h}(q_k)
= \prod_{k=0}^{\infty} \Big(\! 1+ \int_0^1 q^k dV_{h} (q)\!\Big),
\end{align*}
Therefore we may take the limit $a\to 0$ inside the summation and integration in \eqref{limPGFmr1}. The limiting series is
\begin{align*}
\MoveEqLeft[10]
1+ \sum_{k=1}^{\infty} (-1)^k \int_0^1 \ldots \int_0^1 \frac{1}{q_1 \cdots q_k}\, \prod_{j=1}^k \frac{q_1 \cdots q_j}{1- q_1 \cdots q_j} \, d h (q_1) \ldots d h (q_k)
\\ & = \prod_{k=0}^{\infty} \Big( \! 1- \int_0^1 q^k dh (q)\!\Big) = (1+ h(0))
\prod_{k=1}^{\infty} \Big( \! 1- \int_0^1 q^k dh (q)\!\Big),
\end{align*}
as claimed in \eqref{PGF_Lim_t_0}.
\end{proof}

\begin{corollary}
\label{cor_a=0}
\phantom{a}
\begin{itemize}
\item[(a)]
In the limit $a\to 0$, $\left. N_q\, \right| {\varphi (0)}=a$ converges in distribution to $1+N_q$, where
\begin{align}\label{fm10tmr}
\Prob \{N_q\!=\!m \} =  \delta_{m,0}+ \sum_{k=m}^{\infty} (-1)^{k-m} \binom{k}{m} \frac{ q^{k(k+1)/2} }{(q;q)_k}, \quad m\ge 0.
\end{align}
\item[(b)] The conditional joint distribution of the first $m$ left-most points $(q_{(1)}, q_{(2)}, \ldots, q_{(m)})$ of $\mathcal{Q}$ given that ${\varphi (0)}=a$ converges, in the limit $a\to 0$, to that of  $(0, q_{(1)}, \ldots, q_{(m-1)})$ whose marginal distributions are given by
\begin{align}\label{fm12aT}
\Prob \{q_{(m)}\!>\!q \} =
\displaystyle{
1+ \sum_{k=m}^{\infty} (-1)^{k+m-1}  \binom{k-1}{m-1}\, \frac{q^{\frac{k(k+1)}{2}} }{(q;q)_k}
}, \quad q\in (0,1),\,\, m\ge 1.
\end{align}
\end{itemize}
\end{corollary}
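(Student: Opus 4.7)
\smallskip

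\noindent\textbf{Proof proposal for Corollary \ref{cor_a=0}.}
Both parts follow almost directly from Proposition \ref{Prop:11}: the plan is to insert two well chosen test functions $h$ into the limiting p.g.fl and read off the distribution of interest.

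For part (a), I would specialise Proposition \ref{Prop:11} to $h(q')=x\mathbf{1}_{[0,q]}(q')$. Since $h(0)=x$ and $\int_0^1(q')^k\,dh(q')=-xq^k$, the right-hand side of \eqref{PGF_Lim_t_0} collapses to
\[
(1+x)\prod_{k\ge 1}(1+xq^k)=\prod_{k\ge 0}(1+xq^k).
\]
The first equality identifies the limit as the p.g.f.\ of $1+N_q$ (using \eqref{pgfNq}), which gives convergence in distribution since $N_q|\varphi(0)=a$ is integer-valued. To obtain the explicit PMF \eqref{fm10tmr} I would use Euler's identity \eqref{euler_p} (with $x\mapsto xq$) to expand $\prod_{k\ge 1}(1+xq^k)=\sum_{k\ge 0}\frac{q^{k(k+1)/2}}{(q;q)_k}x^k$, and then invert the binomial transform by the substitution $y=1+x$, matching coefficients of $y^m$ in $\sum_k a_k(y-1)^k$.

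For part (b), weak convergence of the p.g.fl in \eqref{PGF_Lim_t_0} is equivalent to weak convergence of the finite-dimensional distributions of $\mathcal{Q}_a$, because the factor $(1+h(0))$ appearing in the limit is precisely the contribution of a deterministic atom at $0$ to the p.g.fl of the superposition $\{0\}\cup\mathcal{Q}$. This at once yields joint convergence of the $m$ leftmost points of $\mathcal{Q}_a$ to $(0,q_{(1)},\dots,q_{(m-1)})$. For the marginal formula \eqref{fm12aT} I would use the standard translation $\Prob\{q_{(m)}>q\}=\sum_{j=0}^{m-1}\Prob\{N_q=j\}$ for the unconditional counting function, insert the PMF established in part (a), and collapse the double sum by means of the combinatorial identity \eqref{fm11bb} already exploited in the proof of Proposition \ref{prop:10}; the algebra is then identical to that proof with $q^{\binom k 2}e^{t(1-q^{-k})}$ replaced by $q^{k(k+1)/2}$.

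The proof is essentially bookkeeping once Proposition \ref{Prop:11} is in hand, so there is no real obstacle. The only mildly delicate points are: (i) justifying that convergence of the p.g.fl implies joint convergence of order statistics, which is standard for point processes on $[0,1]$ whose p.g.fl's determine them uniquely (and here the resulting limit process $\{0\}\cup\mathcal{Q}$ is manifestly well-defined); and (ii) handling the $m=1$ boundary case of \eqref{fm12aT}, where the limiting marginal degenerates to a point mass at $0$ — one should check that the stated $q$-series for $\Prob\{q_{(1)}>q\}$ evaluates to $(q;q)_\infty$, which is the unconditional hole probability and the correct survival function of $q_{(1)}$ in the unconditional process (the $0$-atom contributes to $\Prob\{q_{(1)}=0\}$ via the shift).
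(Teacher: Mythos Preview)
Your proposal is correct and follows essentially the same route as the paper's proof: the paper invokes Proposition~\ref{Prop:11} for the convergence in distribution, derives \eqref{fm10tmr} from \eqref{pgfNq} via Euler's identity \eqref{euler_p}, and obtains \eqref{fm12aT} from \eqref{fm10tmr} together with \eqref{fm11b} and \eqref{fm11bb}, exactly as you outline. Your parenthetical remark in point~(ii) is slightly muddled---formula \eqref{fm12aT} for $m=1$ describes the unconditional $q_{(1)}$, which is not degenerate; the point mass at $0$ you have in mind is the first coordinate of the limiting tuple $(0,q_{(1)},\dots,q_{(m-1)})$, not covered by \eqref{fm12aT}---but this does not affect the argument.
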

\begin{proof}
The convergence in distribution follows from Proposition \ref{Prop:11}. Equation \eqref{fm10tmr} follows from the product representation of the moment generating function of $N_q$ \eqref{pgfNq} via Euler's partition identity \eqref{euler_p}, and \eqref{fm12aT} follows from  \eqref{fm10tmr}, \eqref{fm11b} and \eqref{fm11bb}.
\end{proof}

\begin{remark}
\label{rem:14} Although $q_{(2)}$ given ${\varphi (0)}=a$ converges in distribution to $q_{(1)}$ in the limit of small values of $a$, at the level of densities this convergence holds only on compact sets in $(0,1)$, and not at $q=0$. Indeed, the probability density of $q_{(1)}$,
\begin{align}\label{fm15mr}
p_{q_{(1)}} \!(q)  =
\sum_{k=1}^{\infty}  \frac{(-1)^{k+1} \, q^{\frac{k(k+1)}{2} -1}}{(q;q)_k}
 \sum_{j=1}^k
\frac{j}{1-q^{j}},
\end{align}
takes value 1 at $q=0$, whilst, as evident from \eqref{eq:24_s}, the density of the conditional distribution of $q_{(2)}$ given ${\varphi (0)}=a$ has an essential singularity at $q=0$ for every fixed $a\not=0$, see Fig \ref{Fig:4} for illustration.
\end{remark}
\begin{figure}[t!]
\includegraphics[width=.45\linewidth]{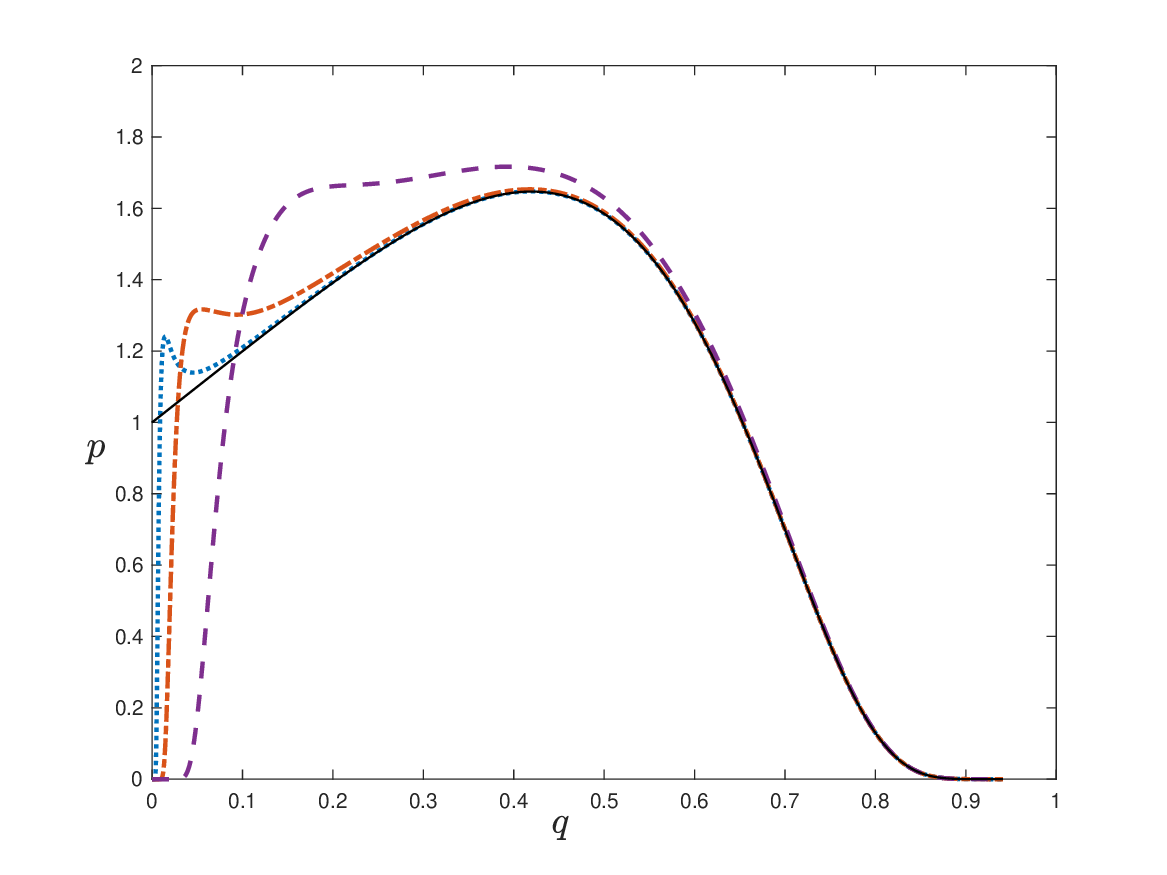}
\caption{Plots of the density of the conditional probability distribution of the second smallest absolute square of the zeros of ${\varphi (z)}=\sum_{k\ge 0} c_kz^k$ given that ${\varphi (0)}=a$  for $|a|^2=0.01$(dashed line), $|a|^2=0.001$ (dash-dotted line) and $|a|^2=0.0001$ (dotted line), and of the probability density of the smallest absolute square of the zeros of the unconditioned series ${\varphi (z)}$ (solid line).}
\label{Fig:4}
\end{figure}

Before proving Theorem \ref{Thm:FG}a we would like to explain the origin of the associated scaling limit. The Corollary \ref{cor_a=0} asserts that given ${\varphi (0)}=a$ and in the limit $a\to 0$,  the smallest absolute square $q_{(1)}$ `escapes'  from the crowd of larger absolute squares and moves towards zero.  To determine the scale of its deviations from zero, it is instructive to examine the conditional intensity function
\begin{align} \label{p_t1mr}
\rho_a (q) =  \frac{d}{dq} \E_{\varphi} \{  N_q | {\varphi (0)}\!=\!a \}=\frac{ q^2 + |a|^2 (1-q)}{q^2(1-q)^2} \, e^{|a|^2 \left( 1-\frac{1}{q}\right)}.
\end{align}
From Corollary \ref{cor_a=0}, we expect that the intensity develops a $\delta$-function peak as $a$ gets smaller and smaller, and that $\rho_a (q)$ approximates the probability density of $q_{(1)}$ around the peak, see Fig. \ref{Fig:1bis} for illustration. The position of the peak indicates the location of the smallest absolute square which can be determined from the equation $\frac{d}{dq} \rho_a (q) =0$. For $q\in(0,1)$,  this equation is equivalent to
\begin{align*}
{2 q^4-4 q^3  |a|^2+q^2  |a|^4+6q^2  |a|^2- 2 q  |a|^4 - 2 q  |a|^2+ |a|^4}
=0
\end{align*}
from which it is evident that in the limit of small values of $a$ the intensity $\rho_a(q)$ has a peak at $q=|a|^2/2$ of height $4e^{-2}/|a|^2$. Thus in the limit $a\to 0$ the smallest absolute square scales with $|a|^2$.

\begin{figure}[!htbp]
\includegraphics[width=.45\linewidth]{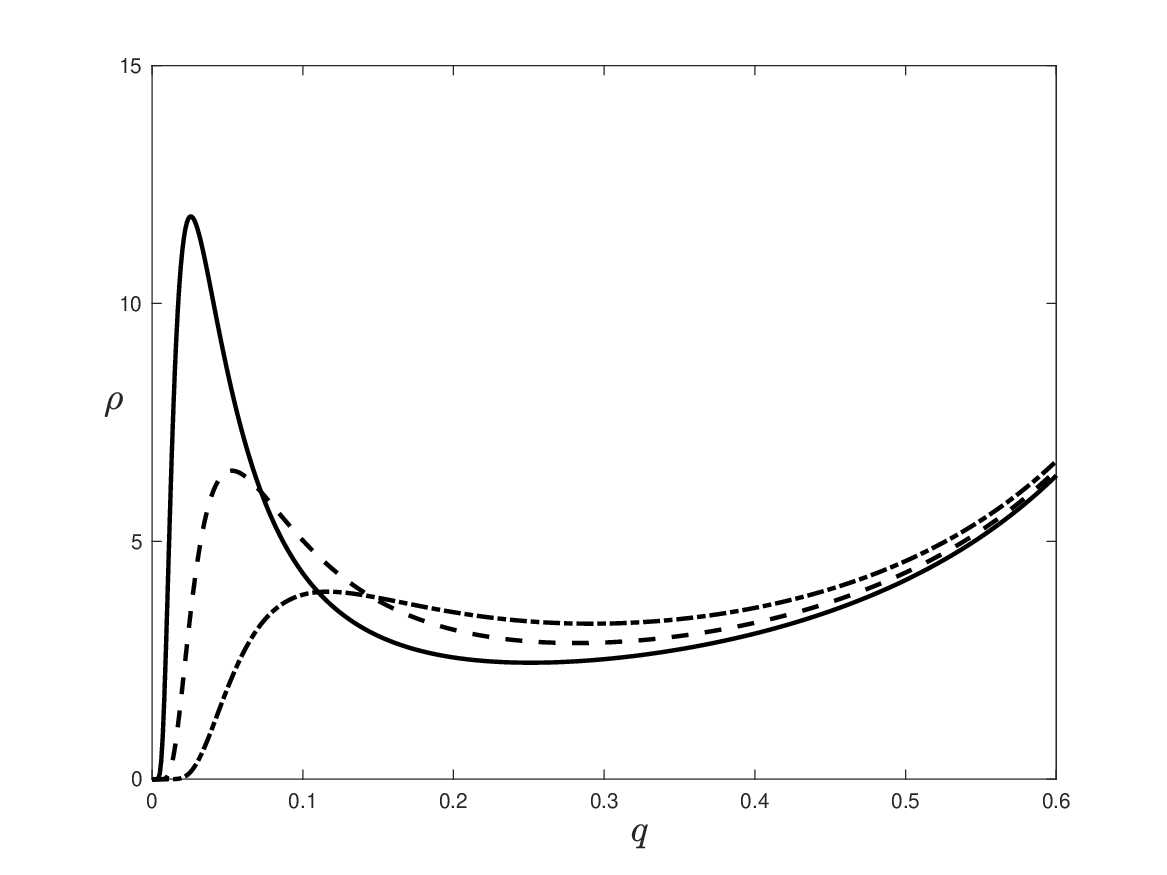}
\includegraphics[width=.45\linewidth]{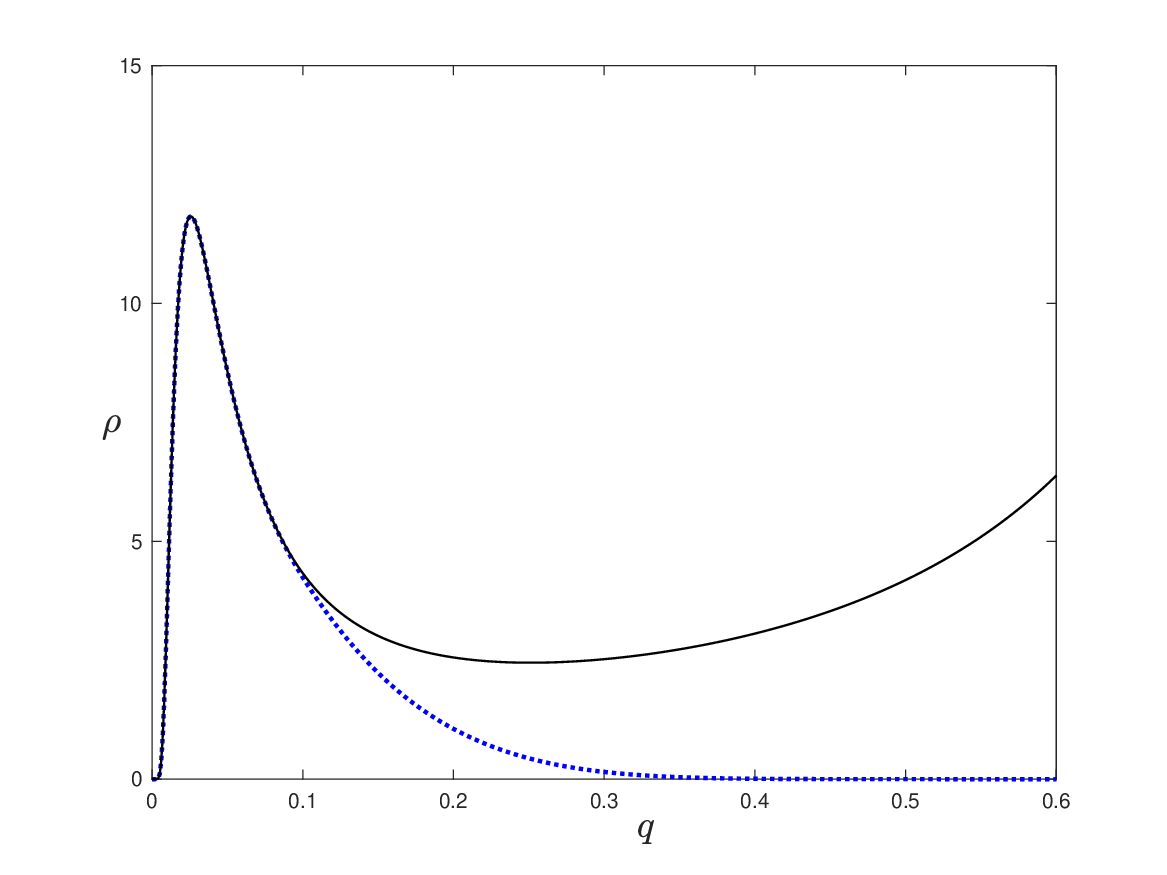}\\
\caption{Intensity $\rho_a(q)$ of the absolute squares of the zeros of ${\varphi (z)}=\sum_{k\ge 0} c_kz^k$ conditioned by ${\varphi (0)}=a$. The plot on the left depicts $\rho_a (q)$ for $|a|^2=0.05$ (solid line), $|a|^2=0.1$ (dashed line) and $|a|^2=0.2$ (dot-dashed line). The plot on the right depicts $\rho_a (q)$  (solid line) and the conditional density \eqref{eq:24} of the smallest absolute square (blue dotted line) for $|a|^2=0.05$.}
\label{Fig:1bis}
\end{figure}

\begin{proof}[Proof of Theorem \ref{Thm:FG}a]
By \eqref{fm8i},
\begin{align*}
{\E}_{\varphi} \big\{ (1\!+\!x)^{N_{ut}} \big| |{\varphi (0)}|^2\!=\!t  \big\} = 1+ \frac{x e^{t-\frac{1}{u}}}{1-ut}  +
 \sum_{k=2}^{\infty} \frac{(ut)^{\frac{k(k-1)}{2}} \, e^{t-\frac{1}{u^{k} t^{k-1}}}}{(1-ut) \cdots  (1-(ut)^{k})} \,  x^k \, .
\end{align*}
Evidently, in the scaling limit $t\to 0^{+}$, $q=ut$,
\begin{align*}
{\E}_{\varphi} \big\{ (1\!+\!x)^{N_{q}} \big| |{\varphi (0)}|^2\!=\!t    \big\} \to 1+ x e^{-\frac{1}{u}}
\end{align*}
for every $x$, and Theorem \ref{Thm:FG}a follows.
\end{proof}

It follows from Theorem \ref{Thm:FG}a and that the conditional distribution of the scaled smallest absolute square $q_{(1)}/ |a|^2$ given ${\varphi (0)}=a$ converges in the limit of small values of $a$ to the standard Fr\'echet distribution with shape parameter 1. We have
\begin{proposition} Write the $m$-th smallest absolute square as $q_{(m)} = |a|^2 u_{(m)}$. Then
\begin{align*}
\lim_{a\to 0} \Prob \{ u_{(m)} \!>\! u \, | \,  {\varphi (0)}\!=\!a \}=
\begin{cases}
1-e^{-\frac{1}{u}}, & \text{when } m=1; \\
1 & \text{when } m\ge 2;
\end{cases}
\end{align*}
\end{proposition}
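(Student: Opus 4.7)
The statement is essentially a direct translation of Theorem~\ref{Thm:FG}(a) from the language of the counting function $N_q$ to the language of order statistics. The plan is to exploit the elementary identity relating the event $\{q_{(m)}>q\}$ to the event that at most $m-1$ of the absolute squares fall in $[0,q]$, and then insert the scaling $q=|a|^2 u$ so that Theorem~\ref{Thm:FG}(a) applies.

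First I would write
\begin{align*}
\Prob\{ u_{(m)} > u \,|\, \varphi(0)=a \} = \Prob\{ q_{(m)} > |a|^2 u \,|\, \varphi(0)=a \} = \Prob\{ N_{|a|^2 u} \le m-1 \,|\, \varphi(0)=a \},
\end{align*}
which follows immediately from the definition of the $m$-th smallest point of $\mathcal{Q}$ (cf.\ \eqref{fm11b}). Thus it suffices to analyse the conditional distribution function of $N_q$ evaluated at $q = |a|^2 u$ as $a\to 0$.

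Next I would invoke Theorem~\ref{Thm:FG}(a), which asserts that in the scaling limit $a\to 0$, $q=|a|^2 u$, the conditional law of $N_q$ converges to Bernoulli$(e^{-1/u})$. Since the limiting random variable takes only the values $0$ and $1$, for $m=1$ we obtain
\begin{align*}
\lim_{a\to 0} \Prob\{ u_{(1)} > u \,|\, \varphi(0)=a \} = \lim_{a\to 0} \Prob\{ N_{|a|^2 u}=0 \,|\, \varphi(0)=a \} = 1-e^{-1/u},
\end{align*}
while for $m\ge 2$ the limiting probability of the event $\{N_q \le m-1\}$ equals the mass placed by the Bernoulli law on $\{0,1,\ldots,m-1\} \supseteq \{0,1\}$, which is $1$.

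There is no real obstacle here: the only thing to verify is that the convergence asserted in Theorem~\ref{Thm:FG}(a) is convergence in distribution of the integer-valued random variable $N_q$, so that the probabilities of the atoms $\{N_q=k\}$ converge individually for each fixed $k$; this is automatic since the limiting distribution (Bernoulli) is supported on $\{0,1\}$, which makes convergence of the cumulative distribution function and convergence at each atom equivalent. As a sanity check one could also obtain the same limits directly from the explicit $q$-series \eqref{pmfN_q} by substituting $q=|a|^2 u$ and letting $a\to 0$ term by term, using that $e^{|a|^2(1-(|a|^2 u)^{-k})}\to \delta_{k,1}\,e^{-1/u}$ (with all $k\ge 2$ terms vanishing, and the $k=0$ term contributing $1$), but this is not needed once Theorem~\ref{Thm:FG}(a) is in hand.
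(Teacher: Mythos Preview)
Your proof is correct and follows essentially the same approach as the paper: the paper's proof simply states that the proposition is an immediate consequence of Theorem~\ref{Thm:FG}(a) and relation~\eqref{fm11b}, which is precisely the identity $\{q_{(m)}>q\}=\{N_q\le m-1\}$ combined with the Bernoulli limit that you invoke.
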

\begin{proof} This proposition is an immediate consequence of Theorem \ref{Thm:FG}a and relation \eqref{fm11b}.
\end{proof}

\subsection{Scaling limit of large values of $a$} Now we turn to the opposite limit of large values of $a$. It is an immediate consequence of \eqref{fm8i} that for every fixed value of $q<1$, the probability for the disk $|z|^2\le q$ to contain zeros of the Gaussian power series $\varphi_a(z)=a+\sum_{k\ge 1} c_kz^k$ tends to zero in the limit $a\to+\infty$. An interesting question then arises about how fast $q$ should tend to 1 simultaneously with the increase of $a$ for the interval $[0,q]$ to catch up with the left tail of the set  $\mathcal{Q}_a$ of absolute squares of the zeros of $\varphi_a(z)$. Such a scale for $q$ is the natural habitat for the left-most extremes of $\mathcal{Q}_a$ in the limit of large $a$ and it can be determined by requiring that
\begin{align*}
{\E}_{\varphi} \{N_q \, | \, {\varphi (0)}=a \}= \frac{e^{-|a|^2\frac{1-q}{q}}}{1-q}
\end{align*}
remains non-zero and finite as $a\to\infty$. This happens if one chooses
 \begin{align*}
q=1- \frac{\log(|a|^2) - \log\log(|a|^2) + v}{|a|^2}. 
\end{align*}
This explains the origin of the scaling limit \eqref{sl_p} in Theorem \ref{Thm:FG}b.

\begin{proof}[Proof of Theorem \ref{Thm:FG}b]
From \eqref{fm8i} one finds the conditional factorial moments of $N_q$,
\begin{align}\label{fm8b}
\E_{\varphi} \big\{ N_q(N_q\!-\!1)\cdots (N_q\!-\!k\!+\!1) \big| \varphi (0)=a  \big\}
=  \frac{k!\, q^{\binom k 2}\, e^{|a|^2( 1- {q^{-k}})} }{(q;q)_k}.
\end{align}
It is straightforward to verify that in the scaling limit \eqref{sl_p} the expression on the r.h.s., for every $k \ge 0$, tends to $\exp(-kv)$, the factorial moments of Poisson($e^{-v}$).
\end{proof}

Turning to the conditional distribution of the $m$-th smallest absolute square $q_{(m)}$ of the zeros of ${\varphi (z)}=\sum_{k\ge 0} c_k z^k$ for large values of $c_0=a$, define random variables $v_t^{\sm{(k)}}$ by  the transformation
\begin{align*}
q_{(m)} = 1-\frac{\log(|a|^2) -\log\log(|a|^2) + v_{(m)}}{|a|^2}\, .
\end{align*}
It is an immediate consequence of Theorem \ref{Thm:FG}b and equation \eqref{fm11b} that
\begin{align*}
\lim_{a\to \infty} \Prob \{ v_{(m)} \!\le\!  v\, |\, {\varphi (0)}\!=\!a \}  = \frac{\Gamma (m, e^{-v})}{\Gamma (m)},
\end{align*}
where $ \Gamma (m, x)= \int_{x}^{+\infty} \lambda^{m-1} e^{-\lambda}\, d\lambda$ is the upper incomplete Gamma function. More generally,

\begin{theorem} \label{Prop4:13} Consider the rescaled point process $\mathcal{V}_a$ defined by the transformation
\begin{align*}
\mathcal{Q}_a=1- \frac{\log t - \log\log t + \mathcal{V}_a }{t}\,, \quad t=|a|^2,
\end{align*}
where $\mathcal{Q}_a$ is the set  of absolute squares of the zeros of $\varphi_a(z)=a+\sum_{k\ge 1} c_k z^k$. The rescaled process $\mathcal{V}_a$  converges in distribution, as   $a\to\infty$, to Poisson point process on $\mathbb{R}$ with intensity $\lambda (v)= e^{-v}$.
\end{theorem}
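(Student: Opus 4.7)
My plan is to establish convergence of the probability generating functional (p.g.fl) of $\mathcal V_a$ to that of the Poisson point process on $\mathbb R$ with intensity $\lambda(v)=e^{-v}$, namely to $\exp\bigl(\int H(v)e^{-v}dv\bigr)$, for every test function $H$ of bounded variation with compact support in $[-V,V]$ satisfying $1+H\ge 0$. Setting $t=|a|^2$ and $v_a(q):=t(1-q)-\log t+\log\log t$, I define $h_a(q):=H(v_a(q))$. For $t$ sufficiently large, $h_a$ is supported in a compact subset of $(0,1)$ and satisfies the hypotheses of Theorem \ref{Thm:M}. That theorem then expresses the desired p.g.fl as the series on the right-hand side of \eqref{limPGFmr1} with $|\varphi(0)|^2=t$, and the task reduces to analysing that series in the limit $t\to\infty$.

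The key step is the substitution $v_j=v_a(q_j)$ in each $k$-fold integral. Since $v_a$ is strictly decreasing, each Stieltjes integral $\int_0^1\cdot\,dh_a(q_j)$ transforms into $-\int_{\mathbb R}\cdot\,dH(v_j)$, so the $k$ sign flips exactly cancel the explicit $(-1)^k$ in \eqref{limPGFmr1}. Taylor-expanding $\log q_j$ about $q_j=1$, I obtain, uniformly for $v_j\in[-V,V]$ as $t\to\infty$,
\[
1-q_1\cdots q_j=\tfrac{j\log t}{t}\bigl(1+o(1)\bigr),\qquad
t\Bigl(1-\tfrac{1}{q_1\cdots q_k}\Bigr)=-k\log t+k\log\log t-\sum_{j=1}^k v_j+o(1).
\]
Inserting these into the integrand $F_k(\mathbf q;t)$ of \eqref{limPGFmr1}, the powers of $t$ and $\log t$ cancel exactly, yielding the pointwise limit $F_k(\mathbf q(\mathbf v);t)\to (k!)^{-1}e^{-(v_1+\cdots+v_k)}$.

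To interchange limit and summation I would prove the uniform estimate $|F_k(\mathbf q(\mathbf v);t)|\le C(V)^k/k!$ for all sufficiently large $t$ and $\mathbf v\in[-V,V]^k$. This combines the trivial bound $e^{t(1-1/(q_1\cdots q_k))}\le 1$ with the lower estimate $1-q_1\cdots q_j\ge c\,j(\log t)/t$, itself a consequence of $\log q_i\le-(1-q_i)$ summed over $i\le j$. Since $H$ has finite total variation, this yields the summable majorant $\sum_k C^k\|dH\|^k/k!<\infty$, and dominated convergence gives
\[
\lim_{a\to\infty}\E_\varphi\Bigl\{\textstyle\prod_{v\in\mathcal V_a}(1+H(v))\Bigr\}=\sum_{k\ge 0}\tfrac{1}{k!}\Bigl(\int_{\mathbb R}e^{-v}\,dH(v)\Bigr)^{\!k}=\exp\Bigl(\int_{\mathbb R}H(v)e^{-v}\,dv\Bigr),
\]
where the last equality is integration by parts (the boundary terms vanish as $H$ has compact support). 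The right-hand side is precisely the Poisson p.g.fl with intensity $e^{-v}$, and convergence of the p.g.fl on this class of $H$ converts, by the standard Kallenberg-type criterion, into convergence of $\mathcal V_a$ to the claimed Poisson process.

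The main obstacle is ensuring that the dominating constant $C(V)^k/k!$ is genuinely uniform in $t$: the $k$ second-order Taylor corrections to $\log q_j$, when compounded, must be verified to contribute only a bounded overall multiplicative factor rather than something growing with $k$, so that dominated convergence applies across the full range $1\le k<\infty$. The regime $k\lesssim t/\log^2 t$ is handled by the careful expansion, while for larger $k$ the crude bounds on $F_k$ suffice. Once this bookkeeping is in place, the argument is a direct application of Theorem \ref{Thm:M}.
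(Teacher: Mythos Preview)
Your approach via the probability generating functional of Theorem~\ref{Thm:M} is genuinely different from the paper's proof, which works with the \emph{multipoint factorial moments} of the counting function. The paper shows (Appendix~\ref{appendix:A}) that
\[
\E_{\varphi}\big\{N^{(a)}_{q_1}(N^{(a)}_{q_2}-1)\cdots(N^{(a)}_{q_k}-k+1)\big\}
= e^{t(1-1/(q_1\cdots q_k))}\sum_{\sigma\in S_k}\prod_{\nu\in\sigma}\frac{(-1)^{|\nu|-1}}{1-q_\nu},
\]
observes that under the scaling the identity permutation dominates (it has the maximal number $k$ of cycles, each contributing a factor $t/\log t$), and checks that the limit is $\prod_j e^{-v_j}$, the Poisson factorial moment. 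Because this works at each fixed order $k$, no tail control of an infinite series is needed; convergence of all factorial moments to those of a Poisson process determines the limit.

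Your p.g.fl.\ argument is conceptually natural and the pointwise asymptotics you derive are correct, but there is a concrete gap in the uniform bound. You claim $|F_k(\mathbf q(\mathbf v);t)|\le C(V)^k/k!$ by combining the trivial estimate $e^{t(1-1/(q_1\cdots q_k))}\le 1$ with $1-q_1\cdots q_j\ge cj\log t/t$. But those two ingredients only yield
\[
|F_k|\le \frac{1}{k!}\Big(\frac{t}{c\log t}\Big)^k,
\]
which is \emph{not} uniform in $t$; summing against $\|dH\|^k$ gives a bound exploding like $\exp(Ct/\log t)$. The cancellation of the factor $(t/\log t)^k$ is supplied precisely by the exponential, so throwing it away is fatal. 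The remedy is to use instead the elementary inequality $1/(q_1\cdots q_k)-1\ge\sum_{i=1}^k(1-q_i)\ge k\varepsilon_{\min}$ with $\varepsilon_{\min}=(\log t-\log\log t-V)/t$, which gives
\[
e^{t(1-1/(q_1\cdots q_k))}\le \big(e^V\log t/t\big)^k.
\]
For $k\le J:=\lfloor 1/\varepsilon_{\min}\rfloor$ this combines with your denominator bound to give the desired $C^k/k!$. For $k>J$ the factors $1/(1-q_1\cdots q_j)$ with $j>J$ are bounded by $(1-e^{-1})^{-1}$, and the extra powers of $e^V\log t/t\ll 1$ in the exponential easily dominate; a short Stirling estimate shows the tail $\sum_{k>J}$ is negligible. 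With this correction your argument goes through, though the bookkeeping is noticeably heavier than the paper's fixed-$k$ route.
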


\begin{proof} It will suffice to prove convergence of the multipoint factorial moments of the counting function $\widetilde N_v^{(a)} = \card \left( \mathcal{V}_a \cap [v, +\infty)  \right)$
 to those of the Poisson process. To simplify the notations, denote by $N_q^{(a)}$ the counting fucntion of $\mathcal{Q}_a$, $N_q^{(a)}= \card \left( \mathcal{Q}_a \cap [0,q])  \right)$. Evidently, $\widetilde N_v^{(a)}= N_q^{(a)}$ with $v$ and $q$ being related as in \eqref{sl_p}.

 In Appendix \ref{appendix:A} we prove, following a similar calculation in \cite{PV04}, that
\begin{align*}
\E_{\varphi} \left\{  N_q^{(a)}( N_q^{(a)}-1) \cdots (N_q^{(a)}-k+1)\right\} =
e^{t\left(1-\frac{1}{q_1\cdots q_k}\right)}\sum_{\sigma\in S_k} \prod_{\nu\in \sigma } (-1)^{|\nu|-1} \frac{1}{1-q_{\nu}}\, , \
\end{align*}
where the sum is over all permutations of $k$ indices, the product is over all disjoint cycles $\nu$ of permutation $\sigma$, $|\nu|$ is the length of $\nu$ and $q_{\nu}=\prod_{j\in \nu} q_j$. On substituting here $1- (\log t - \log\log t + v_j)/t$ for $q_j$, we find that for every cycle $\nu$
\begin{align*}
\frac{1}{1-q_{\nu}} = \frac{1}{|\nu|} \frac{t}{\log t} \left( 1+ O\left( \frac{\log\log t}{\log t }\right)\right),
\end{align*}
and, hence, the leading form of the above sum over permutations is given by the contribution from the identity permutation as it contains $k$ cycles, the maximal number possible:
\begin{align}\label{n_sum1}
\sum_{\sigma\in S_k} \prod_{\nu\in \sigma } (-1)^{|\nu|-1} \frac{1}{1-q_{\nu}} = \left( \frac{t}{\log t}\right)^k \left( 1+ O\left( \frac{\log\log t}{\log t }\right)\right).
\end{align}
As for the exponential factor, its leading order form is given by
\begin{align}\label{n_sum2}
\exp\left[t\left(1-\frac{1}{q_1\cdots q_k}\right) \right] \sim \left(\frac{\log t}{t}\right)^k \exp \left(-\sum_{j=1}^k v_j\right)
\end{align}
On multiplying \eqref{n_sum1} and \eqref{n_sum2}, and recalling that $\widetilde N_v^{(a)}= N_q^{(a)}$,  we conclude that for every $k$ and every $(v_1, \ldots, v_k)$ in $\mathbb{R}^k$,
\begin{align*}
\lim_{a\to\infty}\E_{\varphi} \left\{ \widetilde N_{v_1}^{(a)}(\widetilde N_{v_2}^{(a)}-1) \cdots (\widetilde N_{v_k}^{(a)}-k+1)\right\}  = \exp \left(-\sum_{j=1}^k v_j\right).
\end{align*}
The exponential on the r.h.s. is the multipoint factorial moment of the corresponding counting function of the Poisson point process with intensity $\lambda (v) = e^{-v}$.
\end{proof}

 \medskip

\section{Asymptotic analysis of moment generating function}
\label{Sec:4}

In this section we develop asymptotics of the $q$-series
\begin{align} \label{Fext}
    F(x,t;q):=\sum_{k=0}^\infty\frac{(-x)^kq^{\binom k2}}{(q;q)_k}\, e^{t(1-q^{-k})}\, , \quad 0 < q < 1, 
   \end{align}
in the limit $q\to 1^{-}$.
For every $q\in (0,1)$ this series converges absolutely and uniformly in $x$ on compact subsets of $\mathbb{C}$ and in $t \ge 0$.

When $t=0$, \eqref{Fext} reduces to the $q$-exponential
\begin{align}\label{E_q}
E_q(-x) = \sum_{k=0}^{\infty} \frac{q^{\binom k 2}(-x)^k}{(q;q)_k} =(x;q)_{\infty}
\end{align}
which is one of the two commonly used $q$-deformations of the usual exponential function, the other one being $e_q(x)=1/(x;q)_{\infty}$.
The $q$-exponentials are well known special functions
but the deformation \eqref{Fext} appears to be new. In the context of our work
\begin{align}\label{cmgf}
\E_{\varphi} \big\{ (1\!-\!x)^{N_q} \, \big| \, |\varphi(0)|^2=t \big\} = F(x,t;q).
\end{align}
The unconditional moment generating function can also be expressed via  $F(x,t;q)$,
\begin{align}\label{mgfa}
\E_{\varphi} \big\{ (1-x)^{N_q}\big\} = \frac{F(x,0;q)}{1-x}
=F(xq,0;q).
\end{align}
The factor $(1-x)^{-1}$ here accounts for the emergence of zero of ${\varphi_a (z)}=a + \sum_{k\ge 1} c_k z^k $ at the origin in the limit $a\to 0$.

Asymptotic expansions for $F(x,0;q)$
in the limit $q \to 1^{-}$ are well studied and can be obtained from the product representation $F(x,0;q)=(x;q)_{\infty}$. For example, for $|x|<1$,
\begin{align}\label{qdilog}
-\log F(x,0;q) =  \sum_{k=0}^{\infty} \log (1-xq^k) =   \sum_{k=1}^{\infty} \frac{x^k}{k(1-q^k)} \:.
\end{align}
On a formal level, when $q$ tends to 1, the sum on the right, which is known as quantum dilogarithm, can be replaced by $(1-q)^{-1} {\Li}_2 (x)$
where ${\Li}_2(x)$ is the dilogarithm function
\begin{align}\label{dilog}
{\Li}_2 (x)  = \sum_{k=1}^{\infty} \frac{x^k}{k^2}  = -\int_0^x \frac{\log(1-u)}{u}\, du.
\end{align}
This gives
\begin{align*}
\log F(x,0;q)  \sim - \frac{{\Li}_2(x)}{1-q}\:.
\end{align*}
More precise treatments of sums in \eqref{qdilog} leads to asymptotic expansions of $F(x,0;q)$ in rising powers of $\varepsilon = -\log q $. Such expansions were first obtained by Ramanujan \cite[Ch. 27, Entries 6-7]{RIV} and later by others, see, e.g., \cite{Moak1984,D1994,UN1994,K1995,Prellberg95}. For future reference we need the first two terms of the expansion based on the Euler-Maclaurin summation formula. This expansion holds in the complex $x$-plane  cut along the real axis from $1$ to $+\infty$ \cite{Prellberg95}.
\begin{lemma}[\cite{Prellberg95, K1995}]
\label{lem_qPochas}
Let $\varepsilon=-\log q$. For every $q\in(0,1)$ and $x\in \mathbb{C}\backslash [1,+\infty)$
\begin{align}
\log(x;q)_\infty = -\frac{1}{\varepsilon}{\Li}_2(x)+\frac{1}{2}\ln(1\!-\!x)
+  \varepsilon R_{1}(x,q)
\label{eq_asy_qprod}
\end{align}
where ${\Li}_2(x)$ is the principal branch of the dilogarithm function defined by the integral in \eqref{dilog} and
\begin{align}\label{R1}
|R_{1}(x,q)| \le \frac{1}{6} \int_0^1 \frac{|x| du}{|1-xu|^2} \:.
\end{align}
\end{lemma}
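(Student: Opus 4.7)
The plan is to apply Euler--Maclaurin summation to the absolutely convergent series $\log(x;q)_\infty = \sum_{k\ge 0} f(k)$ with $f(u) := \log(1 - x e^{-\varepsilon u})$ taken on the principal branch. The leading two terms of \eqref{eq_asy_qprod} come from the main integral and the first boundary contribution. Under the substitution $v = x e^{-\varepsilon u}$, which parametrises the segment from $0$ to $x$ lying entirely in $\mathbb{C}\setminus[1,+\infty)$, one finds $\int_0^\infty f(u)\, du = \varepsilon^{-1}\int_0^x \log(1-v)/v\, dv = -\Li_2(x)/\varepsilon$, while $\tfrac{1}{2} f(0) = \tfrac{1}{2}\log(1-x)$.

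For the remainder I would use the second-order form of Euler--Maclaurin. Starting from the first-order identity $\sum_{k\ge 0} f(k) - \tfrac{1}{2}f(0) - \int_0^\infty f\, du = \int_0^\infty P_1(u) f'(u)\, du$ with $P_1(u) = \{u\} - \tfrac{1}{2}$, and integrating by parts on each unit interval using $P_1 = \tfrac{1}{2}P_2'$ for $P_2(u) = B_2(\{u\}) = \{u\}^2 - \{u\} + \tfrac{1}{6}$, the observation $P_2(k^\pm) = \tfrac{1}{6}$ telescopes the boundary terms and, together with $f'(\infty)=0$, produces
\begin{equation*}
\varepsilon R_1(x,q) \;=\; -\tfrac{1}{12} f'(0) \;-\; \tfrac{1}{2}\int_0^\infty P_2(u)\, f''(u)\, du.
\end{equation*}

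The claimed bound then reduces to elementary estimates. A direct calculation gives $f'(0) = \varepsilon x/(1-x)$ and $f''(u) = -\varepsilon^2 x e^{-\varepsilon u}/(1 - x e^{-\varepsilon u})^2$, and the substitution $v = e^{-\varepsilon u}$ identifies $\int_0^\infty |f''(u)|\, du = \varepsilon \int_0^1 |x|/|1-xv|^2\, dv$. The elementary identity $\int_0^1 x/(1-xv)^2\, dv = x/(1-x)$ combined with the triangle inequality shows that $|x|/|1-x|$ is dominated by the same integral. Using the sharp bound $|P_2(u)| \le \tfrac{1}{6}$ (attained at the integers),
\begin{equation*}
|\varepsilon R_1(x,q)| \;\le\; \tfrac{1}{12}\, |f'(0)| \;+\; \tfrac{1}{12}\int_0^\infty |f''(u)|\, du \;\le\; \tfrac{\varepsilon}{6} \int_0^1 \frac{|x|\, dv}{|1-xv|^2},
\end{equation*}
which is \eqref{R1} upon division by $\varepsilon$. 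Note that the constant $\tfrac{1}{6}$ arises from balancing the $\tfrac{1}{12}$ in front of $f'(0)$ with the $\tfrac{1}{12}$ from $\max|P_2|$; a first-order Euler--Maclaurin with $|P_1|\le \tfrac{1}{2}$ would not produce the correct form of the majorant.

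The main subtlety --- less a computational hurdle than a matter of bookkeeping --- is the uniform validity of the argument across the entire cut plane rather than just for $|x|<1$. One needs to check that the dilogarithm integration path and the logarithmic branches in each $\log(1-xq^k)$ can be read coherently; this hinges on the observation that the whole ray $\{x e^{-\varepsilon u}: u\ge 0\}$ stays in $\mathbb{C}\setminus[1,+\infty)$ whenever $x$ does. Once the identity is established on the unit disc $|x|<1$, both sides are holomorphic in $x$ on $\mathbb{C}\setminus[1,+\infty)$ and the majorant in \eqref{R1} is continuous there, so the conclusion extends by analytic continuation.
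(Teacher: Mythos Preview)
Your proof is correct. The paper does not give its own proof of this lemma --- it is quoted from \cite{Prellberg95,K1995}, and the surrounding text explicitly says the expansion is ``based on the Euler--Maclaurin summation formula''. Your argument is precisely that standard derivation: the main integral produces $-\varepsilon^{-1}\Li_2(x)$, the first boundary term gives $\tfrac12\log(1-x)$, and the second-order remainder, bounded via $|P_2|\le\tfrac16$ together with the neat observation $|x/(1-x)|=\bigl|\int_0^1 x(1-xv)^{-2}\,dv\bigr|\le\int_0^1|x|\,|1-xv|^{-2}\,dv$, yields exactly the constant $\tfrac16$ in \eqref{R1}. Your remark that the ray $\{xe^{-\varepsilon u}:u\ge 0\}$ stays off the cut whenever $x$ does is the right way to handle the branch bookkeeping; the analytic-continuation fallback you mention at the end is then not really needed, since the Euler--Maclaurin computation is already valid directly on the cut plane.
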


\subsection{Integral representation}
For  $t\ne 0$ the series $F(x,t;q)$ is no longer a $q$-product and one needs a different strategy for the development of its asymptotic expansions. We use a contour integral representation of $F(x,t;q)$ and its saddle point approximations. An alternative approach based on a functional-differential equation for $F(x,t;q)$ is discussed in Appendix \ref{Appendix DEs} .

\begin{lemma}
\label{lemma2}
For every $q\in (0,1)$, $t\ge 0$ and $x\in \mathbb{C}\backslash\{0\}$,
\begin{align}\label{Fcont}
F(x,t;q)=\frac{(q;q)_\infty}{2\pi i}\oint_{\mathcal{C}}\frac{e^{-\log x\log z/\log q}}{(z;q)_\infty}e^{t(1-z)}dz
\end{align}
where ${\mathcal{C}}$ is a clockwise Hankel contour around the positive real axis from $\infty-i0$ to $\infty+i0$
that can be deformed to run along the straight line $(\rho-i\infty,\rho+i\infty)$ for $0<\rho<1$.
\end{lemma}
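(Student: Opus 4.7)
The plan is to evaluate the contour integral in \eqref{Fcont} by the residue theorem. With the principal branch of $\log z$ (branch cut along $(-\infty,0]$) and any fixed determination of $\log x$ for $x\ne 0$, the integrand
\[
g(z):=\frac{e^{-\log x\,\log z/\log q}}{(z;q)_\infty}\,e^{t(1-z)}
\]
is meromorphic on $\mathbb{C}\setminus(-\infty,0]$; its poles are precisely the zeros of $(z;q)_\infty$, located at $z=q^{-k}$ for $k=0,1,2,\ldots$, all on the positive real axis with $q^{-k}\ge 1$. The main algebraic step is the residue computation: isolating the vanishing factor $1-zq^k=-q^k(z-q^{-k})$ and reducing the remaining product via the identity $1-q^{-i}=-q^{-i}(1-q^i)$ for $i=1,\ldots,k$ gives
\[
\Res_{z=q^{-k}}\frac{1}{(z;q)_\infty}=\frac{(-1)^{k+1}q^{\binom k 2}}{(q;q)_k(q;q)_\infty},
\]
which, combined with $e^{-\log x\log z/\log q}|_{z=q^{-k}}=x^k$ and $e^{t(1-q^{-k})}$, yields $\sum_{k\ge 0}\Res_{z=q^{-k}}g(z)=-F(x,t;q)/(q;q)_\infty$.

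Next I would close the vertical line $\operatorname{Re}z=\rho$, $0<\rho<1$, by a large rectangle $\{\rho\le\operatorname{Re}z\le R,\ |\operatorname{Im}z|\le M\}$ with $R$ chosen between consecutive poles, apply the residue theorem in the counterclockwise orientation, and let $M,R\to\infty$. The horizontal sides vanish because $1/|(z;q)_\infty|$ decays super-polynomially in $|\operatorname{Im}z|$: termwise estimation of $\log|(z;q)_\infty|=\sum_{j\ge 0}\log|1-zq^j|$ gives a growth $\log|(z;q)_\infty|\gtrsim(\log|\operatorname{Im}z|)^2/(2|\log q|)$, easily dominating the at-most-polynomial growth of the other factors on vertical strips. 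The right edge vanishes for $t>0$ directly from $|e^{t(1-z)}|=e^{t(1-R)}\to 0$; for $t=0$ I would prove the identity first for $|x|$ small (where $|z|^{-\operatorname{Re}(\log x)/\log q}$ itself provides decay along the right edge) and then extend to all $x\ne 0$ by analytic continuation, using that both sides of \eqref{Fcont} are holomorphic in $x$.

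Rearranging the residue theorem then produces $\frac{(q;q)_\infty}{2\pi i}\int_{\rho-i\infty}^{\rho+i\infty}g(z)\,dz=F(x,t;q)$, which is the lemma along the vertical contour. The Hankel-contour version follows by homotopy: the clockwise loop $\mathcal{C}$ around the positive real axis from $\infty-i0$ to $\infty+i0$ can be deformed within $\mathbb{C}\setminus(\{q^{-k}:k\ge 0\}\cup(-\infty,0])$ onto the upward vertical line $\operatorname{Re}z=\rho$, the connecting arcs at infinity contributing nothing by the same decay estimates. The main obstacle is the right-edge decay when $t=0$ and $|x|$ is not small; the analytic-continuation argument in $x$ cleanly sidesteps this difficulty.
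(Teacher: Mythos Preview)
Your proposal is correct and follows essentially the same approach as the paper: both proofs rest on the residue computation $\Res_{z=q^{-k}}1/(z;q)_\infty=(-1)^{k+1}q^{\binom k2}/\big((q;q)_k(q;q)_\infty\big)$ together with a contour-deformation argument, the paper simply citing \cite{Prellberg95} for the analytic details you spell out. One small simplification: your super-polynomial growth estimate $\log|(z;q)_\infty|\gtrsim(\log|z|)^2/(2|\log q|)$ actually holds for $|z|$ large along the right edge (with $R$ between consecutive poles) as well, so it already beats the polynomial growth of $e^{-\log x\log z/\log q}$ and makes the separate $t=0$ analytic-continuation argument unnecessary.
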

\begin{proof}
Equation \eqref{Fcont} is a simple extension to $t>0$ a contour integral for $F(x,0;q)$ given in \cite[Lemma 2.1]{Prellberg95}. It is derived from recognising that the residues of the poles of $1/(z;q)_{\infty}$,
    \begin{align*}
        \Res_{z=q^{-k}}  \frac{1}{(z;q)_{\infty}} =\frac{(-1)^{k+1}q^{\binom k2}}{(q;q)_k(q;q)_\infty}\:,
    \end{align*}
 contain much of the structure of the $q$-series (\ref{Fext}), and we can write
 \begin{align*}
F(x,t;q) =-(q;q)_\infty\sum_{k=0}^\infty\mathrm{Res}_{z=q^{-k}} \frac{x^{-\log z/\log q}e^{t(1-z)}}{(z;q)_\infty}\, .
\end{align*}
The result follows by writing the individual residues in terms of contours encircling the poles at $z=q^{-k}$ and
deforming these to a joint Hankel contour, absorbing the minus sign in a change of orientation.
\end{proof}

We now apply the approximation of Lemma \ref{lem_qPochas} to the integral in Lemma \ref{lemma2}.

\begin{lemma}
\label{Lemma:intrep}
\label{contour_t}
Let $\varepsilon = - \log q$. Then for every $x\in \mathbb{C}\backslash\{0\}$ and $t\ge 0$
\begin{align}\label{int_rep}
F(x,t;q)\!\mathop{=}_{\,\,\,\, q\to 1^{-}}
\!\frac{(q;q)_\infty}{2\pi i}\!\!\int_{\rho-i\infty}^{\rho+i\infty}
\!\!\!\exp\left[{\frac{\log x\log z\!+\!{\Li}_2(z)}{\varepsilon}\!+\!t(1\!-\!z)}\right]
\frac{dz}{\sqrt{1\!-\!z}}
\!\times \!\big(1\!+\!O(\varepsilon)\big)
\end{align}
for any $\rho \in (0,1)$.
\end{lemma}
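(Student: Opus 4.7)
The plan is to substitute the two-term asymptotic expansion of $\log(z;q)_\infty$ from Lemma \ref{lem_qPochas} directly into the Hankel contour integral of Lemma \ref{lemma2}, with the contour deformed to the vertical line $\Re z=\rho$ for some fixed $\rho\in(0,1)$, and then to verify that the resulting correction factor contributes only a relative $O(\varepsilon)$ error to the integral.

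Since $\log q=-\varepsilon$, we have $e^{-\log x\log z/\log q}=\exp(\log x\log z/\varepsilon)$, and Lemma \ref{lem_qPochas} gives
\begin{align*}
\frac{1}{(z;q)_\infty}=\frac{\exp[\Li_2(z)/\varepsilon]}{\sqrt{1-z}}\,e^{-\varepsilon R_1(z,q)}\, .
\end{align*}
Substituting these identities into \eqref{Fcont} and combining with $e^{t(1-z)}$ presents the integrand of \eqref{int_rep} multiplied by the single correction factor $e^{-\varepsilon R_1(z,q)}$, so the task reduces to controlling $R_1(z,q)$ along the vertical contour.

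To this end, I would establish a uniform bound on $|R_1(z,q)|$ for $z=\rho+iy$, $y\in\mathbb{R}$, using \eqref{R1}:
\begin{align*}
|R_1(z,q)|\le\frac{|z|}{6}\int_0^1\frac{du}{(1-\rho u)^2+y^2u^2}\, .
\end{align*}
For large $|y|$, splitting the $u$-integration at $u=1/|y|$ and using $(1-\rho u)^2+y^2u^2\ge 1$ on $[0,1/|y|]$ and $\ge y^2u^2$ on $[1/|y|,1]$, one sees that the integral is $O(1/|y|)$, so that $|z|/6$ times it is $O(1)$; for small $|y|$ it is trivially bounded since $\rho<1$. Hence $|R_1(z,q)|\le M(\rho)$ uniformly in $y\in\mathbb{R}$ and in $q$, and therefore $e^{-\varepsilon R_1(z,q)}=1+O(\varepsilon)$ uniformly on the contour as $\varepsilon\to 0^+$.

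Finally, I would verify absolute convergence of the resulting integral and the legitimacy of pulling the $(1+O(\varepsilon))$ outside. The inversion identity $\Li_2(z)=-\tfrac12\log^2(-z)-\pi^2/6-\Li_2(1/z)$ gives $\Re\,\Li_2(z)/\varepsilon\sim-\log^2|y|/(2\varepsilon)$ as $|y|\to\infty$, dominating the at-most-logarithmic growth of $|\log x\log z|/\varepsilon$ and the bounded real part of $t(1-z)$ on the line; this yields super-polynomial decay of $|{\exp[\cdots]}|/\sqrt{|1-z|}$ and hence absolute convergence of the integral for small $\varepsilon$. The uniform estimate $|e^{-\varepsilon R_1}-1|\le C\varepsilon$ then bounds the difference between $F(x,t;q)$ and the leading integral in \eqref{int_rep} by $C\varepsilon$ times the integral of the modulus of the leading integrand, which is of the same order as that leading integral itself (its mass being concentrated near the saddle point to be identified in the subsequent analysis). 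The main obstacle is producing the uniform bound on $R_1$ along an \emph{infinite} contour, as the naive bound from \eqref{R1} grows with $|z|$; once the cancellation between $|z|$ and $1/|y|$ in the integral is made explicit, the rest of the argument is routine substitution and growth estimation.
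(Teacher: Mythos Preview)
Your proposal is correct and follows the same route as the paper: substitute the expansion of Lemma~\ref{lem_qPochas} into the contour integral of Lemma~\ref{lemma2} and check that the remainder $R_1$ stays uniformly bounded along the vertical line. The paper's own proof is a single sentence asserting that uniform boundedness; you actually supply the estimate, and your splitting of the $u$-integral at $u\sim 1/|y|$ is exactly the mechanism that offsets the growing factor $|z|$ in~\eqref{R1}. Your decay argument for the integrand via the $\Li_2$ inversion identity is likewise correct and is more than the paper records. The only point where you go beyond both the paper and what is strictly provable at this stage is the passage from a uniform $(1+O(\varepsilon))$ inside the integral to a multiplicative $(1+O(\varepsilon))$ outside: this needs $\int|f|\asymp|\int f|$, which is indeed justified by the forthcoming saddle-point localisation, and you rightly flag it as deferred rather than claimed.
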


\begin{proof}
This follows directly from Lemma \ref{lemma2}, noting that  $R_1(x,q)$ \eqref{eq_asy_qprod} remains uniformly bounded along the contour of integration.
\end{proof}

The function $F(x,t;q)$ can be evaluated asymptotically when $q$ is close to 1 by using the saddle-point method and asymptotic expansion for $(q;q)_{\infty}$  of which we only need several first terms \cite{Watson1936},
\begin{align}\label{eq_qqas}
\log (q;q)_{\infty} = -\frac{\pi^2}{6 \varepsilon} + \frac 12 \log \frac{2\pi}{\varepsilon}  + O(\varepsilon)\:, \quad \varepsilon=-\log q\to 0^+\;.
\end{align}
The asymptotic expansion for $(q;q)_{\infty}$ is a classical result in number theory and can be obtained from the transformation formula for the Dedekind eta-function, see e.g., \cite[pp 47--48]{Apostol1976}.

\subsection{Central Limit Theorem and Large Deviations.} We first consider $F(x,t;q)$ for a fixed $t \!>\! 0$.
\begin{theorem}
\label{afixed_t}
For every
$x\in \mathbb{C}\backslash \big((-\infty, 0] \cup [1, +\infty)\big)$
and $t\ge 0$,
\begin{align}\label{F1}
F(x,t;q)=\exp \left[
{-\frac 1 \varepsilon\, {\Li}_2(x)+\frac{1}{2} \log (1-x) + t x + O(\varepsilon)}
\right]
\;, \quad \varepsilon=-\log q \to 0^+
\end{align}
Equation \eqref{F1} also holds true as $x$ approaches zero at an angle $\phi $ with the positive real axis provided that $\phi \not= \pi$.
\end{theorem}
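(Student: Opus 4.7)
The strategy is to apply the saddle point method to the contour integral of Lemma \ref{contour_t}. Write the integrand as
\begin{align*}
\exp\!\left[\frac{S(z)}{\varepsilon}+t(1-z)\right]\frac{1}{\sqrt{1-z}}, \qquad S(z):=\log x\,\log z+\Li_2(z),
\end{align*}
and observe that the exponentially large factor $e^{S(z)/\varepsilon}$ dictates the asymptotics while the remaining factors are of order 1 along the saddle contour.

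First I would locate the saddle. Since $\Li_2'(z)=-\log(1-z)/z$, the equation $S'(z)=0$ reads $\log x=\log(1-z)$, giving the unique saddle $z_\ast=1-x$. A short computation shows
\begin{align*}
S''(z_\ast)=\frac{1}{x(1-x)},
\end{align*}
so the saddle is non-degenerate whenever $x\notin\{0,1\}$. To evaluate $S(z_\ast)$ I would invoke the dilogarithm reflection identity $\Li_2(x)+\Li_2(1-x)=\tfrac{\pi^2}{6}-\log x\log(1-x)$, which yields the clean formula
\begin{align*}
S(z_\ast)=\log x\log(1-x)+\Li_2(1-x)=\frac{\pi^2}{6}-\Li_2(x).
\end{align*}

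Next I would deform the vertical contour $\Re z=\rho$ in \eqref{int_rep} to pass through $z_\ast=1-x$ along the steepest descent direction (the orientation being fixed by $\arg S''(z_\ast)$). The excluded sets $(-\infty,0]$ and $[1,+\infty)$ for $x$ keep $z_\ast$ away from both the pole structure of $1/(z;q)_\infty$ (which lies at $z=q^{-k}\ge 1$) and the branch cut of $\log z$, so the deformation is legitimate and the contributions from off-saddle portions of the contour are exponentially subdominant. Standard Gaussian approximation then gives
\begin{align*}
\frac{1}{2\pi i}\int_{\mathcal{C}}\!\cdots\,dz
=\frac{e^{t\,x}}{\sqrt{x}}\cdot\frac{1}{\sqrt{2\pi\varepsilon\, S''(z_\ast)^{-1}}}\cdot e^{S(z_\ast)/\varepsilon}\bigl(1+O(\varepsilon)\bigr)
=e^{tx}\sqrt{\frac{\varepsilon(1-x)}{2\pi}}\,e^{(\pi^2/6-\Li_2(x))/\varepsilon}\bigl(1+O(\varepsilon)\bigr).
\end{align*}
Multiplying by the classical asymptotic $(q;q)_\infty=\sqrt{2\pi/\varepsilon}\,e^{-\pi^2/(6\varepsilon)}(1+O(\varepsilon))$ of \eqref{eq_qqas} cancels the $\pi^2/6$ terms and the $\sqrt{\varepsilon}$ factors, leaving exactly
\begin{align*}
F(x,t;q)=\exp\!\left[-\frac{\Li_2(x)}{\varepsilon}+\tfrac{1}{2}\log(1-x)+tx+O(\varepsilon)\right],
\end{align*}
as claimed.

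The main obstacle is justifying the saddle point approximation uniformly in $x$ as it approaches the excluded set, and in particular the claim that \eqref{F1} remains valid as $x\to 0$ along any ray of argument $\phi\neq\pi$. Near $x=0$ the saddle $z_\ast=1-x$ collides with the point $z=1$ where $\Li_2$ has a branch singularity, so one has to verify that the quadratic approximation of $S$ at $z_\ast$ still dominates the remainder over a steepest-descent window of shrinking width. The key estimate is that $|S'''(z_\ast)|=O(1/|x|^2)$ combined with a steepest-descent window of length $\sqrt{\varepsilon |x(1-x)|}$ yields a cubic error of order $\sqrt{\varepsilon/|x|}$, which remains $o(1)$ provided $|x|\gg\varepsilon$; for $|x|\lesssim\varepsilon$ the exponent $\Li_2(x)/\varepsilon$ is itself $O(1)$ and one may compare the series \eqref{Fext} directly, term-by-term, with the claimed right-hand side. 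The angular restriction $\phi\neq\pi$ is precisely what is needed for $1-x$ to stay off the logarithmic branch cut of $\log z$ uniformly in $|x|$.
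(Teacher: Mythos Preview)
Your approach for $x$ away from the boundary is exactly the paper's: locate the saddle at $z_\ast=1-x$, apply the dilogarithm reflection identity to evaluate $S(z_\ast)=\pi^2/6-\Li_2(x)$, and combine the Gaussian contribution with the asymptotics \eqref{eq_qqas} of $(q;q)_\infty$. (There is a slip in your displayed intermediate factor $1/\sqrt{2\pi\varepsilon\,S''(z_\ast)^{-1}}$; the correct Gaussian factor is $\sqrt{\varepsilon/(2\pi S''(z_\ast))}$, which is what your final expression uses.)

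Where you diverge from the paper is the treatment of $x\to 0$. You propose a two-regime split: for $|x|\gg\varepsilon$ bound the cubic remainder directly, and for $|x|\lesssim\varepsilon$ fall back on a term-by-term comparison of the series. The paper instead makes the substitution $z=1-sx$, which sends the saddle to the \emph{fixed} point $s=1$ and turns the problem into an ordinary saddle-point integral with $x$-dependent branch cuts $C_1=\{re^{i(\pi-\arg x)}:r\ge 0\}$ and $C_2=\{re^{-i\arg x}:r\ge|x|^{-1}\}$. The angular restriction $\arg x\neq\pi$ is then precisely the condition that $C_1$ avoids $s=1$, and the standard method applies uniformly without any case analysis. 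This rescaling is cleaner and avoids having to justify the somewhat vague ``compare the series directly'' step in your $|x|\lesssim\varepsilon$ regime; your route could be made to work, but it is more laborious and the second regime would need to be spelled out carefully.
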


\begin{proof} We use the integral representation of $F(x,t;q)$ of Lemma \ref{contour_t}. This is a straightforward application of the saddle-point method. The integrand in \eqref{int_rep}  is analytic in the complex $z$-plane and has branch cuts $(-\infty,0]$ and $[1,+\infty)$ along the real axis. The function $f(z)=\log x\log z\!+\!{\Li}_2(z)$ has a single saddle at $z=1-x$. If $x\notin (-\infty, 0] \cup [1, +\infty)$ then the saddle is off the branch cuts and the straightforward application of the saddle point method leads to
\begin{align}\label{t}
F(x,t;q)=(q;q)_\infty(\varepsilon/2\pi)^{1/2}(1-x)^{1/2}e^{tx}e^{\frac1\varepsilon(\Li_2(1-x)+\log x\log(1-x))}
\times\left(1+O(\varepsilon)\right)\;.
\end{align}
Replacing $(q;q)_\infty$ by its asymptotic form \eqref{eq_qqas} and recalling that
\begin{align*}
{\Li}_2(x)+{\Li}_2(1-x)=\frac{\pi^2}{6}-\log x\log(1-x), \quad x\notin (-\infty, 0] \cup [1, +\infty),
\end{align*}
transforms  \eqref{t} to equation \eqref{F1}.

When $x\to 0$ the saddle $z=1-x$ approaches the branch point at $z=1$ and the above calculation breaks down. In this case we use the substitution $z=1-sx$ to transform \eqref{int_rep} to
\begin{align*}
F(x,t;q)=\frac{(q;q)_\infty}{2\pi i}\sqrt{x} \int_{\tilde{\mathcal C}}
\exp\left[
{\frac{\log x\log(1-sx)+\Li_2(1-sx)}{\varepsilon}+tsx}
\right]
\frac{ds}{\sqrt s} \times\left(1+O(\varepsilon)\right).
\end{align*}
The integrand now is an analytic function in the complex $s$-plane with two branch cuts
\begin{align*}
C_1=\{s=re^{i(\pi-\arg(x))}: \,\, r\ge 0\}\quad \text{and} \quad  C_2=\{s=re^{-i \arg(x) }: \,\, r\ge |x|^{-1}\}
\end{align*}
and has the saddle at $s=1$. If $|x|<1$ and $|\arg (x)|<\pi$ then the saddle is off the branch cuts and the saddle-point method again leads to \eqref{F1}.
\end{proof}

\begin{remark}
\label{remark:4.6}
Equation \eqref{F1} also holds for $x$ on the real negative axis. This can be shown by exploiting the fact that the coefficients of the series in power of $x$ are positive in this case. However, the asymptotic expansion in \eqref{F1} breaks down when $x\to1$. We will return to this matter later.
\end{remark}

\smallskip
Theorem \ref{thm:1.9} which asserts that suitably normalised counting function has normal distribution in the limit when $q\to 1^{-}$ is a simple corollary of Theorem \ref{afixed_t}.

\begin{proof}[Proof of Theorem \ref{thm:1.9}.] Consider the characteristic function $\E_{\varphi} \!\big\{ \exp({-x\widetilde N_q}) \big| \varphi (0)\!=\!a\big\}$ of the conditional distribution of
\begin{align*}
\widetilde N_q={(N_q-\mu_q(a))}/{\sigma_q(a)}
\end{align*}
with $\mu_q(a)$ and $\sigma_q(a)$ being as defined in equations \eqref{muqa} and \eqref{sigmaqa}. Evidently,
\begin{align*}
\E_{\varphi} \!\big\{ \exp({-x\widetilde N_q}) \big| \varphi (0)\!=\!a\big\}\! =\! e^{x{\mu_q(a)}/{\sigma_q(a)}} \E_{\varphi}\! \big\{ (1\!-\!x_q)^{N_q} \big| \varphi (0)\!=\!a\big\} \!=\! e^{x{\mu_q(a)}/{\sigma_q(a)}} F(x_q, |a|^2;q),
\end{align*}
where
\begin{align*}
x_q=1-\exp(-x/\sigma_q) \mathop{=}_{\,\,\, q\to 1^{-}}
 \sqrt{1-q}\, \big(\sqrt{2}\, x - x^2\sqrt{1-q} +O(1-q)\big).
\end{align*}
From Theorem \ref{afixed_t}, by expanding the dilogarithm in powers of $x_q$, we find that
\begin{align*}
F(x_q, |a|^2;q)  = \exp\left[
{-\frac{x_q}{1\!-\!q} - \frac{x_q^2}{4(1\!-\!q)} +O(1\!-\!q)}
\right]
=  \exp\left[
{-\frac{\sqrt{2} \, x }{\sqrt{1\!-\!q}} +\frac{x^2}{2} +O(\sqrt{1\!-\!q})}
\right].
\end{align*}
Also,
\begin{align*}
\exp \left[x\, \frac{\mu_q(a)}{\sigma_q(a)}\right]= \exp \left[\frac{\sqrt{2}\, (1+ O(1-q))}{\sqrt{1-q}}\right].
\end{align*}
Multiplying through these two asymptotic relations we find that for every $a$
\begin{align*}
\lim_{q\to 1^-}\E_{\varphi} \!\big\{ \exp({-x\widetilde N_q}) \big| \varphi (0)\!=\!a\big\} = e^{x^2/2}.
\end{align*}

By using Theorem \ref{afixed_t} one can also obtain the asymptotic behaviour of tails of the (conditional) distribution of $N_q$ around its expected value. For notational simplicity, it is preferable to use the leading asymptotic form of $\mu_q(a)$ (and also of $\sigma_q(a)$). The calculation presented in the proof of Theorem \ref{thm:1.9} works equally well if $\mu_q(a)$ and $\sigma_q(a)$ are replaced by the leading asymptotics and we find that
\begin{align*}
\lim_{q\to 1^{-}}\Prob \left\{ \left|N_q\!-\! \frac{1}{1\!-\!q}\right| \! \ge\!
\frac{\xi}{\sqrt{1\!-\!q}}  \,  \Big| \,  \varphi(0)\!=\!a \right\} = \frac{1}{\sqrt{\pi}}\int_{|u|\ge \xi}e^{-u^2} du\, .
\end{align*}
In the unconditional case (effectively $a=0$ above), the probability of large deviations from the normal law was found by Waknin in \cite{W23} by using the G\"artner-Ellis theorem, the standard tool of the large deviation theory. This requires asymptotic calculation of the suitably scaled logarithmic moment generating function of $N_q$ which in \cite{W23} (Lemma A.1) was achieved by exploiting the product structure of the moment generating function \eqref{pgfNq}. For $a\not=0$ the product structure is not there. Instead, in this case Theorem \ref{afixed_t} (see also Remark \ref{remark:4.6}) yields the desired rescaled logarithmic moment generating function which, for moderate and large deviations, turns out to be exactly the same as in the unconditional case, arriving at
\begin{align}\label{W<1}
\Prob \Big\{
\Big| N_q-\frac{1}{1-q} \Big| \ge \frac{\xi}{(1-q)^{\alpha}}\,  \Big|\,  \varphi (0)=a
\Big\}
= \exp\left[ -\frac{\xi^2\, (1+o(1))}{(1-q)^{2\alpha-1}}\right], \quad \alpha\in (1/2,1),
\end{align}
and
\begin{align}\label{W1}
\Prob \left\{
\left| N_q-\frac{1}{1-q} \right| \ge \frac{\xi}{(1-q)}\,  \Big|\,  \varphi (0)=a
\right\}
= \exp\left[
-\frac{(\xi+1) \Psi (1+\xi) (1+o(1))}{(1-q)}
\right]
\end{align}
where
\begin{align*}
\Psi(s)=s+W_0(-se^{-s}) +\frac{1}{s}\Li_2\! \big(1-e^{s+W_0(-se^{-s})}\big).
\end{align*}
\end{proof}
The asymptotic relations \eqref{W<1} and \eqref{W1} coincide with  those obtained by Waknin \cite{W23} for the unconditional case but are written in a slightly different form. We will return to \eqref{W<1} and \eqref{W1} in the next section and obtain more precise asymptotics which include pre-exponential factor.

In the unconditional case Waknin also obtained the large deviation rate function  in the regime when the deviations are in the scale much greater than the variance of $N_q$ (i.e., for $\alpha>1$). Extending that result to the conditional case requires asymptotic analysis of $F(x,t;q)$ in a scaling regime when $x$ approaches 1 at the same time as $q$ approaches 1, which is not currently available.

\subsection{Gumbel limit.}
Guided by Theorem \ref{formal}, we are led to consider the case when $x$ is real positive and $t$ is large. Correspondingly, we set $t=u/\varepsilon$ in \eqref{int_rep}. This gives a saddle point integral
\begin{align}\label{saddlepint}
F\left(x,\frac{u}{\varepsilon};q\right)=\frac{(q;q)_\infty}{2\pi i}\int_{\rho-i\infty}^{\rho+i\infty}\frac{\exp\left[{\varepsilon^{-1} f(z) }\right]}{\sqrt{1-z}}\;dz
\times\left(1+O(\varepsilon)\right)\;, \quad \varepsilon=-\log q,
\end{align}
where $\rho\in (0,1)$ and
\begin{align}
\label{leading}
f(z)=\log x\log z+{\Li}_2(z)+u(1-z).
\end{align}
As before, the integrand is analytic in the complex $z$-plane and has branch cuts $(-\infty,0]$ and $[1,+\infty)$ along the real axis unless $x=1$ in which case the only branch cut is $[1,+\infty)$. The saddle point equation for $f(z)$ is
\begin{align}
\label{saddleeqn}
\frac1 z \log x - \frac1 z \log(1-z)-u =0.
\end{align}
We find that it can be resolved in terms of the Lambert-$W$ function, the function that solves the equation $W\exp W=h$ for $W$. In the complex $h$-plane
this function is multivalued with infinitely many branches $W_k(h)$, $k\in \mathbb{Z}$, see e.g., \cite{CGHJK96}. We will prove below that the saddle points of $f(z)$ are expressed through $W_{0}$ and $W_{-1}$. These are the only branches that take real values; $W_0(h)$ is real-valued on the interval $ [-e^{-1},+\infty)$ and monotone increasing there, and $W_{-1}(h)$ is real-valued on $[-e^{-1},0)$ and monotone decreasing there, see Fig. \ref{Fig:6}.

\begin{figure}[ht]
\includegraphics[width=.6\linewidth]{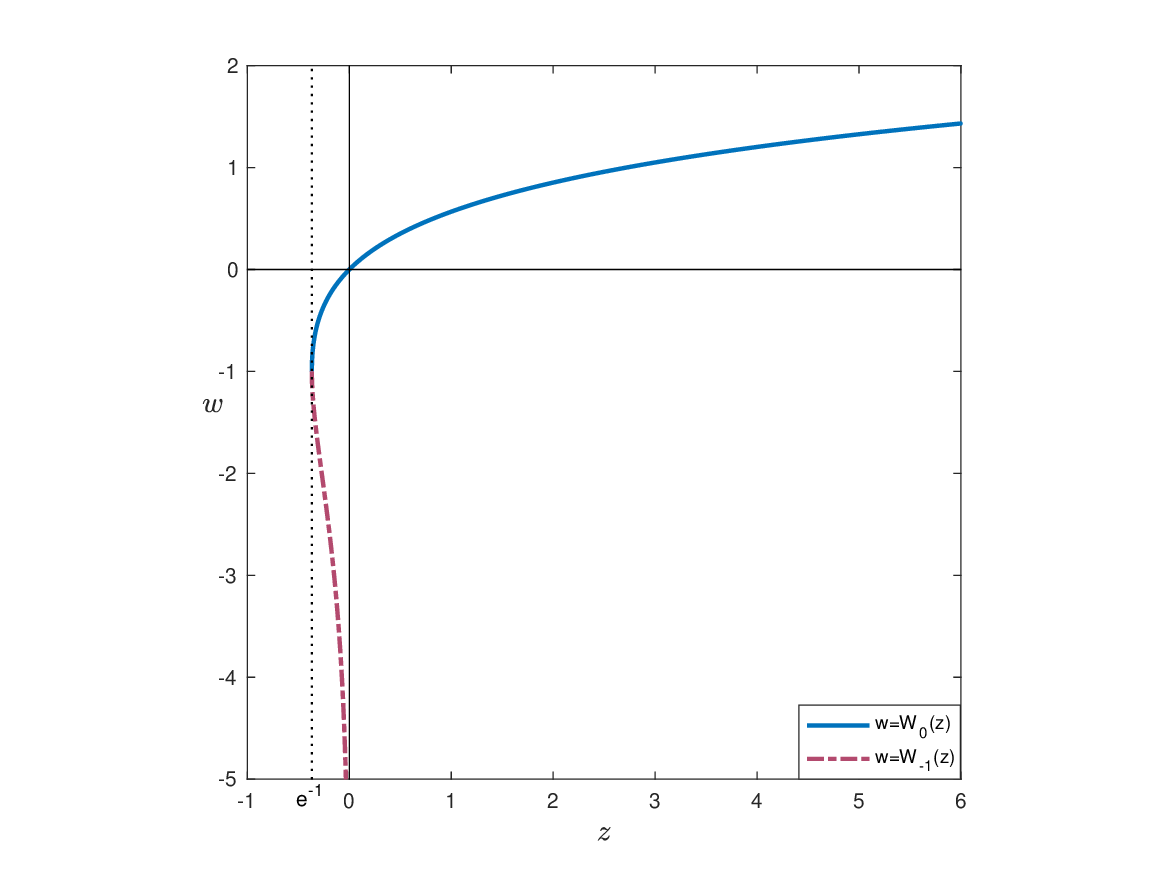}
\caption{
Two real branches of the Lambert-$W$ function. }
\label{Fig:6}
\end{figure}

\begin{lemma}
\label{Lemma:saddles}
Assume that $x>0$. Let
$u_k(x)=-W_{k} \left(\!-\frac{1}{ex}\right)$ and
\begin{align}\label{roots}
z_k(x,u)=1+\frac{1}{u}W_{k}(-xue^{-u})\:.
\end{align}
\begin{itemize}
\item[(a)] If $x>1$ and $u>0$ then the function $f(z)$
has two saddles $z_0(x,u)$ and $z_{-1}(x,u)$. These saddles are complex conjugated if $u_0(x)<u<  u_{-1}(x)$, otherwise they are real; furthermore, $z_0(x,u)=z_{-1}(x,u)$ if $u=u_0(x)$ or $u= z_{-1}(x,u)$.
\item[(b)] If $x=1$ then for every $u>0$ the function $f(z)$ has one saddle given by $z_{-1}(1,u)$ if $0<u\le 1$ and by $z_{0}(1,u)$ if $u\ge  1$.
This saddle is real.
\item[(c)]
For every $0<x<1$ and $u>0$ the function $f(z)$ has two saddles $z_0(x,u)$ and $z_{-1}(x,u)$. These saddles are real.
\end{itemize}
\end{lemma}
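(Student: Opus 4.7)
The plan is to reduce the saddle equation to a form solvable by the Lambert-$W$ function and then carry out a case-by-case analysis governed by the discriminant condition $|\!-\!xue^{-u}|\le 1/e$ that separates real from complex branches of $W$. The only real subtlety is a spurious root introduced when $x=1$.

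First I would compute
\[
f'(z)=\frac{\log x-\log(1-z)}{z}-u, \qquad z\in \mathbb{C}\setminus\!\big((-\infty,0]\cup[1,+\infty)\big),
\]
using $\Li_2'(z)=-\log(1-z)/z$, so that the saddle equation $f'(z)=0$ is equivalent to
\[
(1-z)e^{uz}=x.
\]
Setting $w=-u(1-z)$ converts this to $we^w=-xue^{-u}$, whose solutions are $w=W_k(-xue^{-u})$, $k\in\mathbb{Z}$. Hence every root is of the form \eqref{roots}. Multiplying $f'$ by $z$ can introduce the extraneous value $z=0$: indeed, $zf'(z)=\log x-\log(1-z)-uz$ vanishes at $z=0$ iff $x=1$. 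Since $f'(0)=1-u$ for $x=1$, the root $z=0$ is a genuine saddle only when $x=1$ and $u=1$; otherwise it must be discarded. This identifies $z=0$ with $z_0(1,u)$ when $u\le 1$ (as $W_0(-ue^{-u})=-u$ iff $-u\ge -1$) and with $z_{-1}(1,u)$ when $u\ge 1$.

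Next, I would use the elementary facts that $W_0$ is real-valued on $[-e^{-1},\infty)$ with values in $[-1,\infty)$, and $W_{-1}$ is real-valued on $[-e^{-1},0)$ with values in $(-\infty,-1]$, whereas for arguments below $-1/e$ the two branches become complex conjugates. Thus the saddles $z_0(x,u)$ and $z_{-1}(x,u)$ are real precisely when $xue^{-u}\le e^{-1}$ and coincide precisely when $xue^{-u}=e^{-1}$. A quick analysis of $g(u):=ue^{-u}$ (increasing on $(0,1)$, decreasing on $(1,\infty)$, with maximum $e^{-1}$ at $u=1$) then settles the three cases:
\begin{itemize}
\item[(a)] If $x>1$, then $1/(ex)<1/e$, so $ue^{-u}=1/(ex)$ has exactly two roots, $u_0(x)\in(0,1)$ and $u_{-1}(x)\in(1,\infty)$, given by $u_k(x)=-W_k(-1/(ex))$. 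For $u\in(0,u_0(x))\cup(u_{-1}(x),\infty)$ the inequality $xue^{-u}<e^{-1}$ holds, yielding two distinct real saddles; for $u\in(u_0(x),u_{-1}(x))$ the inequality reverses, giving a complex conjugate pair; coalescence occurs exactly at the endpoints.
\item[(b)] If $x=1$, then $xue^{-u}\le e^{-1}$ with equality only at $u=1$. Both $z_0(1,u)$ and $z_{-1}(1,u)$ are real, but by the discussion above one of them equals the spurious root $z=0$ (namely $z_0(1,u)$ when $u\le 1$ and $z_{-1}(1,u)$ when $u\ge 1$), leaving the single genuine saddle as stated.
\item[(c)] If $0<x<1$, then $xue^{-u}\le x/e<e^{-1}$ for all $u>0$, so $-xue^{-u}>-1/e$ strictly; moreover $x\ne 1$, so $z=0$ does not appear as a root. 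Hence both branches give real, distinct saddles.
\end{itemize}

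The computations are essentially routine once the correct substitution is made; the only step requiring care is the handling of the extraneous root in case~(b), which I expect to be the main technical point. All three cases follow by combining the reality/coalescence analysis of $W_0,W_{-1}$ with the monotonicity of $ue^{-u}$.
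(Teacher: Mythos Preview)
Your overall strategy matches the paper's: reduce the saddle equation to a Lambert-$W$ problem and then analyse the branches. There is, however, a genuine gap. You assert that $f'(z)=0$ (after clearing the factor $z$) is \emph{equivalent} to $(1-z)e^{uz}=x$, but this is only an implication: exponentiating $\log x=\log(1-z)+uz$ certainly gives $(1-z)e^{uz}=x$, while the converse can fail by an additive $2\pi i k$ in the imaginary part. The equation $we^w=-xue^{-u}$ therefore has infinitely many solutions $w=W_k(-xue^{-u})$, $k\in\mathbb{Z}$, and you never explain why only $k=0,-1$ produce genuine saddles of $f$. Your subsequent case analysis silently restricts to these two branches.

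The paper closes exactly this gap: it substitutes each candidate $z_k$ back into the logarithmic equation and uses the branch-discrimination identity $W_k(h)+\log W_k(h)=\log h+2\pi i k$ (with the special case for $k=-1$ on $(-1/e,0)$), together with a careful comparison of $\log(-w)$ and $\log w$, to show that $k=0$ and $k=-1$ are the \emph{only} branches for which the $2\pi i$-correction vanishes. Once you add this verification, your argument is complete; the rest---the discriminant analysis via the maximum of $ue^{-u}$ and the handling of the spurious root $z=0$ when $x=1$---is correct and essentially identical to the paper's.
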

\begin{proof}
Evidently, $z=0$ is a root of \eqref{saddleeqn} if and only if $x=1$ and $u=1$. This particular case is captured in part (b) (note that $W_0(-1/e)=W_{-1}(-1/e)=1$). It remains to find non-zero roots \eqref{saddleeqn}, and, from now on,  we assume that $z\not=0$. In this case equation \eqref{saddleeqn} is equivalent to
\begin{align}\label{saddleeqn1}
\log x = \log(1-z)+ uz\, .
\end{align}
On exponentiating and multiplying through by $-u\exp(-u)$,  this equation is transformed to
\begin{align}\label{saddleeqn2}
-uxe^{-u}=-(1-z)ue^{-(1-z)u}\;.
\end{align}
Every root of \eqref{saddleeqn1} is a root of equation \eqref{saddleeqn2} for $z$ but not every root of \eqref{saddleeqn2} is a root of equation \eqref{saddleeqn1}.
The roots of equation \eqref{saddleeqn2} are given by
\begin{align} \label{saddleW}
z=1+\frac1uW(-ux e^{-u}),
\end{align}
where $W(h)$ is the Lambert-$W$ function. Since this function has infinitely many branches $W_k$, our task now is to verify which of the roots $z_k(x,u)=1+ u^{-1}W_k(-ux e^{-u})$ solve equation \eqref{saddleeqn1}. On substituting $z_k(x,u)$ for $z$ in \eqref{saddleeqn1}, we obtain
\begin{align}\label{saddleeqn3}
\log x = \log(-u^{-1}W_k(-ux e^{-u})) + u + W_k(-ux e^{-u})\, .
\end{align}
Since $u$ is positive real,   $\log(-u^{-1}w)= -\log u +  \log(-w)$ for every complex $w$, and \eqref {saddleeqn3} can equivalently be written as
\begin{align}\label{saddleeqn4}
\log|h|  = W_k(h) +\log(-W_k(h)), \quad h=-xue^{-u}\, .
\end{align}
The branches of the Lambert-$W$ function can be discriminated by the relation (see Eq. 12 in \cite{JHC96})
\begin{align}\label{Wbranch}
W_k(h) + \log W_k(h) =
\begin{cases}
\log h, & \text{for } k=-1 \text{ and } h\in(-1/e, 0);\\
\log h + 2\pi i k,  &\text{ otherwise};
\end{cases}
\end{align}
where, as before, $\log z = \log|z| + i \arg (z) $ is the principal branch of the logarithm. This is almost the r.h.s. of \eqref{saddleeqn4}, except for the sign inside the logarithm. Since $\log(ab)$ may differ from  $\log a+ \log b$ in the complex plane, care needs to be taken when expressing $\log(-W_k(h))$ in terms of $\log W_k(h)$. To this end, we note that
\begin{align}\label{log}
\log(-w)  =
\begin{cases}
\log w - i \pi,  & \text{ if } \im w >0  \text{ or } w< 0,\\
\log w + i \pi, & \text{ if } \im w < 0 \text{ or } w > 0,
\end{cases}
\end{align}
where we use the convention that $\arg w \in (-\pi, \pi]$. The two main branches, $W_0$ and $W_{-1}$, are real-valued on the $[-e^{-1}, 0)$ and take there negative values. Therefore, by making use of \eqref{Wbranch} and \eqref{log}, for $h\in [-e^{-1}, 0)$,
\begin{align}\label{x=1}
W_k(h) + \log(-W_k(h))
= \log h -i\pi = \log|h|,  \quad k=0, -1.
\end{align}
If $h\in (-\infty, -e^{-1})$, i.e. $h$ is on the branch cut for $W_0(h)$, then $\Im W_{0}(h)>0$ and $\Im W_{-1}(h)<0$. Therefore, for $h<-1/e$
\begin{align}\label{x1a}
W_0(h) + \log(-W_0(h)) &
= \log h -i\pi =\log|h|\, ,\\
\label{x1b}
W_{-1}(h) + \log(-W_{-1}(h)) &
= \log h + i\pi =\log|h|\, .
\end{align}
The rest of the branches of $W(h)$ are complex-valued on the negative semi-axis: if $h\in (-\infty, 0)$ then $\Im W_k (h)<0$ for $k\le -2$ and $\Im W_k (h)> 0$ for $k\ge 1$. Therefore, for $h<0$
\begin{align*}
W_k(h) + \log(-W_k(h))
&= \log|h| + i2\pi(k+1), \quad k= -2, -3, \ldots\, ,\\
W_k(h) + \log(-W_k(h))
&= \log|h| + i2\pi k ,  \quad\quad \quad\,\, k= 1, 2, \ldots \, .
\end{align*}
The last two relations are evidently incompatible with equation  \eqref{saddleeqn4}. Therefore, only the two main branches of the Lambert-$W$ function in \eqref{saddleW} can yield roots of the saddle point equation.

If $x=1$ then $h\in [-e^{-1}, 0)$ in \eqref{saddleeqn3} and  \eqref{x=1} implies that both $z_0(1,u)=1+u^{-1}W_0(-u e^{-u})$
and $z_{-1}(1,u)=1+u^{-1}W_{-1}(-ue^{-u})$ solve equation \eqref{saddleeqn1}. However, $z_0(1,u)=0$ for every $0<u<1$ and is not a saddle, and similarly $z_{-1}(1,u)=0$, not a saddle, for every  $u>1$. The case of $u=1$ has already been dealt with. In this case $z=0$ is a saddle. This proves part (b).

If $x>1$ and $u\in (u_1(x), u_2(x))$ then $h<-1/e$ in \eqref{saddleeqn3}, and relations \eqref{x1a} -- \eqref{x1b} verify that both $z_0(x,u)$ and $z_{-1}(x,u)$ are saddles. These saddles are complex conjugated.

Finally, if $0<x<1$ or if $x>1$ and $u\in (0, u_1(x)) \cup (u_2(x),+\infty)$ then $h\in [-e^{-1}, 0)$ in \eqref{saddleeqn3}, and \eqref{x=1} verifies that both $z_{0}(x,u)$ and $z_{-1}(x,u)$ are saddles. These saddles are evidently real.
\end{proof}

With the saddle point integral \eqref{saddlepint} in mind, we now take a look at the location of the real saddles of $f(z)$. For illustration, Fig. \ref{Fig:7} depicts real saddles $z_s$ of $f(z)$ in the $(u,z_s)$-plane in the three characteristic cases of $0<x<1$, $x=1$ and $x>1$.

\begin{figure}[ht]
\includegraphics[width=.6\linewidth]{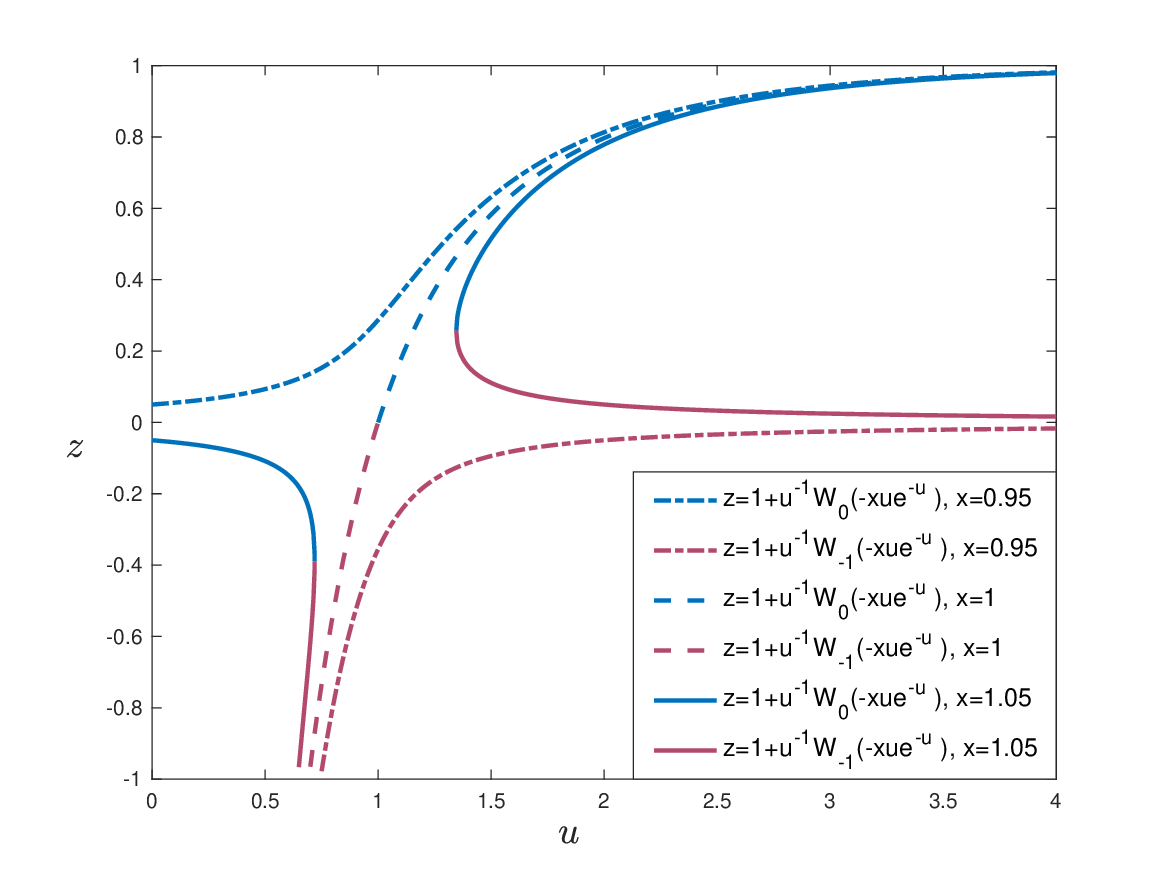}
\caption{
Graphs of the real saddles $z_s$ of $f(z)$ as functions of $u$ in the $(u,z_s)$-plane  for $x=0.95$ (dot-dashed line), $x=1$ (dashed line), and $x=1.05$ (solid line).
}
\label{Fig:7}
\end{figure}

Consider first the case when $x>1$. Note that $u_0(x)$ and $u_{-1}(x)$ are the two real roots of the equation $-xue^{-u}=-e^{-1}$,  $u_0(x)< 1 <u_{-1}(x)$, see Fig. \ref{Fig:6a}. Since $W_0(h)$ is monotone increasing  and $W_{-1}(h)$ is monotone decreasing on the interval $-1/e\le h<0$ and $W_0(-ue^{-u})=-u$ if $u<1$ and $W_{-1}(-ue^{-u})=-u$ if $u>1$,  we find that
\begin{align}\label{82}
    & z_{-1}(x,u) < 1-\frac 1 u < z_0(x,u)< 0 \quad \quad  \text{if $0< u < u_0(x)$, }\\
    \label{83}
0<& z_{-1}(x,u) <  1-\frac 1 u <  z_0(x,u) <1  \quad \quad  \text{if $u>  u_{-1}(x)$, }
\end{align}
and $z_{-1}(x,u)=1-\frac 1u=z_{0}(x,u)$  if and only if $u=u_0(x)$ or $u=u_{-1}(x)$.
\begin{figure}[ht]
\includegraphics[width=.6\linewidth]{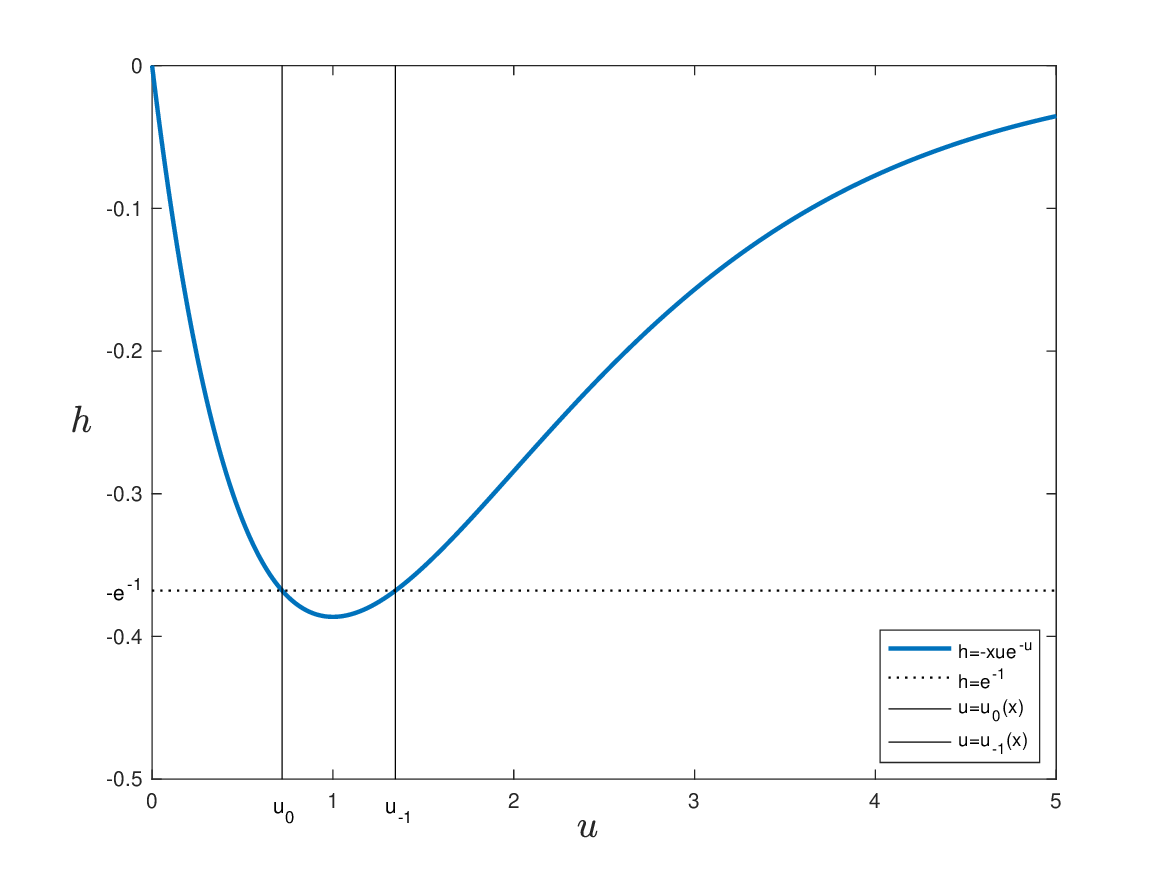}
\caption{
Graph of the function $h(u)=-xue^{-u}$ for $x=1.05$
}
\label{Fig:6a}
\end{figure}
When $0<u\le u_0(x)$, both $z_{-1}(x,u)$ and $z_{0}(x,u)$ are on a branch cut of $f(z)$. Therefore, in this case there are no suitable saddle points to work with. In contrast, when $u\ge u_{-1}(x)$ both saddle points are off the branch cuts of the integrand in \eqref{saddlepint}.

Turning to $0<x<1$, in this case $-xue^{-u} > -ue^{-u}$ for all $u>0$ and we find that
\begin{align}\label{84}
z_{-1}(x,u) < 1-\frac{\max (u,1)}{u} \le 0 \le 1-\frac{\min (u,1)}{u} < z_0(x,u) < 1.
\end{align}
We see that in this case the saddle point $z_0(x,u)$ is suitable while $z_{-1}(x,u)$ is not, as the latter lies on a branch cut of $f(z)$.

And, finally, when $x=1$ the integrand in \eqref{saddlepint} has only one branch cut $[1,+\infty)$ and any saddle in the interval $(-\infty, 1)$ would be good. In this case $f(z)$ has only one saddle (Lemma \ref{Lemma:saddles} part (b)). Exploiting the inequalities below
\begin{align}\label{85}
&0< 1-\frac{1}{u} < z_0(1,u) < 1 \quad \quad\quad \quad \quad \text{if $u>1$,}\\
\label{86}
&\phantom{0< 1-\frac{1}{u} < } z_{-1}(1,u) < 1-\frac{1}{u}<0 \quad \text{if  $0<u<1$,}
\end{align}
and noting that $z_0(1,1)=z_{-1}(1,1)=0$, this saddle can be seen to be off the branch cut, and hence is suitable.

\smallskip
With the above, we now perform the saddle-point approximation to arrive at
\begin{theorem}\label{main}
Let $\varepsilon=-\log q$. If $(x,u)\in D=(0,1]\!\times \!(0,+\infty)\cup (1,+\infty)\! \times \![u_{-1}(x), +\infty)$
 then, in the scaling limit $\varepsilon\to  0^{+}$, $t \varepsilon=u>0$,
\begin{align}\label{qas1}
F\left(x,\frac u \varepsilon ;q\right)= \displaystyle{\frac{\exp\left\{{\varepsilon^{-1} \big[\log x\log z_s+{\Li}_2(z_s)+u(1-z_s)-\pi^2/6\big]}\right\}}{\sqrt{1+(1-u)(1-z_s)/z_s}}} \left(1+O(\varepsilon)\right),
\end{align}
where
\begin{align}
\label{saddle}
z_s=
\begin{cases}
z_0(x,u) &  \text{if }  x\not=1 \lor (x=1\land u\ge 1) \\
z_{-1}(x,u) & \text{if }  x=1\land u\in (0,1)
\end{cases}
\end{align}

\end{theorem}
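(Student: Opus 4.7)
The plan is to apply Laplace's method to the integral representation provided by Lemma \ref{Lemma:intrep}. Setting $t=u/\varepsilon$ there gives
\begin{align*}
F(x,u/\varepsilon;q)=\frac{(q;q)_\infty}{2\pi i}\int_{\rho-i\infty}^{\rho+i\infty}\frac{\exp[\varepsilon^{-1}f(z)]}{\sqrt{1-z}}\,dz\times(1+O(\varepsilon)),
\end{align*}
with $f(z)$ defined by \eqref{leading}. First I would check that for $(x,u)\in D$ the prescribed saddle $z_s$ of \eqref{saddle} is real, off both branch cuts of the integrand, and reachable from the initial contour $\Re z=\rho\in(0,1)$ by a horizontal translation through which no singularities are crossed. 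For $z_s=z_0(x,u)$ this follows from the inequalities \eqref{83}--\eqref{85}, which place $z_0$ strictly inside $(0,1)$. In the remaining case $x=1$, $u<1$, the saddle $z_{-1}(1,u)$ is negative by \eqref{86}, but this is harmless because the logarithmic branch cut at the origin in \eqref{Fcont} is absent when $x=1$ (as $\log x=0$), and $\Li_2(z)+u(1-z)$ is analytic on $(-\infty,1)$.

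Next I would compute the curvature at the saddle. Differentiating $f'(z)=(\log x-\log(1-z))/z-u$ yields
\begin{align*}
f''(z)=-\frac{\log x-\log(1-z)}{z^{2}}+\frac{1}{z(1-z)},
\end{align*}
and substituting the saddle-point relation $\log x-\log(1-z_s)=uz_s$ collapses this to $f''(z_s)=(1-u(1-z_s))/(z_s(1-z_s))$. The inequalities \eqref{83}, \eqref{85}, \eqref{86} show that $f''(z_s)>0$ on $D$ (in the $z_{-1}$ case both numerator and denominator are negative), so the original vertical contour is precisely the direction of steepest descent at $z_s$. I would then shift the contour to $\Re z=z_s$ (extending to negative $\Re z$ in the $x=1,u<1$ case), justified by the decay $\Re \Li_2(z_s+iy)\sim -\tfrac{1}{2}(\log|y|)^2\to-\infty$ derived from the inversion identity for $\Li_2$, ensuring the integrand vanishes fast enough at vertical infinity.

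The standard Laplace estimate on the shifted contour then gives
\begin{align*}
\frac{1}{2\pi i}\int_{z_s-i\infty}^{z_s+i\infty}\frac{\exp[\varepsilon^{-1}f(z)]}{\sqrt{1-z}}\,dz=\frac{\exp[\varepsilon^{-1}f(z_s)]}{\sqrt{2\pi\varepsilon^{-1}f''(z_s)}\,\sqrt{1-z_s}}\bigl(1+O(\varepsilon)\bigr).
\end{align*}
Combining this with the Watson asymptotic $\log(q;q)_\infty=-\pi^{2}/(6\varepsilon)+\tfrac12\log(2\pi/\varepsilon)+O(\varepsilon)$ from \eqref{eq_qqas}, the $\sqrt{2\pi/\varepsilon}$ and $\sqrt{\varepsilon/(2\pi)}$ cancel, and the algebraic identity
\begin{align*}
\frac{1}{\sqrt{f''(z_s)(1-z_s)}}=\sqrt{\frac{z_s}{1-u(1-z_s)}}=\frac{1}{\sqrt{1+(1-u)(1-z_s)/z_s}}
\end{align*}
together with $f(z_s)-\pi^{2}/6$ in the exponent reproduces \eqref{qas1}.

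The principal obstacle I anticipate is the boundary $u=u_{-1}(x)$ for $x>1$, where the two saddles of Lemma \ref{Lemma:saddles}(a) coalesce at $z_s=1-1/u$; there $1-u(1-z_s)=0$, $f''(z_s)=0$, and the Gaussian approximation degenerates, so \eqref{qas1} can only hold strictly away from this coalescence (a uniform description would call for an Airy-type resolution). A secondary technical issue is quantifying the $O(\varepsilon)$ remainder uniformly on compact subsets of $D$, which requires bounding the cubic Taylor remainder of $f$ at $z_s$ along the steepest descent ray together with the auxiliary error $R_1(x,q)$ from Lemma \ref{lem_qPochas}; both are standard estimates but must be carried out with care because $\Li_2$ is non-local and the steepest descent tails are only logarithmically small.
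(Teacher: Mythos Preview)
Your approach is essentially the same as the paper's: apply the saddle-point method to the integral representation of Lemma~\ref{Lemma:intrep}, compute $f''(z_s)$, verify its sign from the inequalities \eqref{83}--\eqref{86}, and combine with the Watson asymptotic \eqref{eq_qqas} for $(q;q)_\infty$. Your handling of the $x=1$, $u<1$ case (noting that the cut at the origin disappears since $\log x=0$) and your remark about the degeneration at $u=u_{-1}(x)$ are both more explicit than the paper's treatment.

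One point you gloss over that the paper does address: for $x>1$ and $u>u_{-1}(x)$, Lemma~\ref{Lemma:saddles}(a) gives \emph{two} real saddles $0<z_{-1}(x,u)<z_0(x,u)<1$, both off the branch cuts. Since $f''(z_{-1})<0$ while $f''(z_0)>0$, and $f'<0$ on $(z_{-1},z_0)$ so that $f(z_0)<f(z_{-1})$, the paper argues that after deforming the contour the contribution from $z_{-1}$ is negligible and only $z_0$ survives. You should indicate why the second saddle does not interfere with your shifted vertical contour through $z_0$; otherwise the argument is incomplete in this regime.
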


\begin{proof} We shall use the saddle point integral approximation \eqref{saddlepint} of function $F(x,t;q)$.
Expanding $f(z)$ to quadratic order in $(z-z_s)$,
\begin{align} \label{quadratic1}
f(z)=f(z_s)+f^{\prime\prime} (z_s) \frac{(z-z_s)^2}2+O((z-z_s)^3),
\end{align}
we find
\begin{align} \label{quadratic}
f^{\prime\prime} (z_s)= \frac{1-u +uz_s}{z_s(1-z_s)}\, .
\end{align}

Consider first $(x,u)\in (0,1)\!\times \!(0,+\infty)$. In this case there is only one suitable saddle point $z_0=z_0(x,u)$, and it is evident from  \eqref{84} that $f^{\prime\prime} (z_0) >0$.

If $x=1$ then $f(z)$ has a single saddle point $z_s=z_0(x,u)$ if $u\ge 1$ and $z_s=z_{-1}(x,u)$ if $0<u<1$. It is evident from \eqref{85}--\eqref{86} that $f^{\prime\prime} (z_s) >0$.

If $ (1,+\infty) \times [u_{-1}(x), +\infty)$ then in this case there are two suitable saddle points,
\begin{align*}
 0< z_{-1}(x,u) < z_0(x,u)<1.
 \end{align*}
 It is evident from \eqref{83} that $f^{\prime\prime} (z_{-1}) < 0$ and $f^{\prime\prime} (z_0) > 0$. The derivative of $f(z)$ is negative on the interval $z_{-1} <z<z_0$. Therefore $f(z_0)<f(z_{-1})$ and hence the saddle point $z_{-1}$ is insignificant in that if the path of integration is deformed to cross both saddles, its contribution to the integral in the limit $\varepsilon \to 0^{+}$ will be exponentially small compared to that of $z_0$. Hence, the saddle at $z_{-1}(x,u)$ can be ignored.

Thus we have a suitable saddle for the entire parameter range $D$. Choosing now a suitable path of integration, as we cross the saddle point $z_s$ in the direction of the imaginary axis, the contribution of the saddle point to the integral in \eqref{saddlepint} is
\begin{align*}
i\, \sqrt{\frac{2\pi \varepsilon}{f^{\prime\prime} (z_s)}} \frac{\exp[\varepsilon^{-1} f(z_s)] }{\sqrt{1-z_s}} = i\,  \sqrt{2\pi \varepsilon} \, \frac{\exp\{\varepsilon^{-1}[ \log x \log z_s + {\Li}_2 (z_s) +u(1-z_s)]\} }{\sqrt{1+(1-u)(1-z_s)/z_s}},
\end{align*}
and recalling \eqref{eq_qqas} we arrive at \eqref{qas1}.
\end{proof}

The desired result can now be stated as a corollary.

\begin{corollary}
\label{gumbel}
\phantom{a}
\begin{itemize}
\item[(a)]
For $x>0$ and real $s$,
\begin{align}\label{g2}
\lim_{\varepsilon\to0^+}F(x,({s-\log\varepsilon})/{\varepsilon};e^{-\varepsilon})=\exp \left({-xe^{-s}}\right).
\end{align}
\item[(b)]
The counting function $N_q$ of the absolute squares of zeroes of the Gaussian power series ${\varphi (z)}=\sum_{k\ge 0} c_k z^k $ conditioned on the event that $|g(0)|^2=t$ converges in distribution, in the scaling limit
\begin{align}\label{sl_t}
q\to 1^{-}\,\, \text{ and }\,\, t=\frac{s-\log \log \frac1 q }{\log \frac 1 q},
\end{align}
 to Poisson$(e^{-s})$.
\end{itemize}
\end{corollary}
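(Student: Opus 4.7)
The plan is to obtain part (a) from Theorem \ref{main} applied with $u=s-\log\varepsilon$, and then to derive part (b) from (a) via the continuity theorem for probability generating functions.

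For part (a), I would substitute $u=s-\log\varepsilon$ into \eqref{qas1}. Since $u\to+\infty$ as $\varepsilon\to 0^+$, the pair $(x,u)$ eventually enters the domain $D$ for every fixed $x>0$, and the relevant saddle is $z_s=z_0(x,u)=1+u^{-1}W_0(-xue^{-u})$. Using $W_0(h)=h+O(h^2)$ as $h\to 0$ gives $\delta:=1-z_s=xe^{-u}(1+O(ue^{-u}))=xe^{-s}\varepsilon(1+o(1))$; in particular $\delta\to 0$ and $u\delta\to 0$. Combining the reflection identity $\Li_2(z)+\Li_2(1-z)=\pi^2/6-\log z\log(1-z)$ with the Taylor expansions $\Li_2(\delta)=\delta+O(\delta^2)$ and $\log(1-\delta)=-\delta+O(\delta^2)$ yields
\[
\log x\,\log z_s+\Li_2(z_s)+u(1-z_s)-\frac{\pi^2}{6}=\delta\bigl(\log(\delta/x)+u-1\bigr)+O(\delta^2|\log\delta|)=-\delta\bigl(1+o(1)\bigr),
\]
where the last equality uses $\log(\delta/x)+u=O(ue^{-u})=o(1)$. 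Dividing by $\varepsilon$ and recalling $\delta/\varepsilon\to xe^{-s}$ drives the exponent in \eqref{qas1} to $-xe^{-s}$, while the prefactor $\bigl(1+(1-u)(1-z_s)/z_s\bigr)^{-1/2}=\bigl(1-u\delta+o(1)\bigr)^{-1/2}\to 1$. This proves \eqref{g2}.

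For part (b), identity \eqref{cmgf} gives $\E_\varphi\bigl\{y^{N_q}\bigm||\varphi(0)|^2=t\bigr\}=F(1-y,t;q)$. Under the scaling \eqref{sl_t} one has $\varepsilon=-\log q\to 0^+$ and $t\varepsilon=s-\log\varepsilon$, so part (a) yields
\[
\E_\varphi\bigl\{y^{N_q}\bigm||\varphi(0)|^2=t\bigr\}\longrightarrow\exp\bigl(-(1-y)e^{-s}\bigr)\qquad\text{for every }y\in[0,1),
\]
and the right-hand side is the probability generating function of Poisson$(e^{-s})$. Since $N_q$ takes values in $\mathbb{Z}_{\ge 0}$, the continuity theorem for probability generating functions delivers the asserted convergence in distribution.

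The main technical obstacle I anticipate is that Theorem \ref{main} is stated for fixed $u$, whereas $u=s-\log\varepsilon$ grows logarithmically with $1/\varepsilon$ and the saddle $z_s$ approaches the branch point $z=1$ at rate $\delta\asymp\varepsilon$. The Gaussian width $\sqrt{\varepsilon/f''(z_s)}\asymp\sqrt{\varepsilon\delta}$ thus becomes comparable to the distance $\delta$ to the branch point, and the $O(\varepsilon)$ remainder in \eqref{qas1} is no longer automatically valid in this joint regime. The cleanest workaround is to revisit Lemma \ref{Lemma:intrep} and perform the saddle-point estimate after the rescaling $z=1-w\varepsilon$: in the variable $w$ the exponent becomes $\pi^2/(6\varepsilon)+w\bigl(\log(w/x)+s-1\bigr)+o(1)$, which has a non-degenerate unit-scale saddle at $w_s=xe^{-s}$ with second derivative $e^s/x$. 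The resulting genuine Gaussian saddle-point contribution, combined with the asymptotics \eqref{eq_qqas} of $(q;q)_\infty$, reproduces $\exp(-xe^{-s})$.
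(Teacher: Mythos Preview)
Your approach is essentially the same as the paper's: both substitute $u=s-\log\varepsilon$ into Theorem~\ref{main}, expand the saddle $z_s=1+u^{-1}W_0(-xue^{-u})$ using $W_0(h)=h+O(h^2)$, and track the exponent to $-xe^{-s}$; for part~(b) the paper identifies $F$ with the moment generating function while you use the probability generating function, a cosmetic difference. Your use of the reflection identity to package the exponent as $\delta(\log(\delta/x)+u-1)+O(\delta^2|\log\delta|)$ is a slightly cleaner bookkeeping than the paper's term-by-term expansion of $\Li_2(z_s)$, but the two computations are equivalent.

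Your uniformity concern is well taken and, if anything, goes beyond the paper: the paper simply invokes Theorem~\ref{main} with the moving parameter $u=s-\log\varepsilon$ and tracks the resulting error terms formally (arriving at $1+O(\varepsilon\log\varepsilon)$) without separately justifying that the $O(\varepsilon)$ remainder in \eqref{qas1} is uniform as the saddle approaches the branch point $z=1$. Your proposed rescaling $z=1-w\varepsilon$ applied directly to the integral of Lemma~\ref{Lemma:intrep} is a sound way to close this gap rigorously, and is arguably more transparent than appealing to Theorem~\ref{main} at all in this regime.
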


\begin{proof} Part (a).
Choose $u=s-\log\varepsilon$ in Theorem \ref{main}. For $\varepsilon\to 0^+$ with $s$ fixed, $u\to\infty$ and we expand
\begin{align*}
z_s=1+u^{-1} W_0(-xue^{-u})=1-xe^{-u}+ O(e^{-2u}),
\end{align*}
and
\begin{align*}
\log(x)\log(z_s) &= -x e^{-u} +O(e^{-2u}),\\
u(1-z_s) &= xue^{-u}  +O(ue^{-2u}), \\
1+(1-u)(1-z_s)/z_s & = 1+O(e^{-u}).
\end{align*}
Recalling that
\begin{align*}
{\Li}_2(1-\delta) = \pi^2/6 + \delta(\log \delta -1 ) +O(\delta^2), \quad \text{when } \delta \to 0^+,
\end{align*}
we find that
\begin{align*}
{\Li}_2(z_s) =   \pi^2/6 +x (\log (x) -1) e^{-u} - x u e^{-u} +O(ue^{-2u}).
\end{align*}
Therefore,
\begin{align}
F\left(x,{u}/{\varepsilon}; e^{-\varepsilon}\right) = \exp(-\varepsilon^{-1}xe^{-u}) \times
\big( 1+ \varepsilon^{-1} O(ue^{-2u})\big) \big(1+O(e^{-u})\big)
\end{align}
Now inserting $u=s-\log\varepsilon$, we find that
\begin{align*}
F\left(x,(s-\log\varepsilon)/\varepsilon;e^{-\varepsilon}\right)=\exp({-xe^{-s}})\times\left(1+O(\varepsilon\log\varepsilon)\right),
\end{align*}
and \eqref{g2} follows.

\smallskip

Part (b). The l.h.s. of \eqref{g2} is the conditional moment generating function of $N_q$ and the r.h.s. is the moment generating function of Poisson$(e^{-s})$.
\end{proof}

\begin{remark}
The scaling limit \eqref{sl_t} is essentially the same as the scaling limit \eqref{sl_p} considered in Section \ref{Sec:3}. To see that we note that $t\to+\infty$ in the scaling limit \eqref{sl_t}. In this range the relation
\begin{align*}
t=\frac{s-\log  \varepsilon}{ \varepsilon}, \quad \varepsilon=\log \frac 1 q,
\end{align*}
can be written as $\varepsilon=t^{-1}W_0(te^s) $. where $W_0$ is the main branch of the Lambert-$W$ function. Recalling that
\begin{align*}
W_0(t) = \log t -\log \log t +O ((\log t)^{-1} \log \log t ),
\end{align*}
we find that
\begin{align*}
\varepsilon t = s+\log t - \log \log t + O ((\log t)^{-1} \log \log t )
\end{align*}
Hence,
\begin{align*}
q= 1- \frac{\log t - \log \log t +s}{t}  + O\left(\frac{\log \log t}{t\log t}\right)
\end{align*}
which, asymptotically, is equivalent to \eqref{sl_p}.

\end{remark}

\subsection{Hole Probability: Proof of Theorem \ref{Thm:hole_pr}.}
Next, we return to the case of fixed $t>0$ and investigate the asymptotic behaviour of the series $F(x,t;q)$ \eqref{Fext} for $x=1$ in the limit $q\to 1^{-}$. Writing
\begin{align}\label{F1a}
F(1,t;q)=  I(t;q) (q;q)_\infty \big(1\!+\!O(\varepsilon)\big), \quad \varepsilon = - \log q\, ,
\end{align}
see Lemma \eqref{Lemma:intrep}, we will focus on the saddle-point integral
\begin{align*}
I(t;q)\!=\!\frac{1}{2\pi i}\!\!\int_{\rho-i\infty}^{\rho+i\infty}
\!\!\!\exp\left[{\frac{{\Li}_2(z)}{\varepsilon}\!+\!t(1\!-\!z)}\right]
\frac{dz}{\sqrt{1\!-\!z}}, \quad\quad \rho \in (0,1).
\end{align*}
The function $\exp t(1-z)$ behaves violently in the neighbourhood $z=\infty$, the saddle point of ${\Li}_2(z)$, and  needs to be included in our considerations. Correspondingly, we write
 \begin{align}\label{saddlepint1}
I(t;q)\!=\!\frac{1}{2\pi i}\!\!\int_{\rho-i\infty}^{\rho+i\infty}
\frac{\exp\left[\varepsilon^{-1} f(z)\right]}{\sqrt{1\!-\!z}}\, dz,
\end{align}
where
\begin{align}\label{leading1}
f(z)={\Li}_2(z) + u(1-z), \quad\quad u=t\varepsilon.
\end{align}
We are back in the domain of Theorem \ref{main},
albeit now with $u=O(\varepsilon)$. It follows from Lemma \ref{Lemma:saddles}, part (b), that for $\varepsilon$ small enough, $f(z)$ has a unique saddle point given by
\begin{align*}
z_s=1+\frac{W_{-1}(-ue^{-u})}{u}, \quad u=t\varepsilon.
\end{align*}
In the limit $\varepsilon \to 0^{+}$, $z_s$ diverges to $-\infty$ as $\log (u)/u$. We will need a more precise asymptotic form of $z_s$. By expanding  $W_{-1}(-ue^{-v})$ in powers of $v$ about $v=0$, we obtain
\begin{align}\label{zse}
z_s=\frac{w}{u}+ \frac{1}{1+w} +\frac{wu}{2(1+w)^3} + O(u^2) = \frac{w}{u}\left[ 1+ \frac{u}{w(1+w)} + \frac{u^2}{2(1+w)^3}+O(u^3) \right],
\end{align}
where
\begin{align*}
w=W_{-1}(-u) = \log u - \log |\!\log u |  + O\left(\frac{\log |\!\log u | }{\log u}\right).
\end{align*}
Variable $w$ accounts for the additional scale $\log u$.

The asymptotic form of the dilogarithm at the saddle point can be worked out  by recalling the identity
\begin{align*}
{\Li}_2 (z) + {\Li}_2(1/z)=-\frac{\pi^2}{6} -\frac{1}{2} \log^2 (-z), \quad\quad z\notin [0,+\infty).
\end{align*}
By the definition of $w$, $we^w=-u$ and, hence, $\log (-w/u)=-w$. Now, taking into account \eqref{zse}, we find that
\begin{align*}
\log^2 (-z_s)= w^2-\frac{2u}{1+w} + O(u^2),
\end{align*}
which together with ${\Li}_2 (1/z_s) = u/w +O(u^2)$ and  $u(1-z_s)=-w+uw/(1+w) +O(u^2)$ leads to the asymptotic form of $f(z)$ at its saddle point
\begin{align*}
f(z_s) = -\frac{w^2}{2}-w -\frac{\pi^2}{6} +u -\frac{u}{w}  + O(u^2)\, .
\end{align*}
Keeping in mind the saddle point approximation of the integral in \eqref{saddlepint1}, we revisit the quadratic expansion \eqref{quadratic1} of $f(z)$  about $z_s$. We find that
\begin{align}\label{r1}
f^{\prime\prime} (z_s) &= & \frac{1}{(1-z_s) z_s} -\frac{u}{z_s} &=  -(1+w) \left(\frac{u}{w}\right)^2 + \frac{1}{1+w} \left(\frac{u}{w}\right)^3 +O\left(u^4\right) \\
\label{r2}
f^{\prime\prime\prime} (z_s)&= & \frac{1}{(1-z_s)^2 z_s}-\frac{2}{(1-z_s) z_s^2} +\frac{2 u}{z_s^2} &=  (2w+3) \left(\frac{u}{w}\right)^3 +O\left(u^4\right)
\end{align}
We see that that the quadratic term $\frac{1}{2\varepsilon}f^{\prime\prime} (z_s)(z-z_s)^2$ vanishes in the limit $\varepsilon \to 0^{+}$.  Reflecting on \eqref{r1} -- \eqref{r2},  we are prompted to transform the variable of integration in \eqref{saddlepint1}. Introducing
\begin{align*}
\delta={w}/{u}, \quad  \zeta =z/\delta, \quad \text{and} \quad \hat f (\zeta) = f\left(\delta \zeta\right),
\end{align*}
we write
 \begin{align}\label{saddlepint2}
I(t;q)=\frac{-\delta}{2\pi i}\!\!\int_{\rho-i\infty}^{\rho+i\infty}
\frac{\exp\big[\varepsilon^{-1} \hat f(\zeta)\big]}{\sqrt{1- \delta \zeta}}\, d\zeta, \quad  \rho \in (-\infty, 0),
\end{align}
where
\begin{align*}
\hat f(\zeta)={\Li}_2\left(\delta \zeta\right) + u(1-\delta \zeta).
\end{align*}
Evidently, the function $\hat f(\zeta)$ has a unique saddle point determined by the relation $z_s=\delta \zeta_s$,
\begin{align*}
\zeta_s=  1+ \frac{u}{w(1+w)} + O(u^2).
\end{align*}
Expanding the exponent in integrand in \eqref{saddlepint2} about the saddle point, we obtain
\begin{align*}
\hat f(\zeta) = \hat f(\zeta_s) +\frac{1}{2} \hat f^{\prime\prime} (\zeta_s) (\zeta-\zeta_s)^2 + O((\zeta-\zeta_s)^3),
\end{align*}
with $\hat f  (\zeta_s) = -\frac{w^2}{2}-w -\frac{\pi^2}{6} +u -\frac{u}{w}  + O(u^2) $ and
$\hat f^{\prime\prime}  (\zeta_s)  = -(1+w) +O\left(\frac{u}{w^2}\right)$.
Now, it is plain to see that the large parameter in the saddle point integral \eqref{saddlepint2} is given by $(1+w)/\varepsilon$,
\begin{align*}
-(1+w)/\varepsilon \sim - \log (\varepsilon) / \varepsilon ,  \quad\quad \text{as } \varepsilon \to 0^{+},
\end{align*}
so that by choosing a suitable path of integration to pass through the saddle point and using the quadratic approximation in the exponent in  \eqref{saddlepint2}  we arrive at
\begin{align*}
I(t;q)
 & =\sqrt{\frac{\varepsilon}{2\pi}} \sqrt{\frac{1}{u (1-(\delta u)^{-1})} \frac{1}{\zeta_s(1-(\delta \zeta_s)^{-1}}} \exp \left[ \frac{1}{\varepsilon} \hat f (\zeta_s) \right] \left(1+ O \left(\frac{\varepsilon}{\log \varepsilon} \right) \right)\\
 &= \sqrt{\frac{\varepsilon}{2\pi}} \frac{1}{\sqrt{u}}  \exp \left[ \frac{1}{\varepsilon} \left( -\frac{w^2}{2}-w -\frac{\pi^2}{6} + u -\frac{u}{w}  \right) \right] (1+O(u))
\end{align*}
Recalling now \eqref{F1a} and \eqref{eq_qqas} and noting that $u=t\varepsilon$ and $w=W_{-1}(-u)$, we finally arrive at the desired asymptotic form \eqref{gappr2m} of $F(1,t;q)$ in the limit $\varepsilon = -\log q \to 0^{+}$. This completes our proof of Theorem \ref{Thm:hole_pr}.

\section{Tail asymptotics for $N_q$}
\label{section5}
Let $0<q<1$ and $t\geq0$. In this section we analyse the probabilities
\begin{align*}
P_k(q)= \Prob \{N_q\!=\!0 \}=\left.\frac{(-1)^k}{k!}\frac{d^k}{dx^k}\right|_{x=1}\!\!F(qx,0;q)
\end{align*}
and
\begin{align*}
 P_k(t;q) =  \Prob \{N_q\!=\!0 \big| |{\varphi (0)}|^2 \!=\!t\}= \left.\frac{(-1)^k}{k!}\frac{d^k}{dx^k}\right|_{x=1}\!\!F(x,t;q)
\end{align*}
in three regimes: (i) $q$ is fixed and $k\to \infty$, (ii) $k$ is fixed and $q\to 1^{-}$, and (iii) $k\to \infty$ and $q\propto 1/k $. We note that $P_0(q)=(q;q)_\infty$ and
$P_{k+1}(0;q)=P_{k}(q)$ for $k\ge 0$.

\smallskip

Most of our analysis is based on the integral representation of $F(x,t;q)$ given in Lemma \ref{lemma2}.
\begin{lemma}
For every $q\in (0,1)$ and $k\ge 0$
\begin{align*}
P_k(q)
=\sum_{m=k}^\infty(-1)^{m-k}\binom mk\frac{q^{\binom{m+1}2}}{(q;q)_m}
=\frac1{k!}\frac{(q;q)_\infty}{2\pi i}\int_{\mathcal{C}}\frac{\prod_{l=0}^{k-1}\log(zq^l)}{z(z;q)_\infty(\log q)^k}dz
\end{align*}
and
\begin{align*}
P_k(t;q)
=\sum_{m=k}^\infty(-1)^{m-k}\binom mk\frac{q^{\binom m2}}{(q;q)_m}e^{t(1-q^{-m})}
=\frac1{k!}\frac{(q;q)_\infty}{2\pi i}\int_{\mathcal{C}}\frac{\prod_{l=0}^{k-1}\log(zq^l)}{(z;q)_\infty(\log q)^k}e^{t(1-z)}dz
\end{align*}
\end{lemma}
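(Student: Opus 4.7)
Both equalities in the lemma are read off from the moment generating function $F(x,t;q)$ of equation \eqref{Fext}. By \eqref{cmgf}, $F(x,t;q) = \sum_{k\ge 0} P_k(t;q)\,(1-x)^k$, and by \eqref{mgfa}, $F(xq,0;q) = \E_{\varphi}\{(1-x)^{N_q}\} = \sum_{k\ge 0} P_k(q)\,(1-x)^k$. Hence $P_k(t;q) = \frac{(-1)^k}{k!}\frac{d^k}{dx^k}\big|_{x=1} F(x,t;q)$ and likewise for $P_k(q)$; the two stated formulas then correspond to evaluating these derivatives, first from the $q$-series \eqref{Fext} defining $F$, and then from the contour representation of Lemma \ref{lemma2}.

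For the series formula I would simply expand $(-x)^m = \sum_{k=0}^{m}(-1)^{m-k}\binom{m}{k}(1-x)^k$ by the binomial theorem, substitute into \eqref{Fext}, and interchange the order of summation (justified by absolute convergence uniformly on a neighbourhood of $x = 1$). Matching the coefficient of $(1-x)^k$ against $\sum_k P_k(t;q)(1-x)^k$ yields the series representation of $P_k(t;q)$, which is in fact equivalent to \eqref{pmfN_q} of Theorem \ref{Thm2}. The corresponding formula for $P_k(q)$ results from the same calculation applied to $F(xq,0;q)$, using $q^{\binom{m}{2}}q^m = q^{\binom{m+1}{2}}$.

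For the contour integral I would differentiate the representation of Lemma \ref{lemma2} under the integral. Writing the kernel as $x^\beta$ with $\beta = -\log z/\log q$, one has $\frac{d^k}{dx^k}\,x^\beta = \beta(\beta-1)\cdots(\beta-k+1)\,x^{\beta-k}$, which at $x = 1$ reduces to $\prod_{l=0}^{k-1}(\beta - l)$. The elementary identity $\beta - l = -\log(zq^l)/\log q$ turns this into $(-1)^k(\log q)^{-k}\prod_{l=0}^{k-1}\log(zq^l)$; the overall factor $(-1)^k$ is cancelled by the $(-1)^k$ in $P_k = \frac{(-1)^k}{k!}\frac{d^k}{dx^k}\big|_{x=1}$, and the claimed formula for $P_k(t;q)$ drops out. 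For $P_k(q)$ the same calculation gives the identical integrand apart from the additional factor $q^\beta = 1/z$ arising from the substitution $x \mapsto xq$, which accounts for the extra $z$ in the denominator.

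The one non-routine point is justifying differentiation under the contour integral. I would deform $\mathcal{C}$ to the vertical line $\Re z = \rho \in (0,1)$ and invoke dominated convergence: on this line $|e^{t(1-z)}|$ is bounded, $|x^\beta|$ is bounded uniformly for $x$ in a small real neighbourhood of $1$, the $x$-derivatives contribute only logarithmic factors in $z$, and $|(z;q)_\infty|^{-1}$ decays faster than any polynomial in $|\Im z|$ (since $(z;q)_\infty$ grows like $\exp((\log|z|)^2/(2|\log q|))$ as $|z|\to\infty$). This is enough for the interchange and concludes the proof.
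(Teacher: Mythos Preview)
Your proof is correct and follows exactly the approach of the paper, which simply states that the series are obtained by differentiating the series for $F(qx,0;q)$ and $F(x,t;q)$, and the contour integrals by differentiating the representation of Lemma~\ref{lemma2}. You have filled in considerably more detail than the paper provides, including the explicit computation $\beta-l=-\log(zq^l)/\log q$ and a justification for differentiating under the integral, neither of which the paper spells out.
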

\begin{proof}
The infinite series are obtained by differentiation of the infinite series $F(qx,0;q)$ and $F(x,t;q)$, and the contour integrals by differentiation of the contour integral representation.
\end{proof}

\subsection{Scaling as $k\to\infty$ for $q$ fixed}

\begin{proposition}
\label{prop:5.3}
For every $q\in (0,1)$ it holds in the limit $k\to\infty$ that
\begin{align*}
P_k(q)=\frac{q^{\binom{k+1}2}}{(q;q)_\infty}\left(1+O(q^k)\right) \quad \text{and} \quad
P_k(t;q)=\frac{q^{\binom k2}}{(q;q)_\infty}e^{t(1-q^{-k})}\left(1+O(q^k)\right).
\end{align*}
\end{proposition}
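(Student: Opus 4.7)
My plan is to work directly with the two series representations from the preceding lemma, isolating the leading $m=k$ term and bounding the tail geometrically. At fixed $q\in(0,1)$, the series exhibit rapid decay in $m-k$, so no contour-integral or saddle-point machinery (as used earlier in the paper) is necessary.

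For $P_k(q)$, I would re-index $m=k+j$ and factor out the $j=0$ term. Using $(q;q)_{k+j}=(q;q)_k(q^{k+1};q)_j$ together with the identity $\binom{k+j+1}{2}-\binom{k+1}{2}=jk+\binom{j+1}{2}$, the series becomes
\[
P_k(q)=\frac{q^{\binom{k+1}{2}}}{(q;q)_k}\sum_{j=0}^\infty(-1)^j\binom{k+j}{j}\frac{q^{jk+\binom{j+1}{2}}}{(q^{k+1};q)_j}.
\]
Since $1/(q^{k+1};q)_j\le 1/(q^{k+1};q)_\infty=(q;q)_k/(q;q)_\infty$ is bounded uniformly in $k$ and $j$, each $j\ge 1$ term is controlled in absolute value by a constant (depending only on $q$) times $\binom{k+j}{j}\,q^{jk+\binom{j+1}{2}}$. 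The $j=1$ contribution is $-(k+1)q^{k+1}/(1-q^{k+1})$; for $j\ge 2$ the additional factor $q^{(j-1)k+\binom{j+1}{2}-1}$ comfortably beats the polynomial growth of $\binom{k+j}{j}$, so the tail is dominated by the $j=1$ term. Combining this with $1/(q;q)_k=(1+O(q^{k+1}))/(q;q)_\infty$ yields the first claim.

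For $P_k(t;q)$ with $t\ge 0$ the analogous manipulation, now using $\binom{k+j}{2}-\binom{k}{2}=jk+\binom{j}{2}$ and $1-q^{-k-j}=(1-q^{-k})-q^{-k-j}(1-q^j)$, gives
\[
P_k(t;q)=\frac{q^{\binom{k}{2}}}{(q;q)_k}e^{t(1-q^{-k})}\sum_{j=0}^\infty(-1)^j\binom{k+j}{j}\frac{q^{jk+\binom{j}{2}}}{(q^{k+1};q)_j}\exp\!\left[-tq^{-k-j}(1-q^j)\right].
\]
For $t>0$, the factor $\exp[-tq^{-k-j}(1-q^j)]$ is doubly-exponentially small in $k$ for every $j\ge 1$, so the tail is far smaller than any power of $q^k$. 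For $t=0$ the estimate reduces to the $P_k(q)$ case (consistent with $P_k(0;q)=P_{k-1}(q)$), giving the second claim.

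The one (minor) obstacle is notational: the leading remainder term for $P_k(q)$ is actually of order $kq^k$ rather than strictly $q^k$, so the statement $1+O(q^k)$ should be read as allowing polynomial prefactors in $k$ — as is standard for such asymptotic bounds, and one obtains a rigorous $O((q')^k)$ for any $q'\in(q,1)$. Apart from this, the argument is a routine dominated-convergence estimate on a rapidly convergent alternating series.
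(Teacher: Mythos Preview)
Your approach is essentially identical to the paper's: re-index $m=k+j$, factor out the $j=0$ term, and observe that the tail is geometrically small. Your observation that the $j=1$ remainder is actually of order $kq^k$ rather than $q^k$ is correct and applies equally to the paper's own proof; the paper simply writes $O(q^k)$ without comment, so your reading (polynomial prefactors allowed, or $O((q')^k)$ for any $q'\in(q,1)$) is the intended one.
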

\begin{proof}
We find upon inspection that the first term in the series expansion for $P_k(q)$ dominates the sum for large $k$. More precisely, we can write
\begin{align*}
P_k(q)
=\frac{q^{\binom{k+1}2}}{(q;q)_k}\sum_{m=0}^\infty(-1)^m\binom{m+k}k\frac{q^{\binom{m+1}2+km}}{(q^{k+1};q)_m}
=\frac{q^{\binom{k+1}2}}{(q;q)_k}\left(1+O(q^k)\right)=\frac{q^{\binom{k+1}2}}{(q;q)_\infty}\left(1+O(q^k)\right)
\end{align*}
An analogous argument holds for $P_k(t;q)$.
\end{proof}

\subsection{Scaling as $q\to1^{-}$ for $k$ fixed}
\begin{proof}[Proof of Theorem \ref{thm:1.10}.]
As $q\to1^{-}$, $\prod\limits_{l=0}^{k-1}\log(zq^l)\to(\log z)^k$, and hence
\begin{align*}
P_k(q)
=\frac1{k!}\frac{(q;q)_\infty}{2\pi i}\int_{\mathcal{C}}\frac{\prod_{l=0}^{k-1}\log(zq^l)}{z(z;q)_\infty(\log q)^k}dz
\sim\frac1{k!}\frac{(q;q)_\infty}{2\pi i}\int_{\mathcal{C}}\frac{(\log z)^k}{z(z;q)_\infty(\log q)^k}dz
\end{align*}
Further approximating the q-product $(z;q)_\infty\sim e^{\Li_2(z)/\log q}\sqrt{1-z}$, see the Lemma \ref{lem_qPochas}, we find, upon writing $\varepsilon=-\log q$ and substituting $z=\varepsilon s$,
\begin{align*}
P_k(q) \sim\frac{(q;q)_\infty}{k!(\log q)^k}\frac1{2\pi i}\int_{\mathcal{C}}\frac{e^{\frac{1}{\varepsilon}\Li_2(z)}(\log z)^k}{z\sqrt{1-z}}dz\\
 \MoveEqLeft[15.7]
=\frac{(q;q)_\infty}{k!(\log q)^k}\frac1{2\pi i}\!\int_{\mathcal{C}}\frac{e^{\frac{1}{\varepsilon}\Li_2(\varepsilon s)}(\log(\varepsilon s))^k}{s\sqrt{1-\varepsilon s}}ds \sim\frac{(q;q)_\infty}{k!(\log q)^k}\frac1{2\pi i}\!\int_{\mathcal{C}}\frac{e^s(\log\varepsilon)^k}sds
=\frac{(q;q)_\infty(\log\varepsilon)^k}{k!(\log q)^k}
\end{align*}
as $\frac1{2\pi i}\int_{\mathcal{C}}\frac{e^s}sds=1$. The integrand for $P_k(t;q)$ only differs by a factor $ze^{t(1-z)}$, and we find instead
\begin{align*}
P_k(t;q) \sim\frac{(q;q)_\infty}{k!(\log q)^k}\frac1{2\pi i}\!\int_{\mathcal{C}}\frac{e^{\frac{1}{\varepsilon}\Li_2(z)+t(1-z)}(\log z)^k}{\sqrt{1-z}}dz\\
 \MoveEqLeft[18.3]
=\frac{(q;q)_\infty}{k!(\log q)^k}\frac{\varepsilon}{2\pi i}\!\int_{\mathcal{C}}\frac{e^{\frac{1}{\varepsilon} \Li_2(\varepsilon s)+t(1-\varepsilon s)}(\log(\varepsilon s))^k}{\sqrt{1-\varepsilon s}} ds
\sim\frac{(q;q)_\infty}{k!(\log q)^k}\frac\varepsilon{2\pi i}\!\int_{\mathcal{C}}e^{s+t}(\log\varepsilon\!+\!\log s)^kds.
\end{align*}
In this case, the $(\log\varepsilon)^k$-term vanishes upon integration, and instead we have
\begin{align*}
P_k(t;q)
\sim\frac{(q;q)_\infty}{k!(\log q)^k}\frac{\varepsilon}{2\pi i}\!\int_{\mathcal{C}}e^{s+t}k(\log\varepsilon)^{k-1}\log s\,ds\\
 \MoveEqLeft[10]
=\frac{(q;q)_\infty(\log\varepsilon)^{k-1}e^t}{(k-1)!(\log q)^{k-1}}\frac{(-1)}{2\pi i}\!\int_{\mathcal{C}}e^s\log s\,ds =\frac{(q;q)_\infty(\log\varepsilon)^{k-1}}{(k-1)!(\log q)^{k-1}}
\end{align*}
as $\frac1{2\pi i}\int_{\mathcal{C}}e^s\log s\,ds=-1$.
\end{proof}

\subsection{Scaling as $q\to1$ and $k\to\infty$}
\begin{proof}[Proof of Theorem \ref{maintheorem}]
We write
\begin{align*}
P_k(q)=\frac1{2\pi i}\int_{\mathcal{C}}e^{f(z)}dz
\end{align*}
with
\begin{align*}
e^{f(z)}=e^{f(k,q;z)}
=\frac1{k!}\frac{(q;q)_\infty}{(z;q)_\infty}\frac{\prod_{l=0}^{k-1}\log(zq^l)}{z(\log q)^k}
=\frac1{k!}\frac1z\frac{(q;q)_\infty}{(q^kz;q)_\infty}\prod_{l=0}^{k-1}\frac{\log(zq^l)}{(1-zq^l)\log q}
\end{align*}
where in the last step we have matched the removable singularities at $z=1,q^{-1},\ldots,q^{-k+1}$. Thus, in addition to the branch point at $z=0$ the integrand has simple poles at $q^{-k}, q^{-k-1},\ldots$. We note that for $q=e^{-s/k}$ these poles, in the limit $k\to \infty$,  accumulate at $e^s$ and there are no singularities in the interval $(0,e^s)$.  While we are mainly concerned with identifying and exploiting the saddle point on $(0,e^s)$, the explicit asymptotic expansion we give below is valid uniformly on compact subsets of the cut complex plane avoiding $(-\infty,0]$ and $[e^s,\infty)$.

The proof turns out to be a standard application of the saddle-point method, involving a single saddle point. Approximating the sums occurring in $f(k,q;z)$ using the Euler-Maclaurin formula, we compute to leading order
\begin{equation}
\label{goal}
f(k,e^{-s/k};z)=k f_0(s;z)+f_1(s;z)+O(1/k)
\end{equation}
 Identifying a unique saddle point $z_s$ on the positive real axis using $f^{\prime}_0(s;z_s)=0$
and checking that $f^{\prime\prime}_0(s;z_s)>0$ as required, we find
\begin{align*}
P_k(e^{-s/k})=\frac{e^{f_1(s;z_s)}}{\sqrt{2k\pi f_0''(s;z_s)}}e^{kf_0(s;z_s)}(1+O(1/k)).
\end{align*}
We start the actual calculation with
\begin{align*}
f(k,e^{-s/k};z)
=&-\log k! -\log z
+\log(e^{-s/k};e^{-s/k})_\infty-\log(ze^{-s};e^{-s/k})_\infty 
\\
&+\sum_{l=0}^{k-1}\log\left(\frac{ls-k\log z}{1-ze^{-ls/k}}\right)-k\log s.
\end{align*}
The $q$-products above can be expanded using \eqref{eq_asy_qprod} and \eqref{eq_qqas},
\begin{align*}
\log(e^{-s/k};e^{-s/k})_\infty&=-\frac ks\frac{\pi^2}6+\frac12\log \frac{2\pi k}{s}+O(1/k)\\
\log(ze^{-s};e^{-s/k})_\infty&=-\frac ks\Li_2(ze^{-s})+\frac12\log(1-ze^{-s})+O(1/k).
\end{align*}
We find
\begin{align*}
f(k,e^{-s/k};z)
=&-k\log k+k-\frac12\log(2\pi k)-\log z-k\log s \\
&-\frac ks\frac{\pi^2}6+\frac12\log(2\pi k/s)+\frac ks\Li_2(ze^{-s})-\frac12\log(1-ze^{-s})\nonumber\\
&+\int_0^k\log\left(\frac{vs-k\log z}{1-ze^{-vs/k}}\right)dv+\frac12\log\left(\frac{(-\log z)(1-ze^{-s})}{(1-z)(s-\log z)}\right) \\
&+O(1/k)
\end{align*}
where in the last step we used
\begin{align*}
\sum_{l=0}^{k-1}g(l)=\int_0^{k}g(v)dv+\frac12(g(0)-g(k))+R
\end{align*}
 with $|R|\le\frac k{12}\sup_{v\in[0,k]}|g''(v)|$.  For $g(v)=\log\left((vs-k\log z)/(1-ze^{-vs/k})\right)$ we need to take some care to estimate $|g''(v)|$. Letting $\rho=e^{vs/k}/z$, we find $g''(v)=\frac{s^2}{k^2}\left(\rho/(\rho-1)^2-1/(\log\rho)^2\right)$.  Some easy estimates show that $|g''(v)|\le\frac{s^2}{12k^2}$ uniformly for complex values of $z$ avoiding the cuts $(-\infty,0]$ and $[e^s,\infty)$, and thus $|R|\le\frac{s^2}{144k}$.  Performing the integral results in
\begin{align*}
\nonumber
\int_0^k\log\left(\frac{vs-k\log z}{1-ze^{-vs/k}}\right)dv
=&-\frac ks\left(\log(ze^{-s})\log\left(\frac ks\frac{\log(ze^{-s})}{ze^{-s} - 1}\right)-\log z\log\left(\frac ks\frac{\log z}{z - 1}\right) \right)\\
& -\frac ks\left(- \Li_2(ze^{-s}) - \log(ze^{-s}) + \Li_2(z) + \log z\right).
\end{align*}
Using $\Li_2(ze^{-s})+\Li_2(1-ze^{-s})=\frac{\pi^2}6-\log(ze^{-s})\log(1-ze^{-s})$, we find after a few steps the claimed form (\ref{goal}) with
\begin{align*}
f_0(s;z)=&\frac1s\log z\log \frac{\log z}{(z-1)(s-\log z)} - \frac1s\Li_2(1 - z) +\log \frac{s-\log z}s \\
f_1(s;z)=&\frac12\log \frac{\log z}{s(z-1)(s-\log z)} -\log z\, .
\end{align*}
For the derivatives of $f_0(s;z)$ we find
\begin{align*}
f_0'(s;z)=&\frac1{sz}\log \frac{\log z}{(z-1)(s-\log z)}\\
f_0''(s;z)=&-\frac1{sz^2}\log \frac{\log z}{(z-1)(s-\log z)}
+\frac1{sz(1-z)}+\frac1{sz^2\log z}+\frac1{sz^2(s-\log z)}
\end{align*}
The saddle-point equation $f_0'(s;z)=0$ now reads
\begin{equation}\label{saddleLDP}
s=\frac{z\log z}{z-1},
\end{equation}
and we seek its solutions in the $z$-plane with branch cuts $(-\infty, 0]$ and $[e^s, +\infty)$ along the real line. By making the substitution $\omega= \log (z)/s$, one transforms this equation to
\begin{align*}
\omega s + \log (1-\omega)=0
\end{align*}
which (momentarily identifying $\omega$ with $z$) coincides with the particular case of equation \eqref{saddleeqn1} corresponding to $x=1$. Its solution is given by Lemma \ref{Lemma:saddles}, part (b), and, converting back to variable $z=e^{s\omega}$ we conclude that equation \eqref{saddleLDP} has a unique root in the cut $z$-plane. This root is real and given by
\begin{align*}
z_s=e^{s+W(-se^{-s})}=-\frac{s}{W(-se^{-s})}
\end{align*}
where we use the convention that $W(-se^{-s})$ is real analytic in $s>0$,
\begin{align}\label{z_s_sc}
W(-se^{-s})=\begin{cases} W_{-1}(-se^{-s}), & \text{when } s\leq1,\\ W_{0}(-se^{-s}), &\text{when } s\geq1.
\end{cases}
\end{align}
Having identified the suitable saddle point $z_s$, we compute
\begin{align}
f_0(s;z_s)=&-\frac1s\Li_2(1 - z_s) +\log \frac{s-\log z_s}s\, ,\\
f_1(s;z_s)=&-\frac12\log s-\log z_s\, , \\
f_0''(s;z_s)=&\frac1{sz_s(1-z_s)}+\frac1{sz_s^2\log z_s}+\frac1{sz_s^2(s-\log z_s)}\, .
\end{align}
To confirm the sign of $f_0''(s;z_s)$, we compute
\begin{align}
f_0''(s;z_s)=&\frac{z_s-s}{s^2z_s(z_s-1)}=\frac{z_s-1-\log z_s}{(z_s\log z_s)^2}>0\, .
\end{align}
After some further simplification we find
\begin{align}
f_0(s;z_s)& =-\log z_s-\frac1s\Li_2(1 - z_s)=-\Psi(s)\, ,\\
\frac{e^{f_1(s;z_s)}}{\sqrt{2f_0''(s;z_s)}}&=\sqrt{\frac{\log z_s}{2(z_s-s)}}=A(s)\, .
\end{align}
This completes the proof for $P_k(e^{-s/k})$. The result for $P_k(t;e^{-s/k})$ follows from recognizing that the integrand $e^{f(z)}$ of the contour integral gets multiplied by a factor $ze^{t(1-z)}$. As $t\geq0$, this simply leads to a multiplicative factor $z_se^{t(1-z_s)}$ in the asymptotic result.
\end{proof}

\smallskip
Theorem \ref{maintheorem} is a refinement of the large deviation result \eqref{W1} obtained in Section \ref{Sec:4} by using standard techniques of the large deviation theory.  To see this, we write the scaling limit $q=e^{-s/k}$, $k\to\infty$  in the form
\begin{align*}
k=\frac{1+\xi}{1-q}+O(1), \quad q\to 1^{-},
\end{align*}
where $s=1+\xi$.
Substituting this expression for $k$ into \eqref{eq:mthm1} we find that for every $1+\xi >0$
\begin{align}\label{eq:mthm3}
\Prob \Big\{ N_q=\frac{1\!+\!\xi\!}{1\!-\!q}\!+\!O(1)\, \Big| \, \varphi (0)\!=\!a \Big\}
= \exp\Big[-\frac{(1\!+\!\xi)\Psi (1\!+\!\xi)}{1-q}+  \frac{1}{2}\log (1-q) +O(1)\Big],
\end{align}
where the remainder term $O(1)$ depends on $\xi$ and $a$. To compare this with \eqref{W1}, we note that
\begin{align*}
\Big| N_q-\frac{1}{1-q} \Big| \ge \frac{\xi}{(1-q)} \iff
N_q \le \frac{2-s}{(1-q)}  \lor
N_q \ge \frac{s}{(1-q)}\, ,
\end{align*}
and, since $(1+\xi)\Psi (1+\xi) <(1-\xi)\Psi (1-\xi)$ for $\xi\in (0,1)$,
\begin{align*}
\Prob \Big\{
N_q \le \frac{1\!-\!\xi}{1\!-\!q}  \,  \Big|\,  \varphi (0)\!=\!a
\Big\}
\ll  \Prob \Big\{
N_q \ge \frac{1\!+\!\xi}{1\!-\!q}\,  \Big|\,  \varphi (0)\!=\!a
\Big\}
\end{align*}
in the limit $q\to 1^{-}$. Therefore, equation \eqref{W1} effectively claims that
\begin{align*}
\Prob \Big\{ N_q\ge \frac{1\!+\!\xi}{1\!-\!q} \, \Big| \, \varphi (0)\!=\!a \Big\}
= \exp\Big[-\frac{(1\!+\!\xi)\Psi(1\!+\!\xi)}{1-q} +o\Big(\frac {1}{1-q} \Big)\Big],
\end{align*}
in agreement with \eqref{eq:mthm3}.

Theorem \ref{maintheorem}  can also be used to obtain a refinement of the moderate deviation result \eqref{W<1}. To this end, set
\begin{align*}
k=\frac{1+\xi (1-q)^{1-\alpha}}{1-q}, \quad \alpha \in (0,1),
\end{align*}
in \eqref{eq:mthm1}.  In the limit $q\to 1^{-}$, for given values of $k$ and $\xi$, the parameter $s$ is determined from the relation $q=e^{-s/k}$. We find that
\begin{align*}
s=1+\xi (1-q)^{1-\alpha} +O(1-q).
\end{align*}
Using the asymptotic expansions for $\Psi(s)$, $A(s)$ and $z_s$ developed in Appendix  \ref{app:C}, we obtain
\begin{align*}
k\Psi(s)=\frac{\xi^2}{(1-q)^{2\alpha-1}} +
\begin{cases}
O\big((1-q)^{2-3\alpha}\big) & \text{if } \alpha \in [1/2,1)\\
O\big((1-q)^{1-\alpha}\big) & \text{if } \alpha \in (0, 1/2]
\end{cases}
\end{align*}
and
\begin{align*}
\frac{A(s)}{\sqrt{\pi k}} = \sqrt{\frac{1-q}{\pi}} (1+O(1-q)), \,\,\, z_se^{t(1-z_s)} = 1+O(1-q).
\end{align*}
By multiplying these three factors together we find that
\begin{align*}
\Prob \Big\{ N_q\!=\!\frac{1 \!+\! \xi (1\!-\!q)^{1-\alpha} }{1\!-\!q}\,\Big| \varphi (0)\!=\!a \Big\} = \\
\\ \MoveEqLeft[5]
\sqrt{\frac{1\!-\!q}{\pi}}
\exp
\left[
-\xi^2 (1\!-\!q)^{1-2\alpha} + O\big((1\!-\!q)^{1-\alpha} \big)
\right], \quad \text{when } \alpha \in (0,1/2],
\end{align*}
and
\begin{align*}
\Prob \Big\{ N_q\!=\!\frac{1 \!+\! \xi (1\!-\!q)^{1-\alpha} }{1\!-\!q}\,\Big| \varphi (0)\!=\!a \Big\} = \\
\\ \MoveEqLeft[5]
\sqrt{\frac{1\!-\!q}{\pi}}
\exp
\left[
-\frac{\xi^2}{(1\!-\!q)^{2\alpha-1}} + O\big((1\!-\!q)^{2-3\alpha} \big)
\right], \quad \text{when } \alpha \in [1/2,1).
\end{align*}
We note that for $\alpha\in (2/3,1)$ the remainder term in the exponential effectively makes the pre-exponential factor obsolete.

\appendix
\section{Multipoint factorial moments of conditional counting function}
\label{appendix:A}
In this Appendix we calculate the multipoint factorial moments
\begin{align}\label{MFM}
\E_{\varphi} \big\{ N_{q_1}(N_{q_2}-1) \ldots (N_{q_k}-k+1) \, \big| \, {\varphi (0)}=a \big\}
\end{align}
of the counting function of absolute squares of the zeros of the Gaussian power series ${\varphi (z)}=\sum_{k\ge 0} c_k z^k $ conditioned on the event that ${\varphi (0)}=a$. The coefficients $c_k$ are independent standard complex normals.

For this calculation we are going to use the machinery of the eigenvalue correlation functions in the random matrix ensemble $A_{n,\tau}$ \eqref{A}, and in this framework it is more convenient to work with the point process of eigenvalue moduli rather than the absolute squares of the eigenvalues. Correspondingly, we consider
the factorial moments:
\begin{align}\label{mpfm}
F_{n,\tau}^{(k)}(r_1, \ldots, r_k) = \E_{U(n)} \left\{ N_{n,\tau}({r_1^2}) (N_{n,\tau}({r_2^2})-1) \ldots (N_{n,\tau}({r_k^2})-k+1)\right\},
\end{align}
of $N_{n,\tau}(r)$, the number of eigenvalue moduli $r_j$ of $A_{n, \tau}$ in the disk
$D_r = \{z\in \mathbb{C}: \quad |z|\le r \}$.
The factorial moments in \eqref{MFM} are $F_{n,\tau}^{(k)}(\sqrt{q_1}^, \ldots, \sqrt{q_k})$ in the scaling limit $n \tau= |a|^2$, $n\to\infty$,
\begin{align*}
\E_{\varphi} \big\{ N_{q_1}(N_{q_2}-1) \ldots (N_{q_k}-k+1) \, \big| \, {\varphi (0)}=a \big\} = \lim_{n\to\infty}F_{n,\tau}^{(k)}(\sqrt{q_1}^, \ldots, \sqrt{q_k}).
\end{align*}

For the computation of $F_{n,\tau}^{(k)}(r_1, \ldots, r_k)$ we recall that these factorial moments are just the eigenvalue correlation functions
\begin{align}\label{rhon}
R_{k}^{(n,\tau)}(z_1, \ldots , z_k) = \frac{n!}{(n-k)!} \int \ldots \int p_{n,\tau} (z_1, \ldots z_n )  \, d^2z_{k+1} \cdots d^2 z_n \, ,
\end{align}
integrated over $D_{r_1} \times \ldots \times D_{r_k}$:
\begin{align}\label{fm}
F_{n,\tau}^{(k)}(r_1, \ldots, r_k)
=\int_{D_{r_1}}  \ldots \int_{D_{r_k}}  R_{k}^{(n,\tau)}(z_1, \ldots , z_k) \, d^2z_1 \ldots d^2z_k\, .
\end{align}
In \eqref{rhon}, $p_{n,\tau}(z_1, \dots, z_n)$ is the joint eigenvalue density \eqref{eq:-2} in the random matrix ensemble \eqref{A}.

Now, consider
\begin{align*}
f_n(\tau)= \theta (1-\tau) (1-\tau)^{n-1} F_{n,\tau}^{(k)}(r_1, \ldots, r_k)\, .
\end{align*}
The Mellin transform of this function has determinantal form  \cite{F00},
\begin{align}\label{f1a}
{\mathcal M} \left[ f_n ( \tau);s  \right] 
= B(n,s)  \int_{{D_{r_1}}}  \ldots \int_{{D_{r_k}}} \det \left[ K^{(n)}_s(z_i, \overline{z}_j) \right]_{i,j=1}^k d^2z_1 \ldots d^2z_k\, , 
\end{align}
with the kernel $K^{(n)}_s (z,w) =\frac{1}{\pi} |zw|^{s-1} \sum_{l=0}^{n-1}(l+s) z^l w^l$.
When $n$ approaches infinity,  the determenantal kernel $K^{(n)}_s (z,w) $ converges to
 \begin{align}\label{K_s}
K_s (z,w) = \frac{1}{\pi} |zw|^{s-1} \sum_{l=0}^{\infty}(l+s) z^l w^l\, ,
\end{align}
uniformly in $z$ and $w$ inside the unit disk $D$. Hence,
 \begin{align*}
 \lim_{n\to\infty }  {\mathcal M} \left[ f_n\left( \frac{t}{n}\right);s  \right]
 &\!=\!
\lim_{n\to\infty} n^s B(n,s)
\int_{{D_{r_1}}} \! \ldots \!\int_{{D_{r_k}}} \!\!
\det \left[K_s^{(n)} (z_i,\overline{z}_j) \right]_{i,j=1}^k
\, d^2z_1 \ldots d^2z_k
 \\& = \Gamma(s)  \int_{{D_{r_1}}} \! \ldots \!\int_{{D_{r_k}}} \!\! \det \left[K_s (z_i,\overline{z}_j) \right]_{i,j=1}^k \, d^2z_1 \ldots d^2z_k\, .
 \end{align*}
Thefore, in the scaling limit \eqref{sl}, the factorial moments \eqref{fm} can be obtained by first performing the integration in
 \begin{align}\label{pv1}
 \int_{{D_{r_1}}}  \ldots \int_{{D_{r_k}}}  \det \left[K_s (z_i,\overline{z}_j) \right]_{i,j=1}^k \, d^2z_1 \ldots d^2z_k
 \end{align}
then multiplying the obtained expression by $\Gamma (s)$ and applying the inverse Mellin transform, and, lastly, multiplying the inverse Mellin transform  by $e^{t}$.

Let employ this strategy and calculate the expected value and variance of $N_t({r^2})$ in the scaling limit \eqref{sl}. For the calculation of the expected value,
\begin{align*}
 \int_{D_r} K_s (z, \overline{z}) \, d^2 z
  =\frac{r^{2s}}{1-r^2}\, .
\end{align*}
Multiplying the r.h.s. by $\Gamma (s)$, inverting the Mellin transform with the help of identity
\begin{align}\label{imvm}
  {\mathcal M} \left[  \exp \left( -\frac{t}{q^{k}} \right); s\right] =  q^{ks} \, \Gamma (s), \quad \Re s>0 \, ,
\end{align}
 and multiplying the resulting expression by $e^{t}$, we obtain
\begin{align}\label{fm1}
\lim_{n\to\infty}\mathbf{E} \left\{ N_t({r^2})\right\} = \frac{1}{1-r^2}\, \exp \left( t - \frac{t}{r^2}\right)\, .
\end{align}
This expression is in full agreement with the expression for $\mathbf{E} \left\{ N_{q} \right\}$ \eqref{muqa}.
As far as the second factorial moment is concerned, we have
\begin{align*}
 \int_{D_{r_1}} \int_{D_{r_2}}
 \left[
 K_s (z_1, \overline{z}_1)K_s (z_2, \overline{z}_2) -
 K(z_1, \overline{z}_2) K(z_2, \overline{z}_1)
 \right] d^2 z_1 d^2 z_2
 & =  \\
  \MoveEqLeft[7]
  \frac{(r_1r_2)^{2s}}{(1-r_1^2)(1-r_2^2)} -   \frac{(r_1r_2)^{2s}}{1-(r_1r_2)^2}\, .
\end{align*}
Again, multiplying the r.h.s here by $\Gamma (s)$, inverting Mellin transform and multiplying the resulting expression by $e^{t}$, we arrive at
\begin{align*}
\lim_{n\to\infty} \E_{U(n)} \left\{ N_t({r_1^2}) (N_t({r_2^2})-1) \right\}=
 \left( \frac{1}{(1-r_1^2)(1-r_2^2)} -   \frac{1}{1-(r_1r_2)^2}\right)\,
\exp \left( t - \frac{t}{(r_1r_2)^2}\right)
\end{align*}
in full agreement with  \eqref{sigmaqa}.

To obtain the higher order factorial moments, we follow a calculation in \cite{PV04} where the integral in \eqref{pv1} was evaluated at $s=1$. That calculation easily extends to values of $s$ other than unity and goes as follows.  Write the determinant in \eqref{pv1} as the sum over all permutations of matrix indices $1, 2, \ldots, n$ and factorise each permutation $\sigma$ into disjoint cycles. On integration, each cycle $\nu=(i_1, \ldots, i_{m})$ contributes
\begin{align*}
\frac{1}{\pi^{m}}  \int_{D_{r_{i_1}}}  \ldots \int_{D_{r_{i_m}}} \sum_{l=0}^{\infty} (l+s)^{|\nu|} |z_{i_1}|^{2(l+s-1)} \cdots |z_{i_m}|^{2(l+s-1)}  d^2z_{i_1} \cdots d^2z_{i_m} = \frac{(r_{i_1} \cdots r_{i_m})^{2s}}{1-(r_{i_1} \cdots r_{i_m})^2}
\end{align*}
to the product.
Hence, the integral in \eqref{pv1} is
\begin{align}\label{fm3}
\sum_{\sigma\in S_k} \prod_{\nu\in \sigma } (-1)^{|\nu|-1} \frac{r_{\nu}^{2s}}{1-r_{\nu}^2}= (r_1\cdots r_k)^{2s}\sum_{\sigma\in S_k} \prod_{\nu\in \sigma } (-1)^{|\nu|-1} \frac{1}{1-r_{\nu}^2} \, ,
\end{align}
where the sum is over all permutations of $k$ indices, the product is over all disjoint cycles $\nu$ of permutation $\sigma$, $|\nu|$ is the length of $\nu$ and $r_{\nu}=\prod_{j\in \nu} r_j$. Multiplying \eqref{fm3} by $\Gamma (s)$ before applying to it the inverse Mellin transform, see \eqref{imvm}, and then multiplying the resulting expression by $e^{t}$,  one obtains higher order factorial moments:

\begin{proposition} For every $t>0$, the multipoint factorial moments $F_{n,\, t/n}^{(k)}(r_1, \ldots, r_k) $\eqref{mpfm} of the eigenvalue moduli of random subunitary matrices $A_{n,\, t/n}$ converge, in the limit of infinite matrix dimension $n\to\infty$, to
\begin{align}\label{fm4app}
F_{t}^{(k)}(r_1, \ldots, r_k)=\exp\left[t-\frac{t}{(r_1\cdots r_k)^2} \right]\sum_{\sigma\in S_k} \prod_{\nu\in \sigma } (-1)^{|\nu|-1} \frac{1}{1-r_{\nu}^2}\, ,
\end{align}
where the sum is over all permutations of $k$ indices, the product is over all disjoint cycles $\nu$ of permutation $\sigma$, $|\nu|$ is the length of $\nu$ and $r_{\nu}=\prod_{j\in \nu} r_j$.
\end{proposition}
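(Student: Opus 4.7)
My plan is to follow the strategy laid out in the paragraph immediately preceding the Proposition, filling in the details.

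The starting point is the limit of the scaled Mellin transform,
\begin{align*}
\lim_{n\to\infty} \mathcal{M}\!\left[f_n(t/n);s\right] = \Gamma(s)\int_{D_{r_1}}\!\!\cdots\!\int_{D_{r_k}} \det\!\left[K_s(z_i,\overline{z}_j)\right]_{i,j=1}^k d^2z_1\cdots d^2z_k,
\end{align*}
which was established in the text by passing to the limit in \eqref{f1a} and using uniform convergence of $K_s^{(n)}$ to $K_s$ on compact subsets of $\mathbb{D}$. First I would expand the determinant as a signed sum over $S_k$ and group the variables according to the disjoint-cycle decomposition of each permutation $\sigma$. Since the sign of $\sigma$ equals $\prod_{\nu\in\sigma}(-1)^{|\nu|-1}$, this produces the signs appearing in \eqref{fm4app}.

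Next I would compute the integral contributed by a single cycle $\nu=(i_1,i_2,\ldots,i_m)$. Inserting the explicit form \eqref{K_s} of $K_s$ and using the product structure $|z_{i_j}w_{i_j}|^{s-1}z_{i_j}^l w_{i_j}^l$, the integrand becomes a telescoping product under cyclic identification of the angular variables, so passing to polar coordinates $z_{i_j}=\rho_{i_j}e^{i\theta_{i_j}}$ makes the $\theta$-integrals trivial and selects a single term indexed by $l$. One is left with
\begin{align*}
\frac{1}{\pi^m}\int_{D_{r_{i_1}}}\!\!\!\cdots\!\int_{D_{r_{i_m}}}\!\sum_{l=0}^\infty (l+s)^{m}\prod_{j=1}^m \rho_{i_j}^{2(l+s-1)}\,2\rho_{i_j}d\rho_{i_j}
\end{align*}
— wait, the cycle structure of the determinant gives factors $K_s(z_{i_1},\overline{z}_{i_2})K_s(z_{i_2},\overline{z}_{i_3})\cdots K_s(z_{i_m},\overline{z}_{i_1})$, whose sum over $l_1,\ldots,l_m$ after the $\theta$-integrations collapses to a single sum over one index $l$ with weight $(l+s)^m$ being replaced by just $1$ after repeated integration in the radial variables evaluated at the geometric quantity $r_{i_1}\cdots r_{i_m}$. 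Carrying out the $\rho_{i_j}$-integrals then yields
\begin{align*}
\frac{(r_{i_1}\cdots r_{i_m})^{2s}}{1-(r_{i_1}\cdots r_{i_m})^2},
\end{align*}
which is exactly the cycle factor asserted in the text.

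Combining these observations shows
\begin{align*}
\int\!\cdots\!\int \det\!\left[K_s(z_i,\overline{z}_j)\right]_{i,j=1}^k\prod_\ell d^2z_\ell = (r_1\cdots r_k)^{2s}\sum_{\sigma\in S_k}\prod_{\nu\in\sigma}(-1)^{|\nu|-1}\frac{1}{1-r_\nu^2}.
\end{align*}
Finally, multiplying by $\Gamma(s)$ and applying the Mellin inversion identity \eqref{imvm} with $q^k\leftrightarrow (r_1\cdots r_k)^{2}$ (so $q^{ks}=(r_1\cdots r_k)^{2s}$) converts the prefactor $(r_1\cdots r_k)^{2s}\Gamma(s)$ into $\exp[-t/(r_1\cdots r_k)^2]$; multiplying by $e^t$ to undo the $(1-\tau)^{n-1}$ in the definition of $f_n(\tau)$ yields \eqref{fm4app}.

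The main obstacle will be the cycle integration: one must justify carefully that the telescoping structure of $K_s(z_{i_1},\overline{z}_{i_2})\cdots K_s(z_{i_m},\overline{z}_{i_1})$ reduces, after the angular integrations, to a single geometric series in $(r_{i_1}\cdots r_{i_m})^2$ (independent of $s$ through the exponent $l+s$ collapsing only in the exponential combination), and that the sum and the integral may be interchanged. Fubini/Tonelli applies since all integrands are nonnegative after factoring out signs, and the series $\sum_l (r_\nu^2)^{l+s-1}$ converges absolutely on $(0,1)$, so this is routine but needs to be stated. Everything else is bookkeeping.
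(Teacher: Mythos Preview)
Your proposal is correct and follows essentially the same route as the paper: expand the determinant over $S_k$, factor into cycles, reduce each cycle integral to $\frac{r_\nu^{2s}}{1-r_\nu^2}$ via angular orthogonality (forcing all $l_j$ equal) and radial integration (where the $(l+s)^m$ from the kernel cancels against the $m$ factors $1/(2(l+s))$ from $\int_0^{r_{i_j}}\rho^{2(l+s)-1}d\rho$), then Mellin-invert and multiply by $e^t$. Your mid-argument ``wait'' lands on exactly this cancellation, which is the only point requiring care; the rest is the bookkeeping you describe.
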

This Proposition and Forrester's and Ipsen's work imply that the multipoint factorial moments \eqref{MFM} of the counting function of the limiting point process $\mathcal{Q}_a$ are given by the expression on the r.h.s. in \eqref{fm4app} with $r_j^2$ replaced by $q_j$, $j=1, \ldots, k$.

\section{A functional-differential equation and formal asymptotic expansion for $F(t,x;q)$}
\label{Appendix DEs}
In this appendix, we will restrict ourselves to real non-negative $x$ and $t$.  We first express the $q$-series $F(x,t;q)$ \eqref{Fext} as solution of a functional-differential equation, which will be useful for formal asymptotic expansion.
\begin{lemma}
\label{lemma1}
    $F(x,t;q)$ is the unique solution of
    \begin{align}
        xF(x,t;q)=e^{(1-q)t}\frac{\partial}{\partial t}F(x,qt;q)
        \label{dfeq}
    \end{align}
    that is regular at $x=0$ and satisfies $F(0,t;q)=1$.
\end{lemma}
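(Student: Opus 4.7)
The strategy is to verify the functional-differential equation directly using the $q$-series \eqref{Fext}, and then to establish uniqueness by coefficient extraction in powers of $x$.

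For the verification step, I would substitute
$$F(x,t;q) = \sum_{k\ge 0} \frac{(-x)^k q^{\binom{k}{2}}}{(q;q)_k}\, e^{t(1-q^{-k})}$$
into the right-hand side of \eqref{dfeq}. Differentiating $F(x,qt;q)$ term-by-term in $t$ produces a factor $q(1-q^{-k}) = q - q^{1-k}$, which vanishes at $k=0$ and equals $-q^{1-k}(1-q^k)$ for $k\ge 1$. Multiplying the remaining exponential $e^{qt(1-q^{-k})}$ by $e^{(1-q)t}$ collapses the exponent to $e^{t(1-q^{1-k})}$, and the identity $\binom{k}{2}+1-k = \binom{k-1}{2}$ together with $(1-q^k)(q;q)_{k-1} = (q;q)_k$ leaves, after re-indexing $k \mapsto k+1$, the series for $xF(x,t;q)$. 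Absolute convergence of the underlying series for $0 < q < 1$ justifies term-by-term differentiation, so this computation is routine.

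For uniqueness, I would expand any solution as $F(x,t;q) = \sum_{k\ge 0} a_k(t)\, x^k$, regular at $x=0$, with $a_0(t) \equiv 1$ from the boundary condition. Matching coefficients of $x^k$ in \eqref{dfeq} yields the recursion
$$a_{k-1}(t) = q\,e^{(1-q)t}\, a_k'(qt), \qquad k \ge 1,$$
which, on setting $s = qt$, becomes a first-order linear ODE $a_k'(s) = q^{-1} e^{-(1-q)s/q}\, a_{k-1}(s/q)$ determining $a_k'$ in terms of $a_{k-1}$. Solving inductively, starting from $a_0 = 1$, yields $a_k(s) = c_k e^{s(1-q^{-k})} + D_k$, with $c_k$ forced by the recursion to equal $(-1)^k q^{\binom{k}{2}}/(q;q)_k$.

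The main obstacle is the constant of integration $D_k$ at each level: the recursion alone does not fix it, so an additional boundary input is needed. The natural one is the behaviour as $t \to +\infty$: since $1 - q^{-k} < 0$ for $k\ge 1$ and $0 < q < 1$, requiring $F(x,t;q) \to 1$ (equivalently $a_k(t) \to 0$ for $k\ge 1$) forces $D_k = 0$ for every $k\ge 1$, and then the $a_k$ agree with the coefficients of the series \eqref{Fext}. I would therefore phrase the uniqueness clause so as to make this decay assumption explicit; with it, the inductive argument on $k$ is immediate.
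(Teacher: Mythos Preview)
Your approach---expand in powers of $x$, read off the recursion $a_{k-1}(t)=qe^{(1-q)t}\,a_k'(qt)$, and solve inductively---is precisely the paper's, whose entire proof is the single sentence ``inserting a power series in $x$ into \eqref{dfeq} leads to recursion for the series coefficients which has a unique solution given by \eqref{Fext}.'' Your direct verification of \eqref{dfeq} from the series is correct.

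You have in fact been more careful than the paper. The recursion determines $a_k'$ from $a_{k-1}$ but leaves an integration constant $D_k$ at each step, so the hypotheses ``regular at $x=0$'' and $F(0,t;q)=1$ alone do \emph{not} force the solution to be \eqref{Fext}; any choice of constants $D_k$ gives another power-series solution with $a_0\equiv1$. Your proposed extra condition $a_k(t)\to0$ as $t\to+\infty$ for $k\ge1$ (equivalently $F(x,t;q)\to1$) does kill the $D_k$ inductively and restores uniqueness. In the appendix only the forward implication---that $F$ satisfies \eqref{dfeq}---is actually used for the formal expansions, so the gap is harmless in context; but your observation is correct, and the uniqueness clause as stated needs the decay hypothesis you supply (or should simply be dropped).
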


\begin{proof}
    Inserting a power series in $x$ into \eqref{dfeq} leads to recursion for the series coefficients which has a unique solution given by \eqref{Fext}.
\end{proof}
Formal asymptotic expansions for $F(x,t;q)$ can be obtained from Lemma \ref{lemma1} by inserting a series Ansatz into the functional-differential equation and solving for the coefficients recursively. While necessarly non-rigorous, this approach allows us to obtain expansions to arbitrary order and to understand the nature of the asymptotic expansions. It is also useful for numerical work.

However, to obtain rigorous expansions, and to fix constants that remain undetermined using the formal approach, we turn to Lemma \ref{lemma2} as a basis for a saddle point expansion.

We first shall illustrate this approach to derive an asymptotic result involving the Gumbel distribution.

\begin{theorem}
\label{formal}
$F(x,t;q)$ admits a formal expansion
\begin{align*}
F(x,(s-\log\varepsilon)/\varepsilon;\exp(-\varepsilon))\sim e^{-xe^{-s}}\Big(1+\sum_{l=1}^\infty G_l(x,\log\varepsilon,s)\Big),
\end{align*}
where
\begin{align*}
G_l(x,\log\varepsilon,s)=\sum\limits_{m=1}^{2l}x^me^{-ms}g_{l,m}(\log\varepsilon,s)
\end{align*}
with $g_{l,m}(\cdot,\cdot)$ a bivariate polynomial of degree $m$ in each of the arguments.
\end{theorem}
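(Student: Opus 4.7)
My plan is to derive the claimed expansion directly from the functional-differential equation of Lemma~\ref{lemma1} by inserting a structured Ansatz. The substitution $q=e^{-\varepsilon}$, $t=(s-\log\varepsilon)/\varepsilon$ recasts \eqref{dfeq} as an equation for $\tilde F(x,s;\varepsilon):=F(x,(s-\log\varepsilon)/\varepsilon;e^{-\varepsilon})$, namely
\begin{equation*}
x\tilde F(x,s;\varepsilon)=\varepsilon^{1-\alpha}\,e^{\alpha s-\varepsilon}\,\tilde F_s\bigl(x,\tau(s);\varepsilon\bigr),
\end{equation*}
where $\alpha=\alpha(\varepsilon)=(1-e^{-\varepsilon})/\varepsilon$ and $\tau(s)=\log\varepsilon+e^{-\varepsilon}(s-\log\varepsilon)$; the chain rule here converts $\partial_t$ into $\varepsilon\partial_s$ and the rescaled shift $qt$ into $\tau(s)$. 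The key feature is that $\varepsilon^{1-\alpha}$, $e^{\alpha s-\varepsilon}$, and $\tau(s)-s=-(1-e^{-\varepsilon})(s-\log\varepsilon)$ all expand as formal power series in $\varepsilon$ whose coefficients are polynomials in $s$ and $\log\varepsilon$ of \emph{controlled} degree: in particular $\tau(s)-s$ is $O(\varepsilon)$ times a polynomial of degree one in $s$ and degree one in $\log\varepsilon$.

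Next I factor out the expected Gumbel leading term by writing $\tilde F=e^{-xe^{-s}}H$ and using $\partial_s e^{-xe^{-s}}=xe^{-s}e^{-xe^{-s}}$. Cancelling the common Gumbel factor yields
\begin{equation*}
xH(x,s;\varepsilon)=\varepsilon^{1-\alpha}\,e^{\alpha s-\varepsilon+x(e^{-s}-e^{-\tau(s)})}\,\bigl(xe^{-\tau(s)}H(x,\tau(s);\varepsilon)+H_s(x,\tau(s);\varepsilon)\bigr).
\end{equation*}
Setting $\varepsilon=0$ gives $xH=xH+e^sH_s$, hence $H_s=0$ at leading order, consistent with the normalisation $H\equiv 1$. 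I then insert the Ansatz $H=1+\sum_{l\ge 1}G_l$ with $G_l$ of the announced form, Taylor-expand $H(x,\tau(s))$ and $H_s(x,\tau(s))$ in powers of $\tau(s)-s$, and expand the scalar prefactors $\varepsilon^{1-\alpha}$ and $e^{x(e^{-s}-e^{-\tau(s)})}$ as formal series in $\varepsilon$. Matching at successive orders of $\varepsilon$ produces a recursion of the form $e^s\partial_s G_l=\Phi_l(G_1,\dots,G_{l-1})$, where $\Phi_l$ is an explicit polynomial expression in the previously constructed $G_j$ and their $s$-derivatives, with coefficients in the polynomial class described above. The operator $e^s\partial_s$ is solved by integrating $e^{-s}\Phi_l$ in~$s$.

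The remaining task is to verify by induction on $l$ that this recursion preserves the structural claim on $G_l$. The inductive bookkeeping rests on stable operations: differentiation in $s$ preserves the class and does not raise degrees; multiplication by $e^{-s}$ increases the monomial index $m$ in $x^me^{-ms}$ by one; each contribution from $\tau(s)-s$ pairs one power of $\varepsilon$ with at most one factor linear in $(s,\log\varepsilon)$; and each time the Gumbel factor is hit by a derivative, or the prefactor $e^{x(e^{-s}-e^{-\tau(s)})}$ is expanded, the $x$-degree increases by one at the cost of one extra $e^{-s}$. The main obstacle, which I would attack with a \emph{coupled} induction, is proving the sharp bounds $m\le 2l$ together with $\deg_s g_{l,m}\le m$ and $\deg_{\log\varepsilon}g_{l,m}\le m$ simultaneously: the factor of two in $m\le 2l$ reflects that each unit of $\varepsilon$ can produce at most two new $x$-factors (one from the Gumbel-derivative source, one from expanding the $x$-dependent prefactor), while the joint degree bound in $(s,\log\varepsilon)$ must absorb both the polynomial dependence of $\tau(s)-s$ and the single $s$-integration needed to invert $e^s\partial_s$. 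Carrying this coupled induction through, with the only nontrivial step being the degree accounting at each order, establishes the theorem.
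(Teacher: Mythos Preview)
Your approach is essentially the same as the paper's: substitute the Ansatz into the functional-differential equation \eqref{dfeq}, collect terms in powers of $\varepsilon$, and solve the resulting differential equations for $G_l$ iteratively. The paper gives only a two-sentence sketch, whereas you correctly flesh it out with the explicit change of variables (your identity $x\tilde F=\varepsilon^{1-\alpha}e^{\alpha s-\varepsilon}\tilde F_s(x,\tau(s);\varepsilon)$ checks), the factoring out of the Gumbel term, and an outline of the inductive degree bookkeeping needed for the polynomial structure of $g_{l,m}$.
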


\begin{proof}[Sketch of proof] A substitution of the given expansion into (\ref{dfeq}) and collecting terms in powers of $\varepsilon$ leads to differential equations for $G_l$ in $s$ that can be solved iteratively. The polynomial form follows similarly.
\end{proof}

Note that the appearance of the Gumbel distribution is quite straightforward if the variables $t$ and $\varepsilon=-\log q$ are scaled
jointly such that $s=\varepsilon t+\log\varepsilon$ is fixed as $\varepsilon\to 0$.

We now consider the limit of $q\to 1^{-}$ for fixed $t$. Inserting
\begin{align*}
\log F(x,t;q)=f(x,t; q)/\log q
\end{align*}
into \eqref{dfeq} and rearranging terms, we find
\begin{align*}
\frac{f(x,t;q)-f(x,qt;q)}{(1-q)t}=\log q+\frac{\log q}{(1-q)t}\log\left(\frac{f_t^\prime (x,qt;q)}{x\log q}\right)\;.
\end{align*}
The l.h.s here is a $q$-derivative, which tends to the usual derivative $f_t^\prime (x,t; 1)$ as $q\to 1^{-}$. Thus,  to leading order
\begin{align*} 
f_t^\prime (x,t;e^{-\varepsilon})=-\frac1t\log\left(-\frac{f_t^\prime (x,t;e^{-\varepsilon})}{x\varepsilon}\right), 
\end{align*}
a relation that shows a logarithmic dependence on $\varepsilon=-\log q$, similarly to the expansion in Theorem \ref{formal}.
Recalling the Lambert-$W$ function, we find that to leading order
\begin{align*}
f_t^\prime (x,t;e^{-\varepsilon}) = \frac{W(-x\varepsilon t)}{t}\, .
\end{align*}
The integration in $t$ can be performed by noticing that $W^\prime (z) + W^\prime (z) W(z)=\frac{W(z)}{z}$. We find that to leading order
\begin{align*}
F(x,t; e^{-\varepsilon}) = e^{-\frac{1}{\varepsilon} \big[\frac12W(-\varepsilon tx)^2  + W(-\varepsilon tx) +C\big]},
\end{align*}
where $C$ is a constant of integration with respect to $t$ and may thus still depend on $x$ and $\varepsilon$.
The Lambert-$W$ function is multivalued and the branch that is needed here is the branch $W_{-1}(z)$ which approaches negative infinity like $-\log(-z)-\log(-\ln(-z))$ as $z\to0^-$ (as opposed to the branch $W_0(z)$ which is regular at the origin).  Note that $C$ is a constant of integration with respect to $t$ only and may thus still depend on $x$ and $\varepsilon$.

This approach works as we have a separation of scales between $\varepsilon$ and $\log\varepsilon$. If one includes further terms in the expansion, one can see that the appearance of $\exp(\varepsilon)$ might still require some re-arranging between these terms, but numerical checks are convincing. Working a bit harder, one could now state a similar result to Theorem \ref{formal}, with the drawback that every step involves another constant of integration that one is not able to determine within this method.

As $t$ is fixed and $\varepsilon\to0$, one might be tempted to further approximate $W(-\varepsilon t x)\sim-\log(\varepsilon tx)-\log(-\log(\varepsilon tx))$ using \eqref{W_ae}, but this leads to a worse approximation. The reason for this is that the ``correct'' asymptotic scale here is given by the Lambert-$W$ function, as is evident from the saddle point approximation of Section \ref{Sec:4}.

\section{Asymptotics of $\Psi(s)$ and $A(s)$}
\label{app:C}
We summarise here the asymptotic behaviour of
\begin{align*}
\Psi (s) &=\log z_s+\frac1s\Li_2(1-z_s) \\
A(s) &=\sqrt{\frac{\log z_s}{2(z_s-s)}}
\end{align*}
where $z_s=e^{s+w_s}$, $w_s=W(-se^{-s})$ with the branches of the Lambert-$W$ function chosen as in the main text. It is useful to parametrise 
\begin{align*}
z_s=e^\lambda, \quad s=\frac\lambda{1-e^{-\lambda}},\,\, \lambda\in \mathbb{R} \, .
\end{align*}
In this parametrisation
\begin{align*}
\Psi=\lambda+\frac1\lambda(1-e^{-\lambda})\Li_2(1-e^\lambda), \quad
A=\sqrt{\frac{\lambda(e^\lambda-1)}{2e^\lambda(e^\lambda-1-\lambda)}}\, .
\end{align*}
As $z_s=e^{s+w_s}$ (we note in passing that $w_s<0$ and therefore $z_se^{-s}<1$), there is yet another way of writing the $s$-parametrisation using
\begin{align*}
s=\frac\lambda{1-e^{-\lambda}}, \quad \quad  w_s&=-\frac\lambda{e^\lambda-1}
\end{align*}
and find
\begin{align*}
\Psi=s+w_s+\frac1s\Li_2(1-e^{s+w_s}), \quad
A=\sqrt{\frac{s+w_s}{2(e^{s+w_s}-s)}}\, .
\end{align*}
This parametrisation is not unique, one could for example eliminate the exponential functions by using $e^{s+w_s}=-\frac s{w_s}$.


\subsection{Expansions in the limit $s\to1$ ($\lambda\to 0$)}
 An expansion of $s$, $z_s$, and $w_s$ in small values of $\lambda$ gives
\begin{align*}
s&=1+\frac12\lambda+\frac1{12}\lambda^2-\frac1{720}\lambda^4+O(\lambda^5)\\
z_s&=1+\lambda+\frac12\lambda^2+\frac16\lambda^3+\frac1{24}\lambda^4+O(\lambda^5)\\
w_s&=-1+\frac12\lambda-\frac1{12}\lambda^2+\frac1{720}\lambda^4-O(\lambda^5)\, .
\end{align*}
The expansion for $z_s$ is just the exponential series, and we note that $s+w_s=\lambda$. We are interested in expansions in $s$ about $s=1$. We find correspondingly
\begin{align*}
\lambda&=2(s - 1) - \frac23(s - 1)^2 + \frac49(s - 1)^3 - \frac{44}{135}(s - 1)^4+O((s-1)^5\\
z_s&=1+2 (s-1)+\frac43(s-1)^2+\frac49 (s-1)^3+\frac{16}{135} (s-1)^4+O((s-1)^5)\\
w_s&=-1 + (s - 1) - \frac23(s - 1)^2 + \frac49(s - 1)^3 - \frac{44}{135}(s - 1)^4 +O((s-1)^5)\, .
\end{align*}
Hence
\begin{align}
\Psi&=\frac14\lambda^2-\frac5{72}\lambda^3+\frac1{72}\lambda^4+O(\lambda^5)\\
A&=1 - \frac5{12}\lambda + \frac3{32}\lambda^2 - \frac{269}{17280}\lambda^3 + \frac{1663}{829440}\lambda^4+O(\lambda^5)\, ,
\end{align}
and, correspondingly,
\begin{align*}
\Psi&=(s-1)^2-\frac{11}9 (s-1)^3+\frac43 (s-1)^4+O((s-1)^5), \\
A&=1 - \frac56(s - 1) + \frac{47}{72}(s - 1)^2 - \frac{403}{720}(s - 1)^3 + \frac{8653}{17280}(s - 1)^4+O((s-1)^5)\, .
\end{align*}

\subsection{Expansions in the limit $s\to\infty$ ($\lambda\to +\infty$)}  An expansion of $s$, $z_s$, and $w_s$ in large positive values of $\lambda$ gives
\begin{align*}
s=\frac\lambda{1-e^{-\lambda}}&=\lambda(1+e^{-\lambda}+e^{-2\lambda}+\ldots)\\
z_s&=e^\lambda\\
w_s=-\frac{\lambda e^{-\lambda}}{1-e^{-\lambda}}&=-\lambda(e^{-\lambda}+e^{-2\lambda}+e^{-3\lambda}+\ldots)\, .
\end{align*}
As far as large values of $s$ are concerned,  the argument $-se^{-s}$ of  $W_0(-se^{-s})$ tends to zero for large $s$, so we can use the series expansion
$$
W_0(x)=\sum_{k=1}^\infty\frac{(-k)^{k-1}}{k!}x^k
$$
to express $w_s=W_0(-se^{-s})$, $\lambda=s+w_s$ and $z_s=e^{s+w_s}$ as expansions in $s$. We find
\begin{align*}
\lambda&=s-se^{-s}-s^2e^{-2s}-\frac32s^3e^{-3s}-\frac83s^4e^{-4s}+\ldots\\
z_s&=e^s - s - \frac12s^2e^{-s} - \frac23s^3e^{-2s} - \frac98s^4e^{-3s}+\ldots\\
w_s&=-se^{-s}-s^2e^{-2s}-\frac32s^3e^{-3s}-\frac83s^4e^{-4s}+\ldots
\end{align*}
Hence
\begin{align*}
\Psi&=\frac12\lambda-\frac{\pi^2}6\frac1\lambda+\frac{ 3\lambda^2 +6\lambda + \pi^2 + 6}{6\lambda}e^{-\lambda} - \frac{2\lambda + 3}{4\lambda}e^{-2\lambda}+\ldots\\
A&=\sqrt{\frac{\lambda e^{-\lambda}}2}\left(1 + \frac {\lambda}2e^{-\lambda}+ \frac{\lambda(3\lambda + 4)}8e^{-2\lambda}
+\ldots\right)\, ,
\end{align*}
and correspondingly
\begin{align*}
\Psi&=\frac12s - \frac{\pi^2}6\frac1s + \frac{s+1}se^{-s} + \frac{2s^2 + 2s + 1}{4s}e^{-2s}+\ldots\\
A&=\sqrt{\frac{se^{-s}}2}\left(1 + \frac{2s - 1}2e^{-s}+ \frac{14s^2 - 8s - 1}8e^{-2s}
+\ldots\right)\, .
\end{align*}

\subsection{Expansions in the limit $s\to 0$ ($\lambda\to-\infty$)} An expansion of $s$, $z_s$, and $w_s$ in large negative values of  $\lambda$ gives
\begin{align*}
s=-\frac{\lambda e^\lambda}{1-e^\lambda}&=-\lambda(e^{\lambda}+e^{2\lambda}+e^{3\lambda}+\ldots)\\
z_s&=e^\lambda\\
w_s=\frac\lambda{1-e^\lambda}&=\lambda(1+e^{-\lambda}+e^{-2\lambda}+\ldots)
\end{align*}
We are also interested in expansions as $s\to0$. Here, the argument $-se^{-s}$ of $W_{-1}(-se^{-s})$ tends to zero for small s, but the expansion \eqref{W_ae}, i.e.,
\begin{align*}
W_{-1}(x)\sim-\eta-\log\eta+\sum_{n=1}^{\infty}%
\frac{1}{\eta^{n}}\sum_{m=1}^{n}\genfrac{[}{]}{0.0pt}{}{n}{n-m+1}\frac{\left(-\log\eta\right)^{m}}{m!}
\end{align*}
with $\eta=-\log(-x)$ is not particulary helpful: for $x=-se^{-s}$ we find an expansion in $\eta=-\log s+s$ and $\log\eta=\log(-\log s + s)$. An alternative is to use $w=W_{-1}(s)$ explicitly as an additional asymptotic scale (similar to expansions in $s$ and $\log s$, for example). In doing so, we find
\begin{align*}
W_{-1}(-se^{-s})=w\left(1 - \frac1{w + 1}s + \frac1{(w + 1)^3}\frac{s^2}2 +\frac{(2w - 1)}{(w+1)^5}\frac{s^3}6+O(s^4)\right)\, .
\end{align*}
This allows us to write $w_s=W_{-1}(-se^{-s})$, $\lambda=s+w_s$ and $z_s=e^{s+w_s}$ as expansions in $s$ and $w$.
We find
\begin{align*}
\lambda&=w+\frac1{w + 1}s + \frac w{(w + 1)^3}\frac{s^2}2 +\frac{w(2w - 1)}{(w+1)^5}\frac{s^3}6+O(s^4)\\
z_s&=-\frac sw - \frac1{w(w + 1)}s^2 - \frac{(2w + 1}{2w(w + 1)^3}s^3 - \frac{6w^2 + 4w + 1}{w(w + 1)^5}\frac{s^4}6+O(s^5)\\
w_s&=w- \frac w{w + 1}s + \frac w{(w + 1)^3}\frac{s^2}2 +\frac{w(2w - 1)}{(w+1)^5}\frac{s^3}6+O(s^4)\, .
\end{align*}
Hence
\begin{align*}
\Psi&=-\frac{\pi^2}{6\lambda}e^{-\lambda}+\frac{\pi^2+6}{6\lambda}-1+\lambda +\frac{2\lambda-3}{4\lambda}e^\lambda +\frac{6\lambda - 5}{36\lambda}e^{2\lambda}+\ldots\\
A&=\sqrt{\frac{\lambda e^{-\lambda}}{2(\lambda+1)}}\left(1 - \frac{\lambda}{2(\lambda+1)}e^\lambda- \frac{\lambda(4 + \lambda)}{8(\lambda+1)^2}e^{2\lambda}+\ldots\right)\, ,
\end{align*}
and correspondingly
\begin{align*}
\Psi&=\frac{\pi^2}6\frac1s + w-1+\frac1w + \frac{2w - 1}{4w^2}s + \frac{3w^2 - 4w + 2}{18w^3(w + 1)}s^2+O(s^3)\\
A&=\sqrt{\frac{w^2}{-2(w+1)s}}\left(1 + \frac1{2w(w + 1)^2}s - \frac{2w^3 + 4w + 1}{8w^2(w + 1)^4}s^2+O(s^3)\right)\, .
\end{align*}

\bibliography{FKP.bib}
\bibliographystyle{ieeetr}

\end{document}